\definecolor{red}{rgb}{1.0,0.0,0.0}
\definecolor{blu}{rgb}{0.0,0.0,1.0}
\definecolor{violet}{rgb}{0.5,0.0,0.4}
\newtheorem{Theorem}{Theorem}[section]
\newtheorem{Definition}[Theorem]{Definition}
\newtheorem{Proposition}[Theorem]{Proposition}
\newtheorem{Lemma}[Theorem]{Lemma}
\newtheorem{Corollary}[Theorem]{Corollary}
\newtheorem{Assumption}[Theorem]{Assumption}
\newtheorem{Remark}[Theorem]{Remark}
\newtheorem{Example}[Theorem]{Example}
\numberwithin{equation}{section}
\newcommand{\myref}[1]{(\ref {#1})}
\def\cali{\mathcal{I}}
\def\R{\mathbb R}
\def\N{\mathbb N}
\def\E{\mathbb E}
\def\cald{{\mathcal D}}
\def\cala{{\mathcal A}}
\def\calg{{\mathcal G}}
\def\call{{\mathcal L}}
\def\caln{{\mathcal N}}
\def\calp{{\mathcal P}}
\def\<{\left\langle }
\def\>{\right\rangle }
\def\Swiech
\def\SWIECH
\newtheoremstyle{mytheorem}
{6pt}
{6pt}
{\itshape}
{-0pt}
{\large \scshape}
{}
{1em}
{}
\newtheoremstyle{myremark}
{6pt}
{10pt}
{\rm}
{-0pt}
{\large \scshape}
{}
{1em}
{}
\def\@endtheorem{{\hfill\hbox{\enspace${ \blacksquare}$}}\endtrivlist\@endpefalse } 
\def\ets{e^{(t-s)A}}
\def\calf{{\mathcal{F}}}
\def\calg{{\mathcal{G}}}
\def\calp{{\mathcal{P}}}
\def\calr{{\mathcal{R}}}
\def\P{\mathfrak{P}}
\def\ets{e^{(t-s)A}}
\def\N{\mathcal{N}}
\def\norm{{\| \kern -.05em | }}
\newcommand{\reals}{{{\rm I} \kern -.15em {\rm R} }}
\newcommand{\nat}{{{\rm I} \kern -.15em {\rm N} }}
\newcommand{\ud}{{d}}
\def\R{\mathbb R}
\def\N{\mathbb N}
\def\E{\mathbb E}
\def\P{\mathbb P}
\def\cala{{\mathcal A}}
\def\cald{{\mathcal D}}
\def\calf{{\mathcal F}}
\def\calg{{\mathcal G}}
\def\calh{{\mathcal H}}
\def\cali{{\mathcal I}}
\def\call{{\mathcal L}}
\def\caln{{\mathcal N}}
\def\calp{{\mathcal P}}
\def\calu{{\mathcal U}}
\def\calo{{\mathcal O}}
\def\to{\rightarrow}
\def\<{\left\langle }
\def\>{\right\rangle }
\begin{document}

\title{Mild solutions of semilinear elliptic equations\\ in Hilbert spaces}
\author{Salvatore Federico\footnote{Dipartimento di Economia Politica e Statistica, Universit\`a degli Studi di Siena, Siena (Italy). Email: \texttt{salvatore.federico@unisi.it}.} \and Fausto Gozzi\footnote{Dipartimento di Economia e Finanza, LUISS University, Rome (Italy). Email: \texttt{fgozzi@luiss.it}. }}

\maketitle

\begin{abstract}
This paper extends the theory of regular solutions ($C^1$ in a suitable sense) for
a class of semilinear elliptic equations in Hilbert spaces.
The notion of regularity is based on the concept of $G$-derivative, which is introduced and discussed. A result of existence and uniqueness of solutions is stated and proved under the assumption that the transition semigroup associated to the linear part of the equation has a smoothing property, that is, it maps continuous functions into $G$-differentiable ones. The validity of this smoothing assumption is fully discussed for the case of the Ornstein-Uhlenbeck transition semigroup and for the case of invertible diffusion coefficient covering cases not previously addressed by the literature.
It is shown that the results apply to Hamilton-Jacobi-Bellman (HJB) equations associated to
infinite horizon optimal stochastic control problems in infinite dimension
and that, in particular, they cover examples of optimal boundary control of the heat equation that were not treatable with the approaches developed in the literature up to now.
\bigskip
\vskip 0.15cm

\textbf{Key words}: Elliptic  equations in infinite dimension, transition semigroups, optimal control of stochastic PDEs, HJB equations.
\vskip 0.15cm

\textbf{AMS classification}: 35R15, 65H15,  70H20.\\
\bigskip\\
\textbf{Acknowledgements.} The authors are very grateful to Mauro Rosestolato and Andrzej Swiech for valuable discussions and suggestions. In particular, they are indebted to Mauro Rosestolato for Remark \ref{rem:est}.

\end{abstract}

\setcounter{tocdepth}{3}

\tableofcontents

\section{Introduction}
\label{sec:2015-05-01:00}
%

Semilinear elliptic equations with infinitely many variables are an important subject due to their application to time homogeneous stochastic optimal control problems and stochastic  games problems
over an  infinite horizon.
The infinite dimensionality of the variables arises in many applied problems,
e.g., when the dynamics of the state variables is driven by a stochastic delay equation or by a stochastic PDE.
In these cases the resulting Hamilton-Jacobi-Bellman (HJB) equation (associated to the control problem)
or Hamilton-Jacobi-Bellman-Isaacs (HJBI) equation (associated to the game)
are elliptic
equations in infinite dimension.

Only few papers were devoted to study such kind of
elliptic equations in the literature, using mainly three approaches, as follows
{(see the forthcoming book \cite{FGSbook} for a survey of the present literature)}.
\begin{itemize}
  \item The viscosity solution approach, introduced first, for the second order infinite dimensional case, in \cite{Lions88,Lions89sto,Lions89jfa} and then developed
      in \cite{Swiech93,Swiech94,ishii93} and, later, for more specific problems
      in \cite{GozziSwiech00,GozziRouySwiech00,GozziSritharanSwiech05,KelomeSwiech03}, among others. On one hand, this approach allows to cover a big variety of elliptic equations in Hilbert spaces, including  fully nonlinear ones; on the other hand, a regularity theory for viscosity solutions is not available in infinite dimension.
      Viscosity solutions have been employed to treat  elliptic elliptic equations only in few papers; in particular, we mention \cite{GozziRouySwiech00}.
  \item The mild solution approach by means of representation of solutions through backward stochastic differential equations (BSDEs).  In infinite dimension it was introduced in \cite{FuhrmanTessitore02-ann} (for the parabolic case)     and in \cite{FuTe-ell} (for the elliptic case). This method is applicable, so far, only to semilinear equations satisfying a \emph{structural condition}
      on the operators involved and allows to find solutions with a $C^1$-type regularity when the data are accordingly regular. Moreover, it is suitable to solve the associated control problems in the HJB case. The required structural condition, in the HJB case, substantially states that the control can act on the system modifying its dynamics at most along the same directions along which the noise acts. This may be a stringent requirement  preventing the use of this method to solve some important applied problems, e.g. boundary control problems
      (with the exception of the boundary noise case, see \cite{DebusscheFuhrmanTessitore07,Masiero2010}).
       \item The mild solution approach by means of fixed point arguments --- the method used here. This method has been introduced first in \cite{DaPrato85,Havarneanu85} and then developed in
          \cite{CannarsaDaPrato91,CannarsaDaPrato92} and in various other papers (see e.g. \cite{Gozzi95,Gozzi96,GozziRouy96,
Cerrai01,Cerrai01-40,GoldysMaslowski99,Gozzi98,Gozzi02,Masiero03,
Masiero05,Masiero07AMO,Masiero07EJP}\footnote{Similar results, but using a different method based on a convex regularization procedure, were obtained in earlier papers
\cite{BarbuDaPrato81,BarbuDaPrato83book,BarbuDaPrato83siam} in the special case of convex data and quadratic Hamiltonian function $F$.}.
Such method, suitable for semilinear equations, consists in proving first smoothing properties of the transition semigroup associated to the linear part of the equation and then  applying  fixed point theorems. In this way, one finds solutions with $C^1$-type regularity properties, which allow, in some cases, to solve the associated control problems.
Within this approach, the elliptic case has been treated  in the papers
\cite{CannarsaDaPrato92,GozziRouy96,Cerrai01-40,
GoldysMaslowski99,Masiero07AMO}.
\end{itemize}

The main purpose of this paper is the develop a
general framework for the application of the mild solution approach
in the elliptic case and to show that such framework
allows:
\begin{itemize}
  \item on one side, to widely extend the applicability of the mild solution approach by carefully fixing and extending the use of $G$-derivatives
      introduced in \cite{FuTe-gen-gra} and developed in \cite{Masiero03, Masiero05};
  \item on the other side, to cover HJB equations arising in control problems,
  like the boundary control ones, which so far cannot be solved by means of other techniques.
\end{itemize}

We now present the equation we deal with and explain briefly the main
ideas.
We consider the following class
of semilinear elliptic equations in a real separable Hilbert space $H$:
\begin{equation}
\label{eq4:HJbasicelliptic}
\lambda v (x)-\frac{1}{2}\;\mbox{\rm Tr}\;[Q(x)D^2v(x)] -\langle Ax+b(x),Dv(x) \rangle  -F(x,v(x),Dv(x))=0,
\; \; \quad  x \in H.
\end{equation}
Here $\lambda>0$,  the operator $A$ is a linear (possibly unbounded) operator on $H$, and the functions $b: H \to H$, $Q: H \to \call^+(H)$ (where $\call^+(H)$ denotes the set of bounded nonnegative linear operators on $H$), and $F: H \times \R\times H \to H$ are measurable.
Such equations includes HJB equations associated to discounted time homogeneous stochastic optimal control problems in $H$ over infinite time horizon (see Section \ref{sec:SOC}); in this case $F$
is called Hamiltonian. Here, our main focus is  on the application to this latter case.
However, the main results are proved in a more general framework that allows to cover also other cases like HJBI equations associated to differential games.

The type of solutions we study here are called {\em mild solutions}, in the sense
that they solve the equation in
the following integral form:
\begin{equation}
\label{eq4:HJbasicellipticmild}
v (x)= \int_0^{+\infty} e^{-\lambda s}P_{s}[F(\cdot,v(\cdot),Dv(\cdot))]ds,
\; \; \quad  x \in H,
\end{equation}
where $(P_{s})_{s \ge 0}$ is the transition semigroup associated
to the linear part of \myref{eq4:HJbasicelliptic}, that is,  to the operator
$$\cala v =\frac{1}{2}\;\mbox{\rm Tr}\;[Q(x)D^2v] +\langle Ax+b(x),Dv \rangle. $$
Note that, in \eqref{eq4:HJbasicellipticmild} only the gradient of $v$ appears.
However, as it usually arises in applications to control problems, the dependence
of the nonlinear part $F$ on the gradient $Dv$ can occur in a special form, that is, through a family of linear, possibly unbounded, operators $G$; this leads to consider a
generalized concept of gradient, which we call $G$-gradient and denote by $D^G$ (see Subsection \ref{SS4:GDER} for details). Hence, we  actually have a nonlinear term in the form
$F(x,v(x),Dv(x))=F_0(x,v(x),D^{G}v(x))$, where $F_0$ is a suitable function.
We prove existence and
uniqueness of solutions to \eqref{eq4:HJbasicellipticmild}, with $F(x,v(x),Dv(x))$ replaced by $F_0(x,v(x),D^{G}v(x))$, by applying a fixed point argument  and
assuming, to let the method work, a suitable smoothing property of the transition semigroup $P_s$.
The good news are that, unlike viscosity solutions, this method provides, by construction, a solution that enjoys the minimal regularity needed to define, in classical sense, the candidate optimal feedback map of the associated stochastic optimal control problem and, unlike the BSDEs approach,
the structural restriction ${\rm Im} \,G \subseteq {\rm Im}\, Q^{1/2}$ is here not required.
On the other hand, the required smoothing property is not trivial to prove and
fails to hold in many cases. However, we show that, using the generalization of $G$-derivative that we introduce here, it is is satisfied in some important classes of
problems --- where instead the structural condition ${\rm Im}\, G \subseteq {\rm Im}\, Q^{1/2}$ is not true.

 Mild solutions are not regular enough to apply of It\^o's formula yet --- hence, to enable to prove a verification theorem showing that the candidate optimal feedback map really provides a solution to the associated control problem. Nevertheless, they represent a first step towards this goal.
Indeed, one can rely on this notion to prove that they are, in fact, \emph{strong solutions} (see \cite{Gozzi95,GozziRouy96}): the latter concept allows to perform the verification issue by approximation. {On the other hand,
at least in the case of control of Ornstein-Uhlenbeck process, already the notion of mild solution suffices to prove such a kind of result, as we will show in a subsequent paper
(see also Remark \ref{rem:OFM} on this issue).}
\medskip

This paper is organized as follows. In Section \ref{sec:pre}, after the setting of notations and spaces, we introduce and study the notion of $G$-derivative for functions between  Banach spaces. This is a kind of generalized Gateaux differential, where only some directions, selected by an operator valued map $G$, are involved. The latter notion was considered and studied in some previous papers. Precisely, it was developed in \cite{FuTe-gen-gra} (see also \cite[Sec.\,4]{Masiero03}
and \cite{Masiero05}) for maps $G$ valued in the space of bounded linear operators. Here we extend  this notion, fixing some features, to the case when the map $G$ is valued in the space of \emph{possibly unbounded} linear operators \footnote{This extension enables to treat a larger variety of cases (in particular the case of boundary  control problems), which require to deal with unbounded control operators when the problem is reformulated in infinite dimension.}. The crucial property that we prove is represented by a ``pointwise" exchange property between $G$-derivative and integration (Proposition \ref{intder}), on which our main result relies.

Section \ref{sec:sem} is the theoretical core of the paper. We set the notion of mild solution motivating it by an informal argument and state our main results  (Theorems \ref{th4:exunHJBgeneralell} and \ref{Th:strongfeller}) on existence and uniqueness of solutions to   the integral equation
\begin{equation}
\label{eq4:HJbasicellipticmildF0}
v (x)= \int_0^{+\infty} P_{s}[F_0(\cdot,v(\cdot),D^Gv(\cdot))]ds,
\; \; \quad  x \in H.
\end{equation}
The results are stated under the aforementioned smoothing assumption: we require that the semigroup $P_s$ maps continuous functions into $G$-differentiable ones.

To show that the smoothing assumption is actually verified in several concrete circumstances, we devote Section \ref{S4:SMOOTHING} to the investigation of reasonable conditions guaranteeing the validity of it. In particular, we focus on the Ornstein-Uhlenbeck case, providing a new result that falls in the previous literature when $G=I$, but extend meaningfully to other important cases when $G\neq I$ and, especially, when $G$ is unbounded. The result is contained in Theorem \ref{th4:regOUG} and extends the known one Theorem \ref{th4:regOU} (contained in \cite{DaPratoZabczyk02}). For completeness, we also report another known result (Theorem \ref{th4:FTBismut}) contained in \cite{FuhrmanTessitore02-sto}, where the smoothing assumption is verified for $G=I$ in the case of smooth data and invertible diffusion coefficient.

Finally, to show the implications of our results, we devote Section \ref{sec:SOC}
to present a stochastic optimal control in the Hilbert space $H$ and show how the associated Hamilton-Jacobi-Bellman (HJB) equation falls, as a special case, in the class of \eqref{eq4:HJbasicelliptic}. Then, a specific example of boundary optimal control, through Neumann type conditions, of a stochastic heat equation is provided. In this example, we discuss the validity of all the assumptions that allow to apply our main result through the use of Corollary \ref{corcor}.  As far as we know, this is the first time that the HJB equation associated to this kind of problem is approached by means of solutions that have more regularity than viscosity solutions.

\section{Preliminaries}\label{sec:pre}
In this section we provide some preliminaries about spaces and notations used in the rest of the paper. Also, we provide the notion of $G$-gradient for functions defined on Banach spaces and some properties of this object.
\subsection{Spaces and notation}
Here we introduce some spaces and notations.
\subsubsection{General notation and terminology}
If $U$ is a Banach space we denote its
norm by $|\cdot|_U$. The weak topology on $U$ is denoted by $\tau^U_w$.
If $U$ is also Hilbert, we denote its inner product
by $\left \langle \cdot,\cdot\right\rangle_U$.
Given $R>0$ and $x_0\in U$, the symbol $B_{U}(x_0,R)$ denotes the closed ball in $U$ centered at $x_0$ of radius $R$.  In all the notations above, we  omit the subscript  if the context is clear.

If a sequence $(x_n)_{n\in\mathbb{N}}\subseteq U$, where $U$ is Banach,  converges to $x\in U$ in the norm (strong) topology we write $x_n\to x$.
 If it converges in the weak topology  we write $x_n\rightharpoonup x$.

If $U$ is a Banach space, we denote by $U^*$ its topological dual, i.e. the space of all continuous
linear functionals defined on $U$.
The operator norm in $U^*$ is denoted by $|\cdot|_{U^*}$.
 The duality with $U$ is denoted by $\< \cdot, \cdot \>_{\<U^*, U\>}$.
 If $U$ is a Hilbert space, unless stated explicitly, we always identify its dual $U^*$ with $U$ through the standard Riesz identification.

%

All the topological spaces are intended endowed with their Borel $\sigma$-algebra. By {measurable} set (function), we always intend {\emph{Borel}} measurable set (function).

\subsubsection{Spaces of linear operators}
If $U,V$ are Banach spaces  with norm $|\cdot|_U$ and $|\cdot|_V$, we denote by
$\mathcal{L}(U,V)$ the set of all bounded (continuous) linear operators
$T :  U \to V$ with norm $|T|_{\mathcal{L}(U,V)}:= \sup_{x \in U, x \ne 0} \frac{|Tx|_V}{|x|_U}$, using for simplicity the notation $\mathcal{L}(U)$ when $U=V$. $\mathcal{L}(U)$ is a Banach algebra with identity element $I_U$ (simply $I$ if unambiguous).

If $U,V$ are Banach spaces, we denote by $\call_u (U,V)$ the space of
 closed densely defined possibly unbounded linear operators
 $T:\mathcal{D}(T)\subseteq U \to V$, where $\cald(T)$ denotes the domain. Clearly, $\call (U,V)\subseteq \call_u(U,V)$. Given  $T\in \call_u (U,V)$,  we  denote its adjoint operator by $T^*:\mathcal{D}(T^*)\subseteq V^* \to U^*$ and its range by $\calr(T)$.

  Let $U$ be a separable Hilbert space. We denote by   $\call_1(U)$ (subset of $\call(U)$) the set of trace class operators, i.e. the operators $T\in\call(U)$ such that, given an orthonormal basis  $\{e_k\}_{k\in\N}$  of $U$, the quantity
 $$|T|_{\call_1({{U}})}\coloneqq\sum_{k=1}^\infty \langle (T^*T)^{1/2} e_k,e_k\rangle_U$$
 is finite.  The latter quantity is independent of the basis chosen and defines a norm making $\call_1(U)$ a Banach space.
  The trace of an operator $T\in\call_1(U)$ is denoted by $\mbox{Tr}[T]$, i.e. $\mbox{Tr}[T]\coloneqq \sum_{k=0}^\infty \langle T e_k,e_k\rangle_U$. The latter quantity is
 is finite and, again,  independent of the basis chosen.
 We denote  by  $\call_1^+(U)$ the subset of $\call_1(U)$ of self-adjoint nonnegative  (trace class) operators on $U$. Note that, if $T\in\call_1^+(U)$, then $\mbox{Tr}[T]=|T|_{\call_1(U)}$.

 If $U,V$ are separable Hilbert spaces, we denote by $\call_2(U,V)$ (subset of $\call(U{{,V}})$) the space of Hilbert-Schmidt operators from $U$ to $V$, that is the spaces of operators such that, given an orthonormal basis  $\{e_k\}_{k\in\N}$  of $U$, the quantity
 $$|T|_{\call_2({{U,V}})}\coloneqq\left(\sum_{k=0}^\infty |Te_k|_V^2\right)^{1/2}$$
 is finite.  The latter quantity is independent of the basis chosen and defines a norm making ${{\call_2(U,V)}}$ a Banach space.  It is actually a Hilbert space with the scalar product
$$
\langle T, S\rangle_{\call_2(U,V)}\coloneqq \sum_{k=0}^\infty \langle Te_k, Se_k\rangle_V,$$
where $\{e_k\}_{k\in\N}$ is any orthonormal basis of  $U$.
We refer to \cite[App.\,A]{DaPratoZabczyk14} for more details on trace class and Hilbert-Schmidt operators.

%

%
%

\subsubsection{Function spaces}
Let $U, V,Z$ be  Banach spaces and $m\geq 0$.
We denote by $B(U,V)$ (respectively,  $B_b(U,V)$) the space of measurable (respectively, measurable and bounded) functions from $U$ into $V$.
The space $B_b({U},V)$ is a Banach space
with the usual norm
\begin{equation}\label{supnorm}
|\varphi |_{0} =  \sup_{x \in {U}}|\varphi (x)|_{V}.
\end{equation}

We denote by  $C({U},V)$ (respectively,
$C_b({U},V)$)  the space of  continuous (respectively,
continuous and bounded) functions from $U$ into $V$.
The space $C_b({U},V)$ is a Banach space
with the norm \eqref{supnorm}.

Given $m\geq 0$,
we define  $B_m(U,V)$ (respectively, $C_m({U},V)$) as the set of all functions $\phi \in {B}({U},V)$ (respectively, $\phi\in C(U,V)$) such that the function
\begin{equation}\label{eqappA:psiphipolgrowth}
\psi(x):= \frac{\phi(x)}{{1+|x|^m}}, \ \ \ x\in U,
\end{equation}
belongs to $B_b(U,V)$ (respectively, ${C}_b({U},V)$). These spaces are made by
functions that have at most polynomial growth of order $m$ and are Banach spaces when they are endowed with the norm
$|\phi|_{B_m({U},V)} :=\sup_{x\in {U}} \frac{|\phi(x)|}{{1+|x|^m}}$
(respectively,
$\ |\phi|_{C_m({U},V)}:=\sup_{x\in {U}} \frac{|\phi(x)|}{{1+|x|^m}}).$
We will write $|\phi|_{B_m}$ (respectively, $|\phi|_{C_m}$) when the spaces are clear from the context.
When $m=0$ the above spaces
reduces to $B_b(U,V)$ and $C_b({U},V)$ and we keep this notation to refer to them.

If  $f:{U} \to V$, the Gateaux (resp., Fr\'echet) derivative of $f$ at the point $x$ is denoted by $\nabla f(x)$ (resp., $D f(x)$).


%
%
%
%
%

We define the space $C^{s}({U},\call(Z,V))$
as the space of maps $f:{U}\rightarrow \call\left(Z,V\right)$ such that, for every $z\in Z$,
$f(\cdot)  z\in C\left({U},V\right)$(\footnote{This property is usually called  strong continuity of $f$.}) and
 the space $C_{m}^{s}({U},\call(Z,V))$
as the space of maps $f:{U}\rightarrow \call\left(Z,V\right)$ such that, for every $z\in Z$,
$f(\cdot)  z\in C_{m}\left({U},V\right)$.
When $m=0$, we write  $C_{b}^{s}\left({U},\call\left(  Z,V\right)  \right)$.
\begin{Proposition}
Let $m\geq 0$ and let $U,V,Z$ be three Banach spaces. The space
 $C_{m}^{s}\left(  {U},\call\left(  Z,V\right)\right)$) is Banach
 when endowed with the norm
\begin{equation}\label{normstrong}
\left| f\right|_{C _{m}^{s}\left({U},\call\left(Z,V\right)\right)}
:=\sup_{x\in {U}}\frac{\left| f\left(  x\right)  \right| _{\call\left(
Z,V\right)}}{{1+|x|^m}}.
\end{equation}
(When it is clear from the context, we simply write
 $\left| f\right|_{C_{m}^{s}}$.)
\end{Proposition}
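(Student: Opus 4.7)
The plan is to proceed by the standard two-step completeness argument: first check that the proposed norm is well-defined and actually a norm on $C_m^s(U,\mathcal{L}(Z,V))$, then show that every Cauchy sequence converges in this norm to an element of the space.

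First, I would verify that $|f|_{C_m^s} < \infty$ whenever $f \in C_m^s(U,\mathcal{L}(Z,V))$. The definition only gives strong continuity with polynomial growth direction-by-direction, so the uniform bound on the operator norm is not tautological. For each $x \in U$, consider the operator $T_x := f(x)/(1+|x|^m) \in \mathcal{L}(Z,V)$. For fixed $z \in Z$, the assumption $f(\cdot)z \in C_m(U,V)$ gives
\[
\sup_{x \in U}|T_x z|_V = \sup_{x \in U}\frac{|f(x)z|_V}{1+|x|^m} = |f(\cdot)z|_{C_m(U,V)} < \infty.
\]
Hence the family $\{T_x\}_{x \in U} \subset \mathcal{L}(Z,V)$ is pointwise bounded, and the Banach--Steinhaus theorem (applicable since $Z$ is Banach) yields $\sup_{x \in U} |T_x|_{\mathcal{L}(Z,V)} < \infty$, which is exactly $|f|_{C_m^s} < \infty$. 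The remaining norm axioms (positive-definiteness, homogeneity, triangle inequality) follow immediately from those of $|\cdot|_{\mathcal{L}(Z,V)}$.

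For completeness, let $(f_n) \subset C_m^s(U,\mathcal{L}(Z,V))$ be Cauchy. For every $x \in U$, the inequality $|f_n(x) - f_k(x)|_{\mathcal{L}(Z,V)} \leq (1+|x|^m)|f_n - f_k|_{C_m^s}$ shows that $(f_n(x))$ is Cauchy in $\mathcal{L}(Z,V)$, so it converges to some $f(x) \in \mathcal{L}(Z,V)$. To see $f \in C_m^s(U,\mathcal{L}(Z,V))$, fix $z \in Z$ and observe that $(f_n(\cdot)z)$ is Cauchy in $C_m(U,V)$ with $|f_n(\cdot)z - f_k(\cdot)z|_{C_m} \leq |z|_Z\, |f_n - f_k|_{C_m^s}$. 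Since $C_m(U,V)$ is Banach (a standard fact for the weighted sup-norm, which can be reduced to completeness of $C_b$ via the bijection $\phi \mapsto \phi/(1+|\cdot|^m)$), the limit exists in $C_m(U,V)$; by pointwise convergence it must coincide with $f(\cdot)z$, so $f(\cdot)z \in C_m(U,V)$.

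Finally, convergence in the norm: given $\eps > 0$, choose $N$ such that $|f_n - f_k|_{C_m^s} < \eps$ for $n,k \geq N$. Then for any $x \in U$, $z \in Z$, and $n,k \geq N$,
\[
\frac{|f_n(x)z - f_k(x)z|_V}{1+|x|^m} \leq \eps\, |z|_Z.
\]
Letting $k \to \infty$ and using continuity of the norm in $V$, then taking the supremum over $z$ in the unit ball of $Z$ followed by the supremum over $x \in U$, gives $|f_n - f|_{C_m^s} \leq \eps$ for all $n \geq N$. The only nontrivial step is the well-definedness via Banach--Steinhaus; the rest is a routine transfer of completeness from $\mathcal{L}(Z,V)$ and $C_m(U,V)$ to the weighted strong space.
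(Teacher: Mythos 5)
Your proof is correct and follows essentially the same route as the paper's: finiteness of the norm via Banach--Steinhaus applied to the pointwise-bounded family $f(x)/(1+|x|^m)$, identification of the candidate limit through the two completeness facts (pointwise in $\call(Z,V)$ and directionwise in $C_m(U,V)$), and the standard let-$k\to\infty$ argument for convergence in the $C_m^s$ norm. You simply spell out the Banach--Steinhaus step in more detail than the paper, which dismisses it as "straightforward."
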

\begin{proof}
First of all, we observe that the right hand side of \eqref{normstrong} is finite due to a straightforward application of the Banach-Steinhaus theorem, so it clearly defines a norm.

Let  $(f_n)_{n\in\N}$ be a Cauchy sequence in ${C _{m}^{s}\left({U},\call\left(Z,V\right)\right)}$.
Then, for each $x\in U$, by completeness of $\call(Z,V)$, $f_n(x)\to f(x)$ in $\call(Z,V)$ for some $f(x)\in \call(Z,V)$. On the other hand, by completeness of $C_m(U,V)$, we also have, for each $z\in Z$, where $f_n(\cdot)z\to f_z(\cdot)$ in $C_m(U,V)$ for some $f_z\in  C_m(U,V)$. By uniqueness of the limit we have $f(x)z=f_z(x)$ for each $z\in Z$ and $x\in U$.  Hence, $f\in C _{m}^{s}\left({U},\call\left(Z,V\right)\right)$. It remains to show that $f_n\to f$ with respect to $\left| f\right|_{C_{m}^{s}\left({U},\call\left(Z,V\right)\right)}$, that is
$$
\sup_{x\in U}\sup_{|z|_Z=1} \frac{|(f_n(x)-f(x))z|}{1+|x|^m}\to 0.
$$
Now, for every $z\in Z$ with $|z|_Z=1$ and $n\in N$, we have
 $$\frac{|(f_n(x)-f(x))z|}{1+|x|^m}=\lim_{k\to \infty} \frac{|(f_n(x)-f_k(x))z|}{1+|x|^m}\leq \limsup_{k\to \infty} |f_n-f_k|_{C_{m}^{s}\left({U},\call\left(Z,V\right)\right)}, \ \ \ \forall x\in U.$$
 We conclude as   $(f_n)_{n\in\N}$ is a Cauchy sequence in ${C_{m}^{s}\left({U},\call\left(Z,V\right)\right)}$.
\end{proof}
\subsubsection{Spaces of stochastic processes}
Let $(\Omega, \mathcal{F}, (\mathcal{F}_t)_{t\geq 0}, \P)$ be a filtered probability space satisfying the usual conditions.
Given $p\geq 1$, $T>0$, and a Banach space $U$, we denote by $\mathcal{H}_\calp^{p,T}(U)$ the set of all (equivalence classes of) progressively measurable processes
$X\colon [0,T] \times \Omega\to U$ such that
\[
|X(\cdot)|_{\mathcal{H}_\calp^{p,T}(U)} := \left ( \sup_{s\in [0,T]} \mathbb{E} |X(s)|_U^p \right )^{1/p} < +\infty.
\]
This is a Banach space with the norm $|\cdot|_{\mathcal{H}_\calp^{p,T}(U)}$.
Next, we denote by $\mathcal{H}_\calp^{p,loc}(U)$ the space of all (equivalences classes of) progressively measurable processes
$X\colon [0,+\infty) \times \Omega\to E$ such that
$X|_{[0,T]\times\Omega}\in \mathcal{H}_\calp^{p,T}(U)
$
for every $T>0$.

Given $p\geq 1$, $T>0$, and a Banach space $U$, we denote by
$\mathcal{K}^{p,T}_{\mathcal{P}}(U)$ the space
 of all (equivalence classes of) progressively measurable processes
$X\colon [0,T] \times \Omega\to U$ admitting a version with continuous trajectories and  such that
\[
|X(\cdot)|_{\mathcal{K}_\calp^{p,T}(U)} := \E \left[\sup_{s\in [0,T]} |X(s)|_U^p \right ]^{1/p} < +\infty.
\]
This is a Banach space with the norm $|\cdot|_{\mathcal{K}_\calp^{p,T}(U)}$.
We denote by $\mathcal{K}_\calp^{p,loc}(U)$ the set of all (equivalences classes of) progressively measurable processes
$X\colon [0,+\infty) \times \Omega\to U$ such that
$X|_{[0,T]\times\Omega}\in \mathcal{K}_\calp^{p,T}(U)
$
for every $T>0$.

\subsection{Bochner integration}
Let $I\subseteq\R$, let $V$ be a Banach space, and let $f:I\to V$ be measurable.
We recall that, if $|f|_V\in L^1(I,\R)$, then $f$ is Bochner integrable, and we write $f\in L^1(I,V)$. Moreover, in this case
\begin{equation}\label{ppppk}
\bigg\langle v^*,\  \int_I f(t)dt\bigg\rangle_{V^*,V} =\ \int_I\langle v^*,f(t)\rangle_{V^*,V}dt, \ \ \ \forall v^*\in V.
\end{equation}
Finally,  we recall that, if $V$ is separable, by Pettis measurability Theorem \cite[Th.\,1.1]{Pettis38}, $f$ is measurable if and only if  $t\mapsto \langle v^*, f(t)\rangle_{V^*,V}$ is measurable for every $v^*\in V^*$.

\subsection{$G$-derivative}
\label{SS4:GDER}
Here we set and investigate the notion of
$G$-derivative for functions $f:{U}\rightarrow V$, where  $U,V$ are Banach spaces. The latter notion was defined in \cite{FuTe-gen-gra} (see also \cite[Sec.\,4]{Masiero03}
and \cite{Masiero05}) when $G$ is a map $G:{U}\to \call(Z,U)$ with $Z$ Banach space. Here we extend the definition requiring only that $G:{U}\to \call_u(Z,U)$.

\begin{Definition}
\label{df4:Gderunbounded}
Let $U$, $V$, $Z$ be three Banach spaces and let $f:{U} \rightarrow V$,
 $G: U\to \call_u (Z,U)$.
\begin{itemize}
\item[(i)]
The $G$-directional derivative $\nabla^{G}f\left(x;z\right)$
at a point $x\in {U} $ along  the direction
$z\in \cald(G(x))\subseteq Z$ is defined as
\begin{equation}
\nabla^{G}f\left(  x;z\right) : =\lim_{s\rightarrow 0}\frac{f\left(  x+sG\left(
x\right)  z\right)  -f\left(  x\right)  }{s}, \ \ \ \ s\in\mathbb{R},
\label{eq4:Gder}
\end{equation}
 assuming that this limit exists in $V$.

  \item[(ii)] We say that $f$ is $G$-Gateaux differentiable
at a point $x\in {U}$ if it admits the $G$-directional derivative along every
direction $z\in \cald(G(x))$ and there exists a {\em bounded linear operator}  $\nabla^{G}f\left(  x\right)
\in \call\left(  Z,V\right)$ such that
$\nabla^{G}f\left(  x;z\right)  =\nabla^{G}f\left(  x\right)z$
for every $x\in U$ and every $z \in \cald(G(x))$.
In this case $\nabla^{G}f\left(  x\right)$ is
called the $G$-Gateaux derivative at $x$.

\item[(iii)] We say that $f$ is $G$-Gateaux
differentiable on ${U}$ if it is $G$-Gateaux differentiable at every point $x\in
{U}$ and call the object $\nabla^Gf:{U}\to \call(Z,V)$ the   $G$-Gateaux derivative of $f$.
  \item[(iv)] We say that $f$ is $G$-Fr\'echet differentiable
(or simply $G$-differentiable)
at a point $x\in {U}$ if it is $G$-Gateaux differentiable at $x$ and if the limit
in (\ref{eq4:Gder}) is uniform for $z\in B_Z(0,1)\cap \cald (G(x))$. The latter is equivalent to
$$
\lim_{z\in\cald(G(x)),\ |z|_U\to 0} \frac{|f(x+G(x)z)-f(x)-\nabla^Gf(x){{z}}|_V}{|z|_U}=0.
$$
 In this case,
we denote $D^G f(x):= \nabla^Gf(x)$ and call $D^Gf{{(x)}}$ the $G$-Fr\'echet derivative (or simply the $G$-derivative) of $f$ at $x$.
\item[(v)]
We say that $f$ is $G$-Fr\'echet differentiable on $U$ (or simply $G$-differentiable) if it is $G$-Fr\'echet differentiable at every point $x \in {U}$ and call the object $D^Gf:{U}\to \call(Z,V)$ the   $G$-Fr\'echet derivative  (or simply the $G$-derivative) of $f$.
\end{itemize}
\end{Definition}
Note that, in the definition of the $G$-derivative, one considers only the directions
in $U$ selected by the image of $G(x)$.
This is similar to what is often done in
the theory of abstract Wiener spaces, considering the $K$-derivative, where $K$ is a subspace of $U$,
see e.g. \cite{Gross67}.
Similar concepts are also used in \cite{DaPrato85}, \cite{Priola99} and in
 \cite[Sec.3.3.1]{DaPratoZabczyk02}.
A generalized notion of  $G$-derivative in spaces $L^p(H, \mu)$ where $H$ separable Hilbert space and $\mu$ is a suitable Radon measure,
is considered in relation to Dirichlet forms, see e.g. \cite{MaRockner92}
(or also \cite[Ch.\,3]{DaPrato14}, where it is called Malliavin derivative). In all these references $G$ is a bounded operator.

\medskip

Clearly, if $f$ is Gateaux (resp., Fr\'echet) differentiable  at $x\in {U}$, and  $G:{U}\to \call (Z,U)$, it turns out that $f$ is $G$-Gateaux (resp., Fr\'echet) differentiable at $x$
and
\begin{equation}\label{eq4:DGDfG}
    \nabla^{G}f\left(x\right)z  =\nabla f\left(  x\right)(G\left(  x\right)z)
\qquad \left(\mbox{resp., } D^{G}f\left(x\right)z  =D f\left(  x\right)(G\left(  x\right)z)
 \right),
\end{equation}
i.e. the $G$-directional derivative in direction $h\in Z$ is just the usual
directional derivative at a point $x\in U$ in the direction
$G\left(  x\right)  z\in U$.

If $V=\R$, then the (G\^ateaux or Fr\'echet)   $G$-derivative takes values in
$\call(Z,\R)=Z^*$. Hence,
if $Z$ is a Hilbert space,  identifying $Z$
with its topological dual $Z^*$,  we consider $\nabla^Gf:{U}\to Z$ and  write
$\<\nabla^G f(  x), z\>_Z$ for $\nabla^{G}f\left(x\right)z$. Similarly we do for the $G$-Fr\'echet
derivative.

In the same spirit, when $V=\R$ and both $U$ and $Z$ are Hilbert spaces,
we identify the spaces $U$ and $Z$ with their topological  dual spaces $U^*$ and $Z^*$. Hence, whenever $f$
is Gateaux (resp., Fr\'echet) differentiable at $x\in {U}$, the identity
\myref{eq4:DGDfG} becomes
$$
\<\nabla^G f(  x), z\>_Z  =\< \nabla f(  x),G\left(  x\right)  z \>_U
=\<G\left(  x\right)^*\nabla f(  x), z\>_Z,
$$
and similarly for the $G$-Fr\'echet derivatives.

\begin{Example}\label{ex4:Gnondiff}
 The notion of  $G$-derivative
allows  to deal with functions which are not Gateaux differentiable, as shown by the following example.

Let $f:\R^{2}\rightarrow \R$ be defined by $f(x_1,x_2)\coloneqq\left\vert
x_{1}\right\vert x_{2}$. Clearly, $f$ does not admit   directional derivative in the direction $(1,0)$ at the point $(x_1,x_2)=(0,1)$. On the other hand,
if we consider $G:\R^2\to \call(\R,\R^2)\cong \R^2$, defined by $G(\cdot)\equiv G_0$, where  $G_0=(0,1)$, then
 $f$ admits
$G$-Fr\'echet derivative at every $(x_1,x_2) \in \R^2$.
\end{Example}

\begin{Remark}
When $G$ only takes values in $\call_u(U,Z)$, that is when we deal with the possibility that $G(x)$ is unbounded, then
even if $f$ is Fr\'echet differentiable at all points $x\in {U}$,
the $G$-Fr\'echet derivative may not exist in some points.
Indeed, consider the following example.

Let $U, Z$ be  Hilbert spaces, let
$G_0:\cald(G_0)\subseteq Z \to U$ be a closed densely defined unbounded linear operator on $U$, and let $G_0^*:U\to Z$ be its (unbounded) adjoint. Next, let $G:U\to \call_u(Z,U)$ defined by $G(\cdot)\equiv G_0$ and
let $f:U \to \R$ be defined by $f(x)\coloneqq |x|^2$. Clearly, $f$ is Fr\'echet differentiable  at every $x \in U$ and $Df(x)= 2x$. Then,
by definition of $G$-directional derivative, we have
$$\nabla^{G}f\left(  x;z\right)  =\<Df\left(  x\right),G_0z\>_U=2\langle x,G_0z\rangle_U, \ \ \ \forall x\in U, \ \forall z\in \cald (G_0).
$$
On the other hand, if $f$ was $G$-Fr\'echet differentiable at every $x\in U$, we should have $D^G f(x) \in U$ for every $x\in U$,  and therefore
$$
|\nabla^{G}f\left(  x;z\right)|=\left|\<D^G f(x),z\>_Z \right|\le \left|D^G f(x)\right|_Z|z|_Z,\quad \forall z\in \cald(G_0).
$$
It should follow $\cald(G_0^*)=U$, which is not the case if $G_0$ is any genuinely unbounded linear operator.
\end{Remark}

We now define, following \cite{FuhrmanTessitore02-ann}
and \cite{Masiero03,Masiero05}, some relevant classes of spaces of $G$-regular functions.

\begin{Definition}
\label{df4:Gspaces}
Let $U$, $V$, $Z$ be  Banach spaces, let $G:{U}\to\call_u(Z,U)$, and
let $m\ge 0$.
We define the spaces of functions
\begin{equation}
\mathcal{G}_{m}^{1,G}\left(  {U},V\right)  :=\left\{
f\in C_{m}\left({U},V\right):\,\, f \mbox{is} \ G\mbox{-Gateaux differentiable on} \ U \ \mbox{and} \ \nabla^{G}f \in
{C}_{m}^s\left({U},\call(Z,V)\right)\right\}, \nonumber
\label{GGclass limitata}
\end{equation}
\vspace{-15pt}
\begin{equation}
{C}_{m}^{1,G}\left(  U,V\right):=\left\{
f\in C_{m}\left(  U,V\right):\, f \mbox{is} \ G\mbox{-{{Fr\'echet}} differentiable on} \ U \ \mbox{and} \ D^{G}f
\in C_{m}\left(U,\call(Z,V)\right)\right\}\nonumber
\label{CGclass limitata}
\end{equation}
When $m=0$ we use the notation
${\mathcal{G}}_{b}^{1,G}\left({U},V\right)$ and
${C}_{b}^{1,G}\left(  {U},V\right)$. Moreover, when $V=\R$ we omit it in the notation.

\end{Definition}

\begin{Definition}[Pseudoinverse]
\label{df:pseudoinverseapp}
Let $Z$ be a uniformly convex Banach space, let $U$ be a Banach space, and let $T\in \mathcal{L}_u(Z,U)$. The \emph{pseudoinverse} $T^{-1}$ of $T$ is the linear operator defined on $T(Z) \subseteq U$ and valued in $\cald(T)\subseteq Z$ associating to each $x\in \calr(T)$ the (unique) element in $T^{-1}({{\{x\}}})$ with minimum norm  in $Z$. {{\footnote{Existence and uniqueness of such an element follows from the fact that $T$ is a closed operator and applying the results of \cite[Sec.\,II.4.29, p.\,74]{DunfordSchwartz58-I}).}}}
\end{Definition}
\begin{Remark}\label{rem:inverse}
If $Z$ in Definition \ref{df:pseudoinverseapp} is Hilbert space,  then $
\calr(T^{-1})= (\ker T)^\perp.
$
\end{Remark}
Now we deal with the possibility of performing the $G$-differentiation under the integral sign in pointwise and functional sense. Due to the integrability issues clarified at the beginning of Section \ref{sec:sem}, we will make use only of the pointwise exchange property (Proposition \ref{intder}). However, since the analogue functional property has not been well developed in the literature, we establish the result also in this case  (Corollary \ref{cr4:DGclosed}) providing a complete proof.

\begin{Assumption}\label{assG}
$U$ and $Z$ are Hilbert spaces.
The map $G:{U}\to \call_u(Z,U)$ is such that
 \begin{enumerate}[(i)]
 \item $\cald(G(x))=\cald (G(y))$ for every $x,y\in U$; we denote by $\cald_G$ the common domain;
  \item $\calr(G(x))=\calr(G(y))$ for every $x,y\in U$; we denote by $\calr_G$ the common range;
 \item\label{iip3} Let $G^{-1}(x)$ be the pseudo-inverse of $G(x)$ according to Definition \ref{df:pseudoinverseapp}.
 The map $x\mapsto G(x)^{-1}y$ is locally bounded for every $y\in\calr_G$.
 \end{enumerate}
\end{Assumption}
\begin{Proposition}\label{intder}
Let Assumption \ref{assG} hold. Let $m\geq 0$  and let $f:[0,+\infty)\times U\to V$ be  measurable and such that
\begin{enumerate}[(i)]
\item
 $f(t,\cdot)\in \calg_m^{1,G}(U,V)$ (resp., $C_m^{1,G}(U,V)$) for a.e. $t\in[0,+\infty)$;
 \item there exists $g\in L^1([0,+\infty),\R)$ such that, for a.e. $t\in[0,+\infty)$ and every $x\in U$,
 \begin{equation}\label{stima2}
|f(t,x)|_{V}\leq g(t)(1+|x|^m),
\end{equation}
\vspace{-20pt}
\begin{equation}\label{stima1}
|\nabla^Gf(t,x)|_{\call(Z,V)}\leq g(t)(1+|x|^m) \ \ \mbox{(resp., } \ |D^Gf(t,x)|_{\call(Z,V)}\leq g(t)(1+|x|^m)).
\end{equation}
\end{enumerate}
Then,  the function $L:U\to V$,  $L(x)\coloneqq\int_0^{+\infty}f(t,x)\,dt$ is well defined and belongs to $\calg_m^{1,G}(U,V)$ (resp., $C_m^{1,G}(U,V)$) and
\begin{equation}\label{scambio}
\nabla^G L(x)h =  \int_0^{+\infty} \nabla^Gf(t,x)h\,dt, \ \ \ \mbox{(resp., } D^G L(x)h =  \int_0^{+\infty} D^Gf(t,x)h\,dt), \ \ \ \forall x\in U, \ \forall h\in \cald_G.
\end{equation}
\end{Proposition}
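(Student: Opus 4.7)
The plan is to carry out two dominated-convergence arguments: one for the values of $L$ and one for the difference quotients defining its $G$-derivative. Well-definedness and membership in $C_m(U,V)$ are the easy parts: measurability of $t\mapsto f(t,x)$ follows from joint measurability, the bound \eqref{stima2} gives $|f(t,x)|_V \le g(t)(1+|x|^m)$ with $g\in L^1$, so that $L(x)$ exists as a Bochner integral with $|L(x)|_V \le |g|_1(1+|x|^m)$; continuity in $x$ follows from continuity of each $f(t,\cdot)$ via dominated convergence on bounded sequences $x_n\to x$, with dominant $g(t)(1+\sup_n|x_n|^m)$.

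For the $G$-Gateaux case, fix $x\in U$ and $h\in\cald_G$, set $y_\sigma := x+\sigma G(x)h$ for $\sigma\in[0,\bar s]$, and write
\[
\frac{L(x+sG(x)h)-L(x)}{s}\;=\;\int_0^{+\infty}\!\frac{f(t,x+sG(x)h)-f(t,x)}{s}\,dt .
\]
The integrand converges pointwise in $t$ to $\nabla^G f(t,x)h$. The key step is producing an integrable dominant uniform in $s\in(0,\bar s]$. By Assumption \ref{assG}(ii), $G(x)h\in\calr_G=\calr(G(y_\sigma))$, so $G(x)h=G(y_\sigma)z_\sigma$ with $z_\sigma:=G(y_\sigma)^{-1}(G(x)h)\in\cald_G$. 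Hence $\sigma\mapsto f(t,y_\sigma)$ is directionally differentiable at every $\sigma$ with derivative $\nabla^G f(t,y_\sigma)z_\sigma$, bounded by
\[
|\nabla^G f(t,y_\sigma)z_\sigma|_V \;\le\; g(t)(1+|y_\sigma|^m)\,|z_\sigma|_Z \;\le\; C(x,h,\bar s)\,g(t),
\]
where uniformity of $|z_\sigma|_Z$ in $\sigma\in[0,\bar s]$ uses Assumption \ref{assG}(iii). Since Banach-valued functions need not satisfy a pointwise mean-value theorem, I apply the scalar MVT to $\sigma\mapsto \langle v^*,f(t,y_\sigma)\rangle_{\langle V^*,V\rangle}$ for each $v^*\in V^*$ and take the supremum over $|v^*|_{V^*}\le 1$ to obtain the desired dominant $|(f(t,x+sG(x)h)-f(t,x))/s|_V\le C(x,h,\bar s)g(t)$ for $s\in(0,\bar s]$. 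Dominated convergence then yields \eqref{scambio} with $\nabla^G$; the operator $h\mapsto\int_0^{+\infty}\nabla^G f(t,x)h\,dt$ is bounded linear with norm at most $|g|_1(1+|x|^m)$, and strong continuity $\nabla^G L\in C_m^s(U,\call(Z,V))$ follows from a further dominated-convergence argument using \eqref{stima1}.

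For the Fr\'echet version, continuity in operator norm $D^G L\in C_m(U,\call(Z,V))$ follows from DCT applied to $|D^G f(t,x_n)-D^G f(t,x)|_{\call(Z,V)}$, which converges pointwise to zero by hypothesis and is dominated by $2g(t)(1+\sup_n|x_n|^m)$. The Fr\'echet remainder estimate (uniformity in $h\in B_Z(0,1)\cap\cald_G$) is handled via
\[
\omega_t(r):=\sup\{|f(t,x+G(x)z)-f(t,x)-D^G f(t,x)z|_V\,:\,z\in\cald_G,\;|z|_Z\le r\}\,,
\]
noting that $\omega_t(r)/r\to 0$ as $r\to 0^+$ pointwise in $t$ (by the $C_m^{1,G}$ hypothesis) while $\omega_t(r)/r\le C(x)g(t)$ for $r\le 1$ (combining the paragraph-two estimate with \eqref{stima1}), so that DCT closes the argument. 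I expect the dominating estimate in paragraph two to be the main obstacle: because $G$ is possibly unbounded, the ``natural'' derivative of $\sigma\mapsto f(t,y_\sigma)$ is not directly a $G$-derivative at $y_\sigma$ and cannot be bounded using \eqref{stima1} alone; one must re-express $G(x)h$ as $G(y_\sigma)z_\sigma$ via equal ranges and then invoke local boundedness of the pseudoinverse, i.e.\ precisely Assumption \ref{assG}(ii)--(iii).
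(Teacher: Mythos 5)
Your proof follows essentially the same route as the paper's: you rewrite $G(x)h$ as $G(y_\sigma)\,G(y_\sigma)^{-1}G(x)h$ using the equal-range and locally-bounded-pseudoinverse parts of Assumption \ref{assG}, differentiate $\sigma\mapsto f(t,y_\sigma)$ to get $\nabla^G f(t,y_\sigma)z_\sigma$, bound the difference quotients by $C\,g(t)$ via a vector-valued mean value inequality (the paper cites Zeidler's Lagrange theorem for $V$-valued functions where you rederive it by testing against $v^*\in V^*$ and taking suprema), and close with dominated convergence, exactly as the paper does. The only substantive difference is that you make the Fr\'echet-remainder uniformity explicit through the modulus $\omega_t$, a point the paper dispatches by simply saying ``replace $\nabla^G$ by $D^G$.''
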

\begin{proof}
First of all, note that the fact that $L$ is well defined and belongs to $C_m(U,V)$ follows from (i) and \eqref{stima2} by dominated convergence.

We prove now the other claims for the $G$-Gateaux gradient; the proof for the $G$-Fr\'echet gradient is obtained just replacing $\nabla^G$ by $D^G$ and $C_m^s(U,\call(Z,V))$ by $C_m(U,\call(Z,V))$ in the following.
Let $x\in U$, $h\in \cald_G$. Set
$$
  y(r)\coloneqq x+rG(x)h, \ \ \  \varphi(t,r)\coloneqq f(t,y(r)), \ \  t\geq 0, \  r\in\R.
$$
Then, using Assumption \ref{assG}(i)-(ii), we can write
\begin{align*}
\frac{\partial}{\partial r}\varphi(t,r)&=\lim_{\xi\to 0}\frac{f(t,y(r)+\xi G(x)h)-f(t,y(r))}{\xi}\\
&=\lim_{\xi\to 0}  \frac{f(t,y(r)+\xi G(y(r))G(y(r))^{-1}G(x)h)-f(t,y(r))}{\xi}\\
&=\nabla^G f(t,y(r))(G(y(r))^{-1}G(x)h).
\end{align*}
By Assumption \ref{assG}(\ref{iip3}) and \eqref{stima1}, choosing $\varepsilon>0$ sufficiently small, we get
\begin{equation}
\left|\frac{\partial}{\partial r} \varphi(t,r)\right|\leq g_x(t), \ \ \ \forall r\in(-\varepsilon,\varepsilon).
\end{equation}
Then,  arguing as in the standard one dimensional  case, dominated convergence combined with a generalization of Lagrange Theorem to $V$-valued functions (see \cite[Prop.\,3.5,\,p.\,76]{Zeilder86-I}) yields
$$\nabla^G L(x;h) =  \int_0^{+\infty} \nabla^Gf(t,x)h\,dt.$$
This also shows that
$$|\nabla^G L(x;h)|_V\leq \left(\int_0^{+\infty} |\nabla^Gf(t,x)|_{\call(Z,V)}\,dt\right) |h|\leq \left(\int_0^{+\infty} |g_x(t)|\,dt\right) |h|, \ \ \ \forall x\in H, \forall h\in \cald_G.$$
Hence, as $g_x\in L^1([0,+\infty),\R)$,
we conclude that $L$ is $G$-Gateaux differentiable on $U$ and \eqref{scambio} holds true.
 The fact that $\nabla^GL\in C_m^s(U,\call (Z,V))$, hence $L\in \calg^{1,G}_m(U,V)$,  comes from \eqref{scambio} by dominated convergence.
\end{proof}
\begin{Assumption}\label{assGbis}  Assumption \ref{assG} holds true with \emph{locally bounded} replaced by \emph{continuous} in (iii).
 \end{Assumption}
\begin{Proposition}
\label{lm4:GBanach}
Let Assumption \ref{assGbis} hold.
The spaces of Definition \ref {df4:Gspaces} are Banach when
endowed with the norm
\begin{equation}
\left| f\right| _{{\calg}_{m}^{1,G}({U},V)}:=
\sup_{x\in {U}}\frac{\left| f(x)\right|_{V}}
{{1+|x|^m}}
+\sup_{x\in {U}}\frac{\left| \nabla^{G}f(x)\right| _{\call\left(  Z,V\right)  }}
{{1+|x|^m}}. \label{gnorm}
\end{equation}
(For elements of
$C_{m}^{1,G}\left({U},V\right)$
 we use the notation $\left| f\right|_{C_{m}^{1,G}\left({U},V\right)}$. If clear from the context, we simply write
$\left| f\right|_{\calg_{m}^{1,G}}$ and $\left| f\right| _{C_{m}^{1,G}}$.)
\end{Proposition}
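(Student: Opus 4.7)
The plan is to reduce the proof to showing that a Cauchy sequence converges in the given norm, which decomposes as $|f|_{\mathcal{G}_m^{1,G}} = |f|_{C_m} + |\nabla^G f|_{C_m^s}$ (and analogously with $D^G$ and $C_m$ in place of $\nabla^G$ and $C_m^s$ for $C_m^{1,G}$). Since $C_m(U,V)$ is a Banach space and $C_m^s(U,\call(Z,V))$ (resp.\ $C_m(U,\call(Z,V))$) is Banach by the earlier proposition on strongly continuous maps, a Cauchy sequence $(f_n)$ yields $f_n\to f$ in $C_m(U,V)$ and $\nabla^G f_n\to g$ in $C_m^s(U,\call(Z,V))$ (resp.\ in $C_m(U,\call(Z,V))$). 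The core task is then to identify $g$ with $\nabla^G f$ (resp.\ $D^G f$).

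To perform this identification I would mimic the fundamental-theorem-of-calculus argument used in the proof of Proposition \ref{intder}. Fix $x\in U$ and $h\in\cald_G$ and set $y(r):=x+rG(x)h$. Using Assumption \ref{assGbis}(i)-(ii), the same computation as in Proposition \ref{intder} gives $\frac{d}{dr}f_n(y(r))=\nabla^G f_n(y(r))(G(y(r))^{-1}G(x)h)$, whence the Lagrange-type identity
\begin{equation*}
f_n(y(s))-f_n(x)=\int_0^s\nabla^G f_n(y(r))\bigl(G(y(r))^{-1}G(x)h\bigr)\,dr.
\end{equation*}
I would then let $n\to\infty$: the left-hand side tends to $f(y(s))-f(x)$ by $C_m$-convergence, while on the right-hand side I would combine uniform-on-compacts convergence of $\nabla^G f_n(\cdot)h'\to g(\cdot)h'$ with continuity (hence local boundedness) of $r\mapsto G(y(r))^{-1}G(x)h$ from Assumption \ref{assGbis}(iii) to pass to the limit via dominated convergence, obtaining
\begin{equation*}
f(y(s))-f(x)=\int_0^s g(y(r))\bigl(G(y(r))^{-1}G(x)h\bigr)\,dr.
\end{equation*}
Dividing by $s$ and letting $s\to 0^+$, continuity of $g$ and of $r\mapsto G(y(r))^{-1}G(x)h$ at $r=0$ yield $\nabla^G f(x;h)=g(x)(G(x)^{-1}G(x)h)$. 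Since every $\nabla^G f_n(x)$ kills $\ker G(x)$ by definition of the $G$-directional derivative, the same holds for $g(x)$ in the strong-operator limit, and Remark \ref{rem:inverse} (identifying $G(x)^{-1}G(x)$ with the orthogonal projection onto $(\ker G(x))^\perp$) gives $g(x)(G(x)^{-1}G(x)h)=g(x)h$. Hence $\nabla^G f(x)=g(x)$ on $\cald_G$, concluding the Gateaux case.

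For the $C_m^{1,G}$ version the upgrade is that $g$ converges in the stronger norm of $C_m(U,\call(Z,V))$, so $w\mapsto g(w)$ is norm-continuous. Starting from the integral identity with $z\in\cald_G$ in place of $sh$,
\begin{equation*}
f(x+G(x)z)-f(x)-g(x)z=\int_0^1\bigl[g(y(r))(G(y(r))^{-1}G(x)z)-g(x)(G(x)^{-1}G(x)z)\bigr]\,dr,
\end{equation*}
I would exploit norm-continuity of $g$ together with Assumption \ref{assGbis}(iii) to show the integrand is $o(|z|_Z)$ uniformly in $r\in[0,1]$, giving the uniform remainder estimate that characterises $G$-Fr\'echet differentiability. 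The principal technical obstacle is precisely this limit passage under the integral with the pseudoinverse correction $G(y(r))^{-1}G(x)h$: one needs the (locally uniform) control of this term afforded by the \emph{continuity} strengthening in Assumption \ref{assGbis}, not merely the local boundedness of Assumption \ref{assG}, which is why the present statement requires Assumption \ref{assGbis} rather than Assumption \ref{assG}.
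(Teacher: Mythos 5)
Your proposal is correct and follows essentially the same route as the paper's proof: decompose the norm, use completeness of $C_m(U,V)$ and $C_m^s(U,\call(Z,V))$ to extract limits $f$ and $g$, identify $g$ with $\nabla^G f$ via the Lagrange-type integral identity $f_n(y(s))-f_n(x)=\int_0^s\nabla^G f_n(y(r))(G(y(r))^{-1}G(x)h)\,dr$, pass to the limit in $n$ and then in $s$, and finally remove the $G(x)^{-1}G(x)$ correction by the kernel inclusion $\ker(\nabla^G f_n(x))\supseteq\ker(G(x))$. The paper phrases the last step by setting $k=G(x)^{-1}G(x)h$ and noting $h-k\in\ker G(x)$ rather than invoking the orthogonal projection of Remark \ref{rem:inverse}, but this is the same argument.
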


\begin{proof}
We give the proof for $\calg_m^{1,G}$. The proof
$C_m^{1,G}$ is analogous.

Let $\left(\Phi_{n}\right)_{{n\in\mathbb{N}}}$ be a Cauchy sequence in
$\calg_{m}^{1,G}\left(  {U},V\right)$. In particular,
$\left\{\Phi_{n}\right\}_{{n\in\mathbb{N}}}$ is a Cauchy sequence in
$C_{m}\left({U},V\right)$, so that $\Phi_{n}$ converges to a
function $\Phi\in C_m(U,V)$.

Now, for all $x \in U$,
$\left(\nabla^{G}\Phi_{n}\left(  x\right)\right)_{{n\in\mathbb{N}}}$ is a Cauchy sequence of linear bounded operators in $\call\left(
Z,V\right)$, so that $\nabla^{G}\Phi_{n}\left(  x\right)  $ converges to a
linear bounded operator $A(x)$.
On the other hand, for all $z \in Z$, the sequence
$\left(\nabla^{G}\Phi_{n}(\cdot)z\right)_{{n\in\mathbb{N}}}$ is a Cauchy sequence
in $C_{m}\left({U},V\right)$ so that $\nabla^{G}\Phi_{n}(\cdot)z$ converges to a
function $A_z \in C_{m}\left({U},V\right)$.
Hence, we have $A_z(x)=A(x)z$, which yields $A\in C_m^s(U,\call(Z,V))$.

Now, note that,  by definition of $\nabla^G\Phi_n(x)$, we have $\nabla^G\Phi_n(x)h=0$ whenever $h\in\mbox{ker}(G(x))$. It follows
\begin{equation}\label{kerker}
\mbox{ker}(A(x))\supseteq \mbox{ker}(G(x)), \ \ \forall x\in U.
\end{equation}

We are going prove   that
$\Phi\in \calg_{m}^{1,G}\left(  {U},V\right)$
and $A=\nabla^{G}\Phi$.
Let $x\in U$ and $h\in \cald_G$. Set, for $r\in[-1,1]$,  $y(r)\coloneqq x+rG(x)h$ and
$
\varphi_n(r)\coloneqq \Phi_n(x+rG(x)h).
$
Then,
arguing as in the proof of Proposition \ref{intder}, we get
\begin{eqnarray*}
\varphi'_n(r)=
\nabla^G\Phi_n(y(r))G(y(r))^{-1}G(x)h, \ \  \ r\in[-1,1].
\end{eqnarray*}
As $\nabla^G\Phi_n\in C_m^s(U,\call(Z,V))$, by the Banach-Steinhaus Theorem the family $\{\nabla^G\Phi_n(y(r))\}_{r\in[-1,1]}$ is a family of uniformly bounded operators in $\call(U,V)$. Setting
$$M\coloneqq  \sup_{\alpha\in[-1,1]}|\nabla^G\Phi_n(y(\alpha))|_{\call(Z,V)}$$
  we have, for every $r,s\in[-1,1]$,
\begin{eqnarray*}
|\varphi'(r)-\varphi'(s)|&=&|(\nabla^G\Phi_n(y(r))G(y(r))^{-1}-\nabla^G\Phi_n(y(s))G(y(s))^{-1})G(x)h|_V\\
&&\leq |\nabla^G\Phi_n(y(r))|_{\call(Z,V)} |(G(y(r))^{-1}-G(y(s))^{-1})G(x)h|_Z
\\&&+|(\nabla^G\Phi_n(y(r))-\nabla^G\Phi_n(y(s)))G(y(s))^{-1}G(x)h|_V
\\&&\leq M|(G(y(r))^{-1}-G(y(s))^{-1})G(x)h|_V+|(\nabla^G\Phi_n(y(r))-\nabla^G\Phi(y(s)))G(y(s))^{-1}G(x)h|_V.
\end{eqnarray*}
Then,   using Assumption \ref{assGbis} and again  the fact that $\nabla^G\Phi_n\in C_m^s(U,\call(Z,V))$,
we see that $\varphi'(r)\to\varphi'(s)$ as  $r\to s$. By arbitrariness of $r\in [-1,1]$, we conclude that $\varphi'\in C^1([-1,1],V)$.
So, applying \cite[Prop.\,3.5,\,p.\,76]{Zeilder86-I}, we get
\begin{equation}\label{pplk}
\frac{\Phi_n(x+sG(x)h)-\Phi_n(x)}{s}= \frac{1}{s}\int_0^s
\varphi_n'(r)dr
= \frac{1}{s}\int_0^s
\nabla^G\Phi_n(y(r))G(y(r))^{-1}G(x)hdr, \ \ \ \forall s\in[-1,1]\setminus 0.
\end{equation}
%
Now, as $n\rightarrow\infty$, we have the convergences
\begin{align*}
\Phi_{n}(x+sG(x)h) &
\rightarrow\Phi(x+sG(x)h),\\
\Phi_{n}(x)&  \rightarrow\Phi(x)  ,\\
\nabla^G\Phi_n(y(r))G(y(r))^{-1}G(x)h  &  \rightarrow
A(y(r))G(y(r))^{-1}G(x)h, \ \ \ r\in \R.
\end{align*}
So, from \ref{pplk}, we get
\begin{eqnarray}\label{pplk2}
\frac{\Phi(x+sG(x)h)-\Phi(x)}{s}= \frac{1}{s}\int_0^s
A(y(r))G(y(r))^{-1}G(x)hdr.
\end{eqnarray}
As $A\in C_m^s(U,\call(Z,V))$, arguing as done above for $\Phi_n$,
we see that the function $[-1,1]\to \R, \ r\mapsto  A(y(r))(G(y(r))^{-1}G(x)h) $ is continuous. Then, from \eqref{pplk2} it follows
\begin{eqnarray}\label{pplk2bis}
\lim_{s\to 0}\frac{\Phi(x+sG(x)h)-\Phi(x)}{s}=
A(x)G(x)^{-1}G(x)h.
\end{eqnarray}
Let $k\coloneqq G(x)^{-1}G(x)h$ and
observe that $G(x)^{-1}G(x)k=k$ and  $G(x)h=G(x)k$, i.e. $h-k\in \mbox{ker}(G(x))$. Then,   we get from \eqref{pplk2bis} and \eqref{kerker} that
there exists $\nabla^G \Phi(x)$ and coincides with $A(x)$.
%
The convergence of $\Phi_n$ to $\Phi$ in the norm
$\left| \cdot\right| _{{\calg}_{m}^{1,G}}$ then follows, completing the proof.
\end{proof}

\begin{Corollary}
\label{cr4:DGclosed}
Let  $U$, $V$, $Z$ be three Banach spaces, let $G:X\to\call_u(Z,X)$, and let $m\ge 0$.
\begin{enumerate}[(i)]
\item
The linear unbounded operators
\[
\nabla^{G}: \mathcal{G}_{m}^{1,G}\left(  {U},V\right)  \subseteq
C_{m}\left({U},V\right)\rightarrow
C_{m}^{s}\left(  {U},\call\left(Z,V\right)  \right),
\]
\[
D^{G}: C_{m}^{1,G}\left(  {U},V\right)  \subseteq
C_{m}\left({U},V\right)\rightarrow
C_{m}\left({U},\call\left(Z,V\right)  \right),
\]
are closed.
\item Let
$f\in L^1([0,+\infty),
C_m\left(  {U},V\right))$ be  such that $f(t)\in \calg_m^{1,G}(U,V)$ (resp. $C_m^{1,G}(U,V)$) for a.e. $t\in [0,+\infty)$ and $\nabla^G f\in L^1([0,+\infty),C_m^s(U,\call(Z,V)))$ (resp., $D^G f\in L^1([0,+\infty),C_m(U,\call(Z,V)))$)
Then
$$
L\coloneqq \int_{0}^{+\infty}f(t)dt\in \calg_{m}^{1,G}\left({U},V\right),
$$
with the integral intended in Bochner sense in $C_m(U,V)$, and
$$
\nabla^{G}L =\int_{0}^{+\infty}\nabla^{G}f(t)dt \ \ \mbox{(resp.,} \  D^{G}L =\int_{0}^{+\infty}D^{G}f(t)dt),
$$
with the integral intended in Bochner sense in $C_m^s(U,\call(Z,V))$ (resp., in $C_m(U,\call(Z,V))$).
%
\end{enumerate}
\end{Corollary}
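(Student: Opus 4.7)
The plan is to deduce part (i) from the completeness argument of Proposition \ref{lm4:GBanach}, and then to obtain part (ii) as a routine consequence of part (i) together with the standard fact that bounded linear operators commute with the Bochner integral (applied to $\nabla^G$ as a bounded operator from the graph space $\calg_m^{1,G}(U,V)$ into $C_m^s(U, \call(Z,V))$).

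For part (i), I would observe that the proof of Proposition \ref{lm4:GBanach} already contains precisely the closed-graph argument needed. Given a sequence $(\Phi_n) \subset \calg_m^{1,G}(U,V)$ with $\Phi_n \to \Phi$ in $C_m(U,V)$ and $\nabla^G \Phi_n \to A$ in $C_m^s(U, \call(Z,V))$, one must show $\Phi \in \calg_m^{1,G}(U,V)$ with $\nabla^G \Phi = A$. The key step is the integral identity
\[
\frac{\Phi_n(x + sG(x)h) - \Phi_n(x)}{s} = \frac{1}{s}\int_0^s \nabla^G \Phi_n(y(r))\, G(y(r))^{-1} G(x) h \, dr,
\]
established in the proof of Proposition \ref{lm4:GBanach} via the vector-valued mean value theorem; Banach--Steinhaus, Assumption \ref{assGbis}, and dominated convergence let one pass to the limit first in $n$ and then in $s \to 0$, yielding $\nabla^G \Phi(x) = A(x)$ on $\cald_G$, and hence on the whole of $Z$ since $\ker A(x) \supseteq \ker G(x)$. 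For the Fr\'echet statement one runs the same argument with the stronger convergence $D^G \Phi_n \to A$ in $C_m(U, \call(Z,V))$.

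For part (ii), the closedness established in (i) is equivalent to the statement that $\calg_m^{1,G}(U,V)$ equipped with the graph norm $|\cdot|_{\calg_m^{1,G}}$ is a Banach space and that $\nabla^G$ is a bounded linear operator from this graph space into $C_m^s(U,\call(Z,V))$. Under the hypotheses, $f$ is Bochner integrable into the graph space, since $|f(t)|_{\calg_m^{1,G}} = |f(t)|_{C_m} + |\nabla^G f(t)|_{C_m^s} \in L^1([0,+\infty),\R)$, and its Bochner integral there coincides with its Bochner integral in $C_m(U,V)$, namely $L$. Applying $\nabla^G$ inside the Bochner integral---a standard property of bounded linear operators---produces
\[
\nabla^G L = \nabla^G \int_0^{+\infty} f(t)\, dt = \int_0^{+\infty} \nabla^G f(t)\, dt,
\]
with the right-hand integral interpreted in $C_m^s(U,\call(Z,V))$. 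The Fr\'echet version is identical, replacing $\calg_m^{1,G}$ by $C_m^{1,G}$ and $C_m^s$ by $C_m$ throughout.

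The main obstacle is the Fr\'echet side of part (i): showing that $C_m$-convergence of the $G$-derivatives preserves $G$-Fr\'echet, not merely Gateaux, differentiability at the limit. I would handle this by a standard $\varepsilon/3$ argument, combining the uniform bound $|D^G \Phi_n(x) - A(x)|_{\call(Z,V)} \leq \varepsilon_n (1+|x|^m)$ with Assumption \ref{assGbis}(iii) to control the remainder in the Fr\'echet definition uniformly for $z \in B_Z(0,1) \cap \cald_G$. A secondary bookkeeping point is Bochner measurability of $f$ as a map into the graph space: this follows from the joint measurability of $f$ in $C_m(U,V)$ and $\nabla^G f$ in $C_m^s(U,\call(Z,V))$ that is built into the hypotheses, invoking Pettis measurability if required by non-separability.
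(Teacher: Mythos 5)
Your proposal is correct and follows essentially the same route as the paper: part (i) is deduced from the completeness of the graph norm established in Proposition \ref{lm4:GBanach}, and part (ii) is the closed-operator/Bochner-integral exchange, which the paper handles by citing Hille's theorem (\cite[Th.\,6,\,p.\,47]{DiestelUhl77}) while you reprove it via integrability into the graph space --- the standard proof of that very theorem. Your extra care about the Fr\'echet case in (i) and about strong measurability into the graph space fills in details the paper leaves implicit, but introduces no new method.
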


\begin{proof}
(i) This part is a straightforward consequence of Proposition \ref{lm4:GBanach}.

(ii)
The claim  follows from item (i)  and from
%
%
 \cite[Th.\,6,\,p.\,47]{DiestelUhl77}.
\end{proof}

When $G=I$, we drop the superscript $G$ in the notation for derivatives and  in all the spaces
introduced in this subsection.

 \begin{Remark}\label{rem:fr}
 We point out that, dealing with $G$-gradients, also  classical properties other than exchange of differentiation and integration are not obvious. For instance, consider the following classical property (see \cite[Prop.4.8(c),\,p.\,137]{Zeilder86-I}): if   $f:U\to V$ is Gateaux differentiable and $\nabla f:U\to \call(U,V)$ is continuous at $x\in U$, then   $f$ is Fr\'echet differentiable at $x$ and $Df(x)=\nabla f(x)$. If we want to extend this property to $G$-gradients, we must
strengthen Assumption \ref{assGbis}: for example a sufficient condition is to require  that
$$
\lim_{y\to x}\sup_{h\in\cald_G} \frac{|G^{-1}(y)G(x)h-h|}{|h|}=0.
$$
Without this assumption, the conclusion is not guaranteed.
\end{Remark}

\section{Mild solutions of semilinear elliptic equations in Hilbert spaces}
\label{sec:sem}

In this section we address  the problem of existence and uniqueness of solutions to the  semilinear elliptic  equation \eqref{eq4:HJbasicelliptic} in a separable Hilbert space $H$.
In order to cover important families of applied examples,
we consider a version  of  \myref{eq4:HJbasicelliptic} where the dependence of the Hamiltonian on the gradient is further specified. Precisely, given a separable Hilbert space $K$ and  $G: H\to \call_u (K,H)$, we consider a nonlinear term in the form $F(x,v(x),Dv(x))=F_0(x,v(x),D^{G}v(x))$, where
  $D^{G}$ denotes the $G$-derivative  defined in the previous section\footnote{Here we are  using the symbol $D^G$ only in a formal sense, without necessarily referring to the $G$-Fr\'echet derivative.}. Then,  \myref{eq4:HJbasicelliptic}  reads as
\begin{equation}
\label{eq4:HJbasicelliptic1}
\lambda v(x) -\frac{1}{2}\;\mbox{\rm Tr}\;[Q(x)D^2v(x)]
-\langle Ax+ b(x),Dv(x) \rangle  -F_0(x,v(x),D^{G}v(x))=0,
\quad  x \in H.
\end{equation}
%
{From now on, we assume throughout the whole paper that $G$ satisfies Assumption \ref{assGbis} with $U=H$ and $Z=K$. Due to that, the results of Propositions \ref{intder} and \ref{lm4:GBanach} hold true.}

We now introduce a formal argument to motivate the concept of \emph{mild solution} to
\myref{eq4:HJbasicelliptic1}.
Let us consider
the second order differential operator associated to the linear part of \eqref{eq4:HJbasicelliptic1}, that is the linear operator formally defined by
\begin{equation}\label{operatorA}
[\mathcal{A} \phi](x)\coloneqq \frac{1}{2}\;\mbox{\rm Tr}\;[Q(x)D^2\phi(x)]
+\< Ax+ b(x),D\phi(x) \>.
\end{equation}
Recall that, if $X$ is a Banach space, a family $(P_s)_{s\geq 0}\subseteq \call(X)$  is called a one parameter semigroup   if
$P_0=I$ and
$
P_{t}P_{s}=P_{t+s}
$ for every $s,t\ge 0 $.
 The operator $\cala$ defined in \eqref{operatorA} can be formally associated to a transition semigroup  $(P^\cala_s)_{s\geq 0}$ of linear operators in two ways.
\begin{enumerate}
\item
Through the stochastic differential equation (SDE)
\begin{equation}
\left\{
\begin{array}
[c]{ll}
dX\left(  s\right)  =\left[  AX\left(  s\right)
+b\left(X\left(s\right)\right)  \right]  ds
+\sigma\left(X\left( s\right)\right)  dW\left(s\right),
 & s\geq 0, \\
X\left(  0\right)  =x, & x\in H,
\end{array}
\right.  \label{eq4:SDEformalell}
\end{equation}
where $Q(x)\coloneqq \sigma(x)\sigma^*(x)$ for some $\sigma(x)\in \call(\Xi,H)$, where $\Xi$ is a separable Hilbert space (of course this is possible by taking $\Xi=H$ and $\sigma(x)= Q(x)^{1/2}$, and $W$ is a cylindrical Brownian motion in $\Xi$ (see \cite[Ch.\,4]{DaPratoZabczyk14}).
Then, assuming existence and uniqueness of solutions, in some sense, to \eqref{eq4:SDEformalell} and calling $X(\cdot,x)$ this solution for each given $x\in H$, one defines for $\phi\in B_m(H)$,
\begin{equation}\label{trsem}
P^\cala_{s}[\phi](x)\coloneqq \E[\phi (X(s,x)], \qquad  \forall (s,x)\in[0,+\infty)\times H.
\end{equation}

\item
Through the Kolmogorov equation
\begin{equation}
\label{eq4:HJbasicelliptic1bis}
\begin{cases}
u_t(t,x)=\frac{1}{2}\;\mbox{\rm Tr}\;[Q(x)D^2u(t,x)]
+\langle Ax+ b(x),Du(t,x) \rangle,
\quad  (t,x) \in [0,+\infty)\times H,\\
u(0,x)=\phi(x), \ \ x\in H.
\end{cases}
\end{equation}
Then, assuming well-posedness (existence and uniqueness of solution, in some sense) to \eqref{eq4:SDEformalellbis}, calling $u_\phi$ this solution, one defines for $\phi\in B_m(H)$
\begin{equation}\label{trsembis}
P^\cala_{s}[\phi](x)\coloneqq u_\phi(s,x), \qquad  \forall (s,x)\in[0,+\infty)\times H.
\end{equation}
\end{enumerate}
If $(P^\mathcal{A}_s)_{s\geq 0}$ was a $C_0$-semigroup, e.g. in $C_m(H)$ --- that is, other than the semigroup properties, also $\lim_{s\to 0^+}P^\cala_s\phi =\phi$ holds for every $\phi\in C_m(H)$ --- and if $\mathcal{A}$ was its generator (see \cite[Ch.\,II]{EngelNagel99}), it would hold the classical representation of the resolvent operator as Laplace transform of the semigroup (see \cite[Ch.\,II,\,Th.\,1.10]{EngelNagel99}): for all $\lambda$ large enough, the operator $\lambda I- \cala:\cald(\cala)\rightarrow H$ is bijective with bounded inverse and  \begin{equation}\label{eq:4resolventintegral1bis}
(\lambda I- \cala)^{-1}[g]= \int_0^{+\infty} e^{-\lambda s} P^\cala_s [g] ds, \ \ \forall g\in C_m(H),
\end{equation}
with the integral intended in Riemann sense in the space  $C_m(H)$.
Unfortunately,
$(P^\cala_s)_{s\geq 0}$ is not, in general, a $C_0$-semigroup in $C_m(H)$ or in many other functional spaces.  Indeed, in the framework of spaces of functions not vanishing at infinity, the $C_0$-property fails even in  basic cases. For instance, this property fails in the case of Ornstein-Uhlenbeck semigroup in $C_b(\mathbb{R})$ (see, e.g.,  \cite[Ex.\ 6.1]{Cerrai94} for a counterexample in $UC_b(\R)$, or  \cite[Lemma\ 3.2]{DaPratoLunardi95},
 which implies this semigroup is $C_0$ in $UC_b(\R)$ if and only if the drift of the SDE vanishes). Even worse: given $\varphi\in C_b(H)$,  the map $[0,+\infty)\to C_b(H)$, $t\mapsto P_t^\cala\varphi$ is not in general measurable, as shown in Example \ref{OU-notmes}; this prevents to intend the integral in \eqref{eq:4resolventintegral1bis} in Bochner sense in the space $C_b(H)$. Nevertheless, to some extent, one can still consider the operator $\mathcal{A}$ as a kind of generator for $(P^\cala_s)_{s\geq 0}$ and prove a pointwise $C_0$-property and a pointwise  counterpart of \eqref{eq:4resolventintegral1bis}: that is, one can prove (see, e.g., \cite{Cerrai94, Priola99}) that, for each fixed
$\phi \in C_m(H)$ and $x\in H$, it is $\lim_{s\to 0^+}P^\cala_s[\phi](x)=\phi(x)$ and for all $\lambda$ large enough
\begin{equation}\label{eq:4resolventintegral1}
(\lambda I- \cala)^{-1}[g](x)= \int_0^{+\infty} e^{-\lambda s} P^\cala_s [g](x) ds.
\end{equation}
Then, the solution of the linear equation
$(\lambda - \cala) u =g$ is
\begin{equation}\label{eq:4resolventintegral}
u(x)=(\lambda I- \cala)^{-1}[g](x)= \int_0^{+\infty} e^{-\lambda s} P^\cala_s [g](x) ds, \quad x\in H.
\end{equation}
Now,
(\ref{eq4:HJbasicelliptic1}) can be rewritten as
$$
\lambda v(x)- (\cala v)(x) =F_0(x,v(x),D^G v(x)),
 \quad x \in H.
$$
Then, taking $g(x)=F_0(x,v(x),D^Gv(x))$ and  applying \myref{eq:4resolventintegral}, we can rewrite  (\ref{eq4:HJbasicelliptic1}) as an integral equation:
\begin{equation}
v(x)= \int_0^{+\infty }e^{-\lambda s}
P^\cala_s \left[  F_0(\cdot,v\left(\cdot\right),D^{G}v\left(\cdot\right))\right](x)ds,
\quad x\in H. \label{eq4:solmildHJBell}
\end{equation}
The concept of mild solution to \eqref{eq4:HJbasicelliptic1} relies on the last integral form  \eqref{eq4:solmildHJBell}.
\begin{Definition}
\label{df4:solmildHJBell} Let $m\geq 0$ and $G:H\to \call_u(K,H)$, with $K$ Hilbert space. We say that a
function $v:H\to \R$ is a mild
solution to (\ref{eq4:HJbasicelliptic1}) in the space $\calg_m^{1,G}(H)$ (respectively, $C _{m}^{1,G}\left(H\right)$) if it belongs to $\calg_m^{1,G}(H)$ (respectively, $C _{m}^{1,G}\left(H\right)$) and
 (\ref{eq4:solmildHJBell}) holds for every $x \in H$.
\end{Definition}

Let us start with the assumptions on the Hamiltonian function $F_0$.

\begin{Assumption}\label{hp4:F0ell}
\begin{enumerate}[(i)]
\item[]
\item\label{F01} $F_0:H\times \R\times (K,\tau^K_w)\to\R$ is sequentially continuous.

\item\label{F02} There exists $L>0$ such that
\[
\left|  F_0(x,y_{1},z_{1})  -F_0(x,y_{2},z_{2})  \right|
\leq L\left(  |y_{1}-y_{2}| +|z_{1}-z_{2}|_{K}\right), \ \ \forall x\in H, \ \forall y_{1},y_{2}\in\mathbb{R}, \ \forall  z_{1},z_{2}\in K.
\]

\item\label{F03} There exists $m\geq 0$ and  $L^{\prime}>0$  such that
\[
\left|F_0(x,y,z)\right|
\leq L^{\prime}\left(1+|x|_H^m+|y| +|z|_{K}\right)  ,\ \ \forall x\in H, \ \forall y\in\mathbb{R}, \ \forall  z\in K.
\]

\end{enumerate}
\end{Assumption}

\begin{Assumption}\label{hp4:F0ellbis}
\begin{enumerate}[(i)]
\item[]
\item\label{F01bis} $F_0:H\times \R\times K\to\R$ is continuous.
\item Assumptions \ref{hp4:F0ell}(ii)-(iii) hold.
%
%
%

\end{enumerate}
\end{Assumption}

\begin{Remark}
We notice that the assumption of sequential continuity of $F_0$ in the last variable in Assumption \ref{hp4:F0ell}
(\ref{F01}) reduces to the assumption of continuity when $K$ is finite-dimensional, which covers, in particular, Hamiltonian functions  depending only on finite-dimensional   projections of the gradient. This arises, typically, in control problems with delays (see \cite{Federico11, FedericoGoldysGozzi10SICON, FedericoGoldysGozzi11SICON, FedericoTacconi14, GozziMarinelli06, GozziMasiero12}.
\end{Remark}

We now formulate assumptions directly on $(P^\cala_s)_{s\geq 0}$ (whatever is the way to define it, by \eqref{eq4:SDEformalell} or \eqref{eq4:HJbasicelliptic1bis}) and on $F_0$ that allow to solve (\ref{eq4:solmildHJBell})  under the restriction that $\lambda$ has to be large enough. In order to simplify the notation we write $P_s$ for $P_s^\cala$. One should keep in mind that, in the following,  $P_s$ is always an object associated to $\cala$ through \eqref{eq4:SDEformalell} or \eqref{eq4:HJbasicelliptic1bis}. Nevertheless, under such assumptions,  the integral equation \eqref{eq4:solmildHJBell} could be seen as the mild form of different, possibly more general semilinear
 equations, e.g. when $(P_s)_{s\geq 0}$ is associated to more general processes, e.g., L\'evy
processes, in which case the operator $\cala$ is integro-differential.

We provide two different sets of assumptions  concerning the semigroup $(P_s)_{s\geq 0}$.
The last three requirements in each of the next two sets of assumptions concern smoothing properties of the semigroup: it is required that continuous functions are mapped by the operator $P_s$ into $G$-differentiable ones for each $s>0$ and that some related issues concerning measurability and growth are fulfilled too.
Let
\begin{align}
\cali:= & \biggl\{
\eta\in L^1_{loc}([0,+\infty),\R_+): \
 \eta \hbox{ is bounded in a neighborhood of $+\infty$ }
\biggr\}.
\label{eq:defI}
\end{align}
\begin{Assumption}\label{hp4:Psemigroupell}
Let $m\geq 0$ be the constant of Assumption \ref{hp4:F0ell}.
\begin{enumerate}[(i)]
\item\label{hp4:Psemigroupell2}
The family
$(P_{s})_{s\geq 0}$ is  a semigroup of continuous linear operators in the space $C_m(H)$.
\item The map $[0,+\infty)\times H\to \R, \ (s,x)\mapsto P_s[\phi](x)$ is measurable for every $\phi\in C_m(H)$.
\item\label{assi} There exist constants $C>0$ and $a\in \R$ such that
$$
|P_s[\phi](x)|\leq Ce^{as}|\phi|_{C_m(H)}(1+|x|^m), \ \ \ \forall s\geq 0, \ \forall x\in H, \ \forall \phi\in C_m(H),
$$
i.e., $\left|  P_{s}\right|_{\call(C_m(H))}  \leq Ce^{a s}$ for every $s\geq 0$.
\item
\label{hp4:smoothingGell}
 $P_{s}(C_m(H))\subseteq \calg_m^{1,G}(H)$
 for every $s>0$.
\item\label{ip5} The map $(0,+\infty)\times H\to K, \ (s,x)\mapsto \nabla^GP_s[\phi](x)$ is measurable for every $\phi\in C_m(H)$.
\item\label{ip4}
There exists $\gamma_G\in \cali$ and $a_G\in \R$ such that
$$
\left|\nabla^{G}P_{s}[\phi](x)\right|_K  \leq\gamma_G(s)e^{a_G s} |\phi|_{C_m(H)}(1+|x|^m), \ \ \ \forall \phi\in C_m(H), \ \forall x\in H,  \ \forall s>0,$$
i.e.,
$|\nabla^GP_s|_{\call(C_m(H),C_m^s(H,K))}\leq\gamma_G(s)e^{a_Gs}$ for every $s>0$.
%
%

\end{enumerate}
\end{Assumption}

\smallskip

\begin{Assumption}\label{hp4:Psemigroupellbis}Let $m\geq 0$ be the constant of Assumption \ref{hp4:F0ellbis}.
\begin{enumerate}[(i)]
\item\label{hp4:Psemigroupell2cont}
Assumptions \ref{hp4:Psemigroupell}(i)--(iii) hold.

%
\item
\label{hp4:smoothingGellbis}
 $P_{s}(C_m(H))\subseteq C_m^{1,G}(H)$
 for every $s>0$.
\item\label{ip5bis}
The map $(0,+\infty)\times H \to K, \ (s,x) \mapsto D^G P_{s}[\phi](x)$ is measurable.
\item\label{ip4bis}
There exists $\gamma_G\in \R$ and $a_G\in \R$
such that
$$
\left|D^{G}P_{s}[\phi](x)\right|_K  \leq\gamma_G(s)e^{a_G s} |\phi|_{C_m(H)}(1+|x|^m), \ \ \ \forall \phi\in C_m(H), \ \forall x\in H,  \ \forall s>0,$$
i.e.,
$|D^GP_s|_{\call(C_m(H),C_m^s(H,K))}\leq\gamma_G(s)e^{a_Gs}$ for every $s>0$.

\end{enumerate}
\end{Assumption}
\begin{Remark}
About the growth estimate on the semigroup in Assumptions \ref{hp4:Psemigroupell} and \ref{hp4:Psemigroupellbis}, we note that we generically require $a\in\R$. As matter of fact, for transitions semigroups, which are the ones we are interested in, it is always $a\geq 0$.
Nevertheless, requiring $a\geq 0$ is not necessary for the arguments of Theorem \ref{th4:exunHJBgeneralell} below. We keep the (a priori weaker) assumption
$a\in \R$, which may be verified in other cases (e.g. for semigroups of negative type), and may give rise to a sharper result in Theorem \ref{th4:exunHJBgeneralell}, guaranteeing the conclusion also (possibly) for negative $\lambda$.
\end{Remark}

We state and prove now our main result.
\begin{Theorem}
\label{th4:exunHJBgeneralell}
\begin{enumerate}[(i)]
\item[]
\item Let Assumptions \ref{hp4:Psemigroupell} and \ref{hp4:F0ell} hold. Then,  there exists $\lambda_0\in \R$ such that, for every $\lambda > \lambda_0$, there exists
 a unique mild solution $v\in \calg_m^{1,G}(H)$ to (\ref{eq4:HJbasicelliptic1}).
\item Let Assumptions \ref{hp4:Psemigroupellbis} and \ref{hp4:F0ellbis} hold. Then there exists $\lambda_0\in \R$ such that, for every $\lambda > \lambda_0$, there exists
 a unique mild solution $v\in C_m^{1,G}(H)$ to (\ref{eq4:HJbasicelliptic1}).
 \end{enumerate}
%
\end{Theorem}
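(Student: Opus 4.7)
The plan is to run a Banach fixed-point argument in the space $\calg_m^{1,G}(H)$ (resp.\ $C_m^{1,G}(H)$), which is complete by Proposition \ref{lm4:GBanach}. Define
$$
\Upsilon(v)(x) \;:=\; \int_0^{+\infty} e^{-\lambda s}\, P_s\bigl[F_0(\cdot,v(\cdot),\nabla^G v(\cdot))\bigr](x)\,ds,
$$
and show that for $\lambda$ sufficiently large $\Upsilon$ is a strict contraction from $\calg_m^{1,G}(H)$ into itself (analogously with $D^G$ for part (ii)).

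First I would verify the self-mapping property. For $v\in\calg_m^{1,G}(H)$ with $|v|_{\calg_m^{1,G}}=M$, the composed map $g_v(x):=F_0(x,v(x),\nabla^G v(x))$ satisfies the pointwise growth bound $|g_v(x)|\le L'(1+2M)(1+|x|^m)$ by Assumption \ref{hp4:F0ell}(iii). I would then argue $g_v\in C_m(H)$: if $x_n\to x$ in $H$, then for every $z\in K$ one has $\langle\nabla^G v(x_n),z\rangle_K\to\langle\nabla^G v(x),z\rangle_K$ by strong continuity of $\nabla^G v\in C_m^s$; combined with the uniform bound $|\nabla^G v(x_n)|_K\le M(1+|x_n|^m)$ this gives $\nabla^G v(x_n)\rightharpoonup \nabla^G v(x)$ weakly in $K$, so the sequential continuity of $F_0$ from Assumption \ref{hp4:F0ell}(i) yields $g_v(x_n)\to g_v(x)$. (For part (ii) this step is trivial because $D^G v$ is norm-continuous and $F_0$ is jointly continuous.) By the smoothing hypothesis \ref{hp4:Psemigroupell}(iv)--(vi), $P_s[g_v]\in\calg_m^{1,G}(H)$ for every $s>0$ with
$$
|P_s[g_v](x)|\le Ce^{as}|g_v|_{C_m}(1+|x|^m),\qquad |\nabla^G P_s[g_v](x)|_K\le \gamma_G(s)e^{a_Gs}|g_v|_{C_m}(1+|x|^m).
$$

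Next I would apply Proposition \ref{intder} to $f(s,x)=e^{-\lambda s}P_s[g_v](x)$. The measurability hypotheses come from Assumption \ref{hp4:Psemigroupell}(ii) and (v); the pointwise $G$-differentiability from (iv); and a common dominating integrand from the two displayed estimates above, namely $g(s)=|g_v|_{C_m}\bigl(Ce^{-(\lambda-a)s}+\gamma_G(s)e^{-(\lambda-a_G)s}\bigr)$. Because $\gamma_G\in\cali$ is locally integrable and bounded near $+\infty$, this $g$ lies in $L^1([0,+\infty),\R)$ as soon as $\lambda>\max\{a,a_G\}$. Proposition \ref{intder} then gives $\Upsilon(v)\in\calg_m^{1,G}(H)$ and the crucial identity
$$
\nabla^G\Upsilon(v)(x)\;=\;\int_0^{+\infty} e^{-\lambda s}\,\nabla^G P_s[g_v](x)\,ds,\qquad x\in H.
$$

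For contractivity, given $v_1,v_2\in\calg_m^{1,G}(H)$ the Lipschitz bound of Assumption \ref{hp4:F0ell}(ii) yields
$$
|g_{v_1}(x)-g_{v_2}(x)|\;\le\; L\bigl(|v_1(x)-v_2(x)|+|\nabla^G v_1(x)-\nabla^G v_2(x)|_K\bigr)\;\le\; L(1+|x|^m)\,|v_1-v_2|_{\calg_m^{1,G}},
$$
so $|g_{v_1}-g_{v_2}|_{C_m}\le L\,|v_1-v_2|_{\calg_m^{1,G}}$. Inserting this into the semigroup estimates and the $\Upsilon$, $\nabla^G\Upsilon$ integral representations produces
$$
|\Upsilon(v_1)-\Upsilon(v_2)|_{\calg_m^{1,G}}\;\le\; L\left(\frac{C}{\lambda-a}+\int_0^{+\infty} e^{-(\lambda-a_G)s}\gamma_G(s)\,ds\right)|v_1-v_2|_{\calg_m^{1,G}}.
$$
Since $\gamma_G\in\cali$, dominated convergence shows the bracket tends to $0$ as $\lambda\to+\infty$, so there is $\lambda_0\in\R$ such that for all $\lambda>\lambda_0$ the map $\Upsilon$ is a strict contraction. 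Banach's fixed-point theorem then delivers the unique mild solution. I expect the main technical obstacle to be the continuity verification of $g_v$ in step one: reconciling the mere strong continuity of $\nabla^G v$ with the nonlinearity $F_0$ requires the careful use of the weak sequential continuity hypothesis \ref{hp4:F0ell}(i) together with the Banach--Steinhaus-type uniform bound on $\nabla^G v$, a subtlety that disappears in part (ii) where the finer topology on $D^G v$ matches the continuity hypothesis on $F_0$.
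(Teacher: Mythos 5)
Your proof is correct, but it runs the fixed point in a different space than the paper does, and the difference is worth noting. The paper's proof works on the \emph{product} space $C_m(H)\times C_m^s(H,K)$, treating the unknown function $u$ and its putative $G$-gradient $v$ as two independent components of a pair $(u,v)$, proving that $\Upsilon=(\Upsilon_1,\Upsilon_2)$ is a contraction there, and only \emph{afterwards} invoking Proposition \ref{intder} once to identify $\Upsilon_2[u,v]=\nabla^G\Upsilon_1[u,v]$, so that the first component of the fixed point is the mild solution. You instead contract directly on $\calg_m^{1,G}(H)$ (resp.\ $C_m^{1,G}(H)$), which forces you to lean on two things the paper's route does not need at the fixed-point stage: the completeness of $\calg_m^{1,G}(H)$ (Proposition \ref{lm4:GBanach}, available since Assumption \ref{assGbis} is standing throughout the paper) and an application of Proposition \ref{intder} to \emph{every} $v$ in the space in order to verify the self-mapping property, rather than a single application at the end. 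The contraction constant $\alpha(\lambda)=L\bigl[\tfrac{C}{\lambda-a}+\int_0^{+\infty}e^{-(\lambda-a_G)s}\gamma_G(s)\,ds\bigr]$ is the same in both arguments, as is the delicate point you correctly isolate: that $x\mapsto F_0(x,v(x),\nabla^G v(x))$ lands in $C_m(H)$ because $\nabla^G v\in C_m^s(H,K)\cong C_m^s(H,K^*)$ means precisely that $\nabla^G v$ is continuous into $(K,\tau_w^K)$, which is what the sequential continuity hypothesis \ref{hp4:F0ell}(i) is designed to match. Your route is slightly more economical in presentation; the paper's product-space route postpones all differentiability considerations to a single step and makes the uniqueness argument (any mild solution $u^*$ yields the fixed point $(u^*,\nabla^G u^*)$) completely mechanical. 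Both are sound.
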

%
%

\begin{proof}

{\em Proof of (i)}.
Consider the product Banach space
$ C _{m}(H)\times  C_{m}^s(H,K)$
endowed with the product norm $|\cdot|_{C_m(H)}\times |\cdot|_{C_m^s(H,K)}$. We define,
for  $(u,v)\in  C_{m}(H)\times  C^s_{m}(H,K)$,
\begin{equation}
\Upsilon_{1}\left[  u,v\right]  (x)\coloneqq \int_0^{+\infty}e^{-\lambda s}
P_{s}\left[F_0(\cdot,u(\cdot),v(\cdot))  \right](x)ds, \ \ x\in H, \label{eq4:Lambda1ell}
\end{equation}
\begin{equation}
\Upsilon_{2}\left[  u,v\right]  (x) \coloneqq \int_{0}^{+\infty}e^{-\lambda s}
\nabla^{G}P_{s}\left[F_0(\cdot,u(\cdot),v(\cdot))\right](x)ds, \ \ x\in H.
\label{eq4:Lambda2ell}
\end{equation}
We now accomplish the proof in three steps. In the rest of the proof, $C$, $a$, $a_G$, and $\gamma_G$ are the objects appearing in Assumption \ref{hp4:Psemigroupell}.

\smallskip

{\bf 1.} {\em $\Upsilon=(\Upsilon_1,\Upsilon_2)$ is well defined as a map from $C_{m}(H)\times C^s_{m}(H,K)$ into itself for all $\lambda>a\vee a_G$.}

Let $\lambda >a\vee a_G$ and let $(u,v)\in C _{m}(H)\times  C^s_{m}(H,K)$.
The fact that $v\in C_m^s(H,K)\cong C_m^s(H,K^*)$ means that  $v:(H,|\cdot|_H)\to (K,\tau_w^K)$ is continuous. Hence,
setting $\psi(\cdot):=F_0\left(\cdot,u(\cdot),v(\cdot)\right)$, we get $\psi\in C_m(H)$   by Assumption \ref{hp4:F0ell}(\ref{F01}) and (\ref{F03}).
Then, by Assumption \ref{hp4:Psemigroupell}(ii)-(\ref{assi}), $\Upsilon_1[u,v](x)$ is well defined for every $x\in H$ and every $\lambda>a$.
Finally, $\Upsilon_1[u,v]\in C_m(H)$ by dominated convergence theorem and by Assumption \ref{hp4:Psemigroupell}(\ref{assi}).

Concerning  $\Upsilon_2[u,v]$, first of all we note that  $\Upsilon_2[u,v](x)$ makes  sense as Bochner integral in $K$ by Assumption \ref{hp4:Psemigroupell}(\ref{hp4:smoothingGell})--(\ref{ip4})
%
for every $x\in H$ and every $\lambda >a_G$.
Moreover, in view  of the same assumptions, by dominated convergence theorem and by \eqref{ppppk} we conclude that $\Upsilon_2[u,v] \in C^s_{m}(H,K)$.

\smallskip

{\bf 2.} {\em $\Upsilon$ is contraction in
$ C_{m}(H)\times  C_{m}^s(H,Z)$ for $\lambda$ sufficiently large.}

\noindent
Let $( u_{1},v_{1}), ( u_{2},v_{2})\in  C_{m}(H)\times  C^s_{m}(H,K)$.
By  Assumption \ref{hp4:Psemigroupell}(\ref{assi})
and  Assumption \ref{hp4:F0ell}(\ref{F02}), we have
\begin{align*}
&  \left|  \Upsilon_{1}\left[  u_{1},v_{1}\right]
-\Upsilon_{1}\left[  u_{2},v_{2}\right]\right|_{C_m(H)}
\\
&   \leq \sup_{x\in H}\frac{\left|\int_{0}^{+\infty}e^{-\lambda s} P_{s}
\left[F_0(\cdot,u_{1}(\cdot),v_{1}(\cdot))
-F_0(\cdot,u_{2}(\cdot),v_{2}(\cdot))\right](x) ds\right|}{1+|x|^m}
\\
&  \leq \int_{0}^{+\infty}e^{-\lambda s}\sup_{x\in H} \frac{\left|P_{s}
\left[F_0(\cdot,u_{1}(\cdot),v_{1}(\cdot))
-F_0(\cdot,u_{2}(\cdot),v_{2}(\cdot))\right](x)\right|}{1+|x|^m} ds
\\
& \le  \int_{0}^{+\infty}C e^{-(\lambda-a) s}
\left|F_0(\cdot,u_{1}(\cdot),v_{1}(\cdot))
-F_0(\cdot,u_{2}(\cdot),v_{2}(\cdot))\right|_{ C _m(H)} ds
\\
&\le
\frac{C}{\lambda -a}
L\left(|u_{1} -u_{2}|_{ C_{m}(H)}+|v_{1} -v_{2}|_{ C^s _m(H,K)}\right).
\end{align*}
On the other hand, similarly, using  Assumption \ref{hp4:Psemigroupell}(\ref{ip4})
and Assumption \ref{hp4:F0ell}(\ref{F02}), we have
\begin{align*}
& \left|\Upsilon_2 [u_1,v_1]- \Upsilon_2 [u_2,v_2]\right|_{ C^s_m(H,K)}
\le
L\left(\int_{0}^{+\infty}e^{-(\lambda-a_G) s}\gamma_G(s)ds\right)
\left(|u_{1} -u_{2}|_{C _{m}(H)}+|v_{1} -v_{2}|_{ C_m^s(H,K)}\right).
\end{align*}
We conclude
\begin{align*}
&\left|\Upsilon_{1}[u_1,v_1]-\Upsilon_{1}[u_2,v_2] \right|_{ C_{m}(H)}
+\left|\Upsilon_{2}[u_1,v_1]-\Upsilon_{2}[u_2,v_2]\right|_{ C_m^s(H,Z)}\\
&  \leq
L\left[\frac{C}{\lambda -a} +
\int_{0}^{+\infty}e^{-(\lambda-a_G) s}\gamma_G(s)ds \right]
\left[  \left|  u_{1}-u_{2}\right|_{ C_{m}(H)}
+\left|v_{1}-v_{2}\right|_{ C^s_{m}(H.Z)}\right].
\end{align*}
Setting
$$
\alpha(\lambda):=L\left[\frac{C}{\lambda -a} +
\int_{0}^{+\infty}e^{-(\lambda-a) s}\gamma_G(s)ds \right]
$$
we see that $\alpha:(a\vee  a_G, +\infty)\to (0,+\infty)$ is strictly decreasing and continuous,  and that  $\lim_{\lambda\to a^+}C(\lambda)=+\infty$ and $\lim_{\lambda\to \,\, + \infty}C(\lambda)=0$.
Hence, letting $\lambda_0\coloneqq \inf\{\lambda>a\vee a^G: \ \alpha (\lambda)\leq 1\}$, with the convention $\inf\emptyset =a\vee a_G$,
we get that the map $\Upsilon$
is a contraction  for each for $\lambda>\lambda_0$ and, therefore, admits a unique fixed point in
$ C_{m}(H)\times  C_m^s(H,K)$ for such values of $\lambda$.

\smallskip

{\bf 3.} {\em The first component of the
fixed point of $\Upsilon$ is the unique mild solution of \myref{eq4:HJbasicelliptic1} for $\lambda>\lambda_0$. }

By Proposition  \ref{intder} and Assumption \ref{hp4:Psemigroupell}(ii)-(\ref{assi}),
$\Upsilon_{1}\left[  u,v\right]$ is $G$-Gateaux  differentiable on $H$ for all $(u,v) \in  C_{m}(H)\times  C _m^s(H,K)$ and
$\Upsilon_{2}\left[  u,v\right]=\nabla^{G}\Upsilon_{1}\left[  u,v\right]$.

Let now $[\overline u,\overline v]$ be the fixed point of $\Upsilon$, so
$\Upsilon[\overline u,\overline v]  =(\overline u,\overline v)$.
It follows, from what said just above, that $\overline{u}$ is $G$-Gateaux differentiable and
$\nabla^{G}\overline{u}=\overline{v}$, hence
$\nabla^{G}\overline{u}\in  C_{m}(H,K)$.
Finally, replacing $\overline v$ by $\nabla^G \overline u$   into the definition of $\Upsilon_1$, we get
$$\overline{u}(x)=\int_0^{+\infty}e^{-\lambda s}P_{s}
\left[F_0(\cdot,\overline{u}(\cdot),\nabla^{G}\overline{u}(\cdot))\right](x)ds
$$
The above imply that $\overline u$ is a mild solution of \myref{eq4:HJbasicelliptic1}.

We now prove uniqueness.
Let $u^{\ast}$ be another mild solution to equation (\ref{eq4:HJbasicelliptic1}). Then $u^{\ast}$ is $G$-Gateaux differentiable with $\nabla^G u^*\in  C^s _m(H,K)$.
Hence, setting $v^*:=\nabla^G u^*$, we  see that $(u^*,v^*)$
is a fixed point of $\Upsilon$ and so, by uniqueness,  $u^*=\overline u$ and $v^*=\overline v$.
This completes the proof.

\medskip

{\em Proof of (ii)}.
The proof of this claim  works similarly to  the proof of {\em (i)}: one just needs to
perform the fixed point argument for the map $\Upsilon$
in the space $C_m(H)\times C_{m}(H,U)$ replacing $\nabla^G$ by $D^G$.
In step 1 the difference is that we need to prove that $\Upsilon$ maps the space
$C_m(H)\times C_{m}(H,U)$ in itself.
Indeed, given $(u,v)\in C_m(H)\times C_{m}(H,U)$ the function $\psi(\cdot)=F(\cdot,u(\cdot),v(\cdot))$ is continuous as a  composition of continuous functions (Assumption \ref{hp4:F0ellbis}-(i)). Then, the fact that
$\Upsilon [u,v] \in C_m(H)\times C_{m}(H,U)$ follows, similarly to point (i),
from Assumption \ref{hp4:Psemigroupellbis}.
Steps 2 and 3 are performed exactly in the same way and we omit them.
\end{proof}

\begin{Remark}

The fact that an existence/uniqueness theorem
holds under very general assumptions on the data only if $\lambda$ is large enough is a structural issue, arising  also with other concepts of solution.
However, under suitable additional assumptions (see \cite{Cerrai01, Cerrai01-40, GozziRouy96}), by using monotone operator techniques one
can extend such a result to each $\lambda >0$.
\end{Remark}
%
\subsection*{Strong Feller case}
Assumption \ref{hp4:F0ell}(i) is not verified
in some concrete cases, e.g. when $F_0$ depends on the norm of the last variable.
A way to overcome this problem is to  require more on the semigroup
$(P_{s})_{s \ge 0}$, assuming that it a semigroup of bonded linear operators on $B_m(H)$ and it is  strongly Feller,  i.e.
\begin{equation}\label{strongfeller}
P_{s}(B_m(H))\subseteq C_m(H), \qquad \forall s > 0.
\end{equation}
This assumption is verified in many important examples and in  this case one can prove the following second main result.
\begin{Theorem}\label{Th:strongfeller}
\begin{enumerate}[(i)]
\item[]
\item Let Assumption \ref{hp4:Psemigroupell} hold replacing $C_m(H)$ by $B_m(H)$ everywhere (so, in particular,  \eqref{strongfeller} holds).
 Let $F_0:H\times\R\times K\to \R$ be measurable and let  \ref{hp4:F0ell}(ii)-(iii) hold. Then,  there exists $\lambda_0\in \R$ such that, for every $\lambda > \lambda_0$, there exists
 a unique mild solution $v\in \calg_m^{1,G}(H)$ to (\ref{eq4:HJbasicelliptic1}).
\item Let Assumption \ref{hp4:Psemigroupellbis} hold  replacing $C_m(H)$ by $B_m(H)$ everywhere (so, in particular,  \eqref{strongfeller} holds).
  Let $F_0:H\times\R\times K\to \R$ be measurable and let  \ref{hp4:F0ell}(ii)-(iii) hold. Then there exists $\lambda_0\in \R$ such that, for every $\lambda > \lambda_0$, there exists
 a unique mild solution $v\in C_m^{1,G}(H)$ to (\ref{eq4:HJbasicelliptic1}).
 \end{enumerate}
\end{Theorem}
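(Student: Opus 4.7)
The plan is to mimic, almost verbatim, the three-step fixed point argument used in the proof of Theorem \ref{th4:exunHJBgeneralell}. One defines the same operators $\Upsilon_1, \Upsilon_2$ as in \eqref{eq4:Lambda1ell}--\eqref{eq4:Lambda2ell} and seeks a fixed point of $\Upsilon = (\Upsilon_1,\Upsilon_2)$ in the Banach space $C_m(H)\times C_m^s(H,K)$ for part (i), or $C_m(H)\times C_m(H,K)$ for part (ii). The only genuine difference with the earlier proof concerns Step 1 (well-posedness of $\Upsilon$), since one can no longer invoke sequential continuity of $F_0$ to show that the composition inside $P_s$ is continuous.

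First I would fix $(u,v)\in C_m(H)\times C_m^s(H,K)$ and set $\psi(\cdot):=F_0(\cdot,u(\cdot),v(\cdot))$. Since $F_0$ is \emph{merely} measurable and $u,v$ are continuous, $\psi$ is Borel measurable, and by \ref{hp4:F0ell}(iii) it satisfies $|\psi(x)|\le L'(1+|x|^m+|u(x)|+|v(x)|_K)$, hence $\psi\in B_m(H)$. Now the strong Feller property encoded in the modified assumption---$P_s(B_m(H))\subseteq \calg_m^{1,G}(H)$ (resp.\ $C_m^{1,G}(H)$) for every $s>0$---ensures that $P_s[\psi]\in C_m(H)$ for every $s>0$, and that $\nabla^GP_s[\psi]$ (resp.\ $D^GP_s[\psi]$) is well defined in $C_m^s(H,K)$ (resp.\ $C_m(H,K)$). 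Joint measurability of $(s,x)\mapsto P_s[\psi](x)$ and of $(s,x)\mapsto \nabla^G P_s[\psi](x)$ on $B_m(H)$ (the modified \ref{hp4:Psemigroupell}(ii),(v)) together with the exponential bounds in (iii),(vi) let one invoke dominated convergence to conclude $\Upsilon_1[u,v]\in C_m(H)$ and $\Upsilon_2[u,v]\in C_m^s(H,K)$. This shows $\Upsilon$ maps the target space into itself whenever $\lambda>a\vee a_G$.

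Steps 2 and 3 then carry over unchanged: the Lipschitz estimate \ref{hp4:F0ell}(ii) combined with the growth bounds on $|P_s|_{\call(B_m(H))}$ and $|\nabla^G P_s|_{\call(B_m(H),C_m^s(H,K))}$ yield, after integration, a contraction constant of the form $\alpha(\lambda)=L\bigl[\tfrac{C}{\lambda-a}+\int_0^{+\infty}e^{-(\lambda-a_G)s}\gamma_G(s)\,ds\bigr]$, strictly decreasing to $0$ as $\lambda\to+\infty$, so for $\lambda$ larger than some $\lambda_0\in\R$ the map $\Upsilon$ is a contraction and admits a unique fixed point $(\overline u,\overline v)$. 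Applying Proposition \ref{intder} to the pointwise representation of $\Upsilon_1[\overline u,\overline v]$ shows that $\overline u$ is $G$-Gateaux (resp.\ $G$-Fr\'echet) differentiable with $\nabla^G\overline u=\overline v$, whence $\overline u\in\calg_m^{1,G}(H)$ (resp.\ $C_m^{1,G}(H)$) is a mild solution. Uniqueness follows as before: any other mild solution $u^*$ produces a fixed point $(u^*,\nabla^G u^*)$, which by the contraction property must coincide with $(\overline u,\overline v)$. The main (minor) obstacle is confirming that dropping continuity of $F_0$ does not spoil the Bochner integrability of $s\mapsto P_s[\psi]$ and $s\mapsto \nabla^G P_s[\psi]$, a point that is handled precisely by the joint measurability and growth clauses in the modified semigroup assumption.
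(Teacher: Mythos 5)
Your proposal is correct and follows exactly the route the paper takes: the paper's own proof simply says the result follows by the same fixed-point arguments as Theorem \ref{th4:exunHJBgeneralell} with the spaces appropriately chosen, and your write-up fills in precisely the one point that changes, namely that $\psi=F_0(\cdot,u(\cdot),v(\cdot))$ is now only in $B_m(H)$ and the strong Feller hypothesis restores the regularity of $P_s[\psi]$ needed for Steps 1--3. No gaps.
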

\begin{proof} It follows by the same arguments of the proof of  Theorem  \ref{th4:exunHJBgeneralell}, once we appropriately choose the spaces
where we apply the fixed point argument.
\end{proof}

 \begin{Remark}
\label{rm4:Fellerbis}
\begin{itemize}
\item[]
\item[(i)]
In \cite{DaPratoZabczyk02} and in \cite{Cerrai01} the authors
prove existence and uniqueness of the mild solution, in the case $G=I$,
performing the fixed point theorem in a different product space. Basically, with respect to ours, {the product space considered in the aforementioned references} is more regular in the first component (uniformly continuous functions) and less regular in the second component (bounded and measurable functions).
{We notice that, also in our case of general $G$, it is possible} to prove a version of Theorems \ref{th4:exunHJBgeneralell} and \ref{Th:strongfeller} {in the latter product space}.

\item[(ii)] If $F_0$ is only measurable and the strong Feller property does not hold, one can define the concept of mild solution in spaces of measurable functions and prove  results similar to the ones of Theorem \ref{th4:exunHJBgeneralell}.
\end{itemize}
\end{Remark}

\section{$G$-smoothing properties of transition semigroups}\label{S4:SMOOTHING}
In this section we provide special cases of transition semigroups satisfying Assumption \ref{hp4:Psemigroupell} or \ref{hp4:Psemigroupellbis}.
We will deal with the case when $(P_s)_{s\geq 0}$ is defined through the solution of an SDE  in a
  filtered probability space  $(\Omega, \mathcal{F}, (\mathcal{F}_t)_{t\geq 0}, \P)$ satisfying the usual conditions. Throughout the section $H,\Xi, K$ are real separable Hilbert spaces and $(W_t)_{t\geq 0}$ is a  cylindrical Wiener process $W$ with values in $\Xi$ defined in  the filtered probability space above.

Let  $\xi\in L^p(\Omega,\calf_0,\P;H)$ for some $p\geq 0$ and consider the SDE
\begin{equation}
\left\{
\begin{array}
[c]{ll}
dX\left(  s\right)  =\left[  AX\left(  s\right)
+b\left(X\left(s\right)\right)  \right]  ds
+\sigma\left(X\left( s\right)\right)  dW\left(s\right),
 & s\geq t, \\
X\left(  0\right)  =\xi.
\end{array}
\right.  \label{eq4:SDEformalellbis}
\end{equation}
\begin{Assumption}\label{ch1:ipotesiunobis}
\item[]
\begin{enumerate}[(i)]
  \item  $A:\cald(A)\subseteq H\rightarrow H$ is a closed densely defined (possibly unbounded) linear operator generating a $C_0$-semigroup of linear operators  $(e^{sA})_{s \ge 0}\subseteq \call(H)$.
  \item
The map  $b:H\rightarrow H$ is Lipschitz continuous: there exists $L\geq 0$ such that
\begin{equation}
\label{eq:bb0b}
  |b(x)-b(y) |\leq L
 |x-y|, \qquad \  \forall x,y\in H.
\end{equation}

  \item The map $\sigma: H \to \mathcal{L}(\Xi,H)$ is such that
  \begin{itemize}
  \item[-]
  for every $v\in \Xi$ the map  $(s,x)\mapsto e^{sA}\sigma(x)v$ is measurable;
  \item[-] $e^{sA}\sigma(x)\in \mathcal{L}_2(\Xi,H)$ for every $s> 0$, $x\in H$;
   \item[-] there  exists  $f\in L^2_{loc}([0,+\infty), \mathbb{R})$ such that
\begin{equation}\label{primaipotesisug1}
|e^{sA}\sigma(x)|_{\mathcal{L}_2(\Xi,H)}\leq  f(s)
  (1+|x|), \ \ \ \forall s> 0, \ \forall x\in H,
  \end{equation}
  and
  \begin{equation}
  |e^{sA}(\sigma(x)-\sigma(y))|_{\mathcal{L}_2(\Xi,H)}\leq
  f(s)|x-y|, \ \ \ \ \forall s> 0, \ \forall x,y\in H.
  \label{primaipotesisug2}
\end{equation}
\end{itemize}
\end{enumerate}
\end{Assumption}
We have the following classical result (see \cite[Th.\,7.5,\, 9.1,\,9.14]{DaPratoZabczyk14}, \cite[Sec.\,3.10]{GawareckiMandrekar10}), and \cite{Cosso15}.

\begin{Theorem}\label{th1:exSDEmult}
Let Assumption \ref{ch1:ipotesiunobis} hold. Then,
 for every $\xi\in L^p(\Omega,\calf_t,\P;H)$, with $p\geq 2$, SDE \eqref{eq4:SDEformalellbis}
 admits a unique mild solution in the space $\calh^{p,loc}_\calp(H)$, that is there exists a unique (up to modification) process in $\calh^{p,loc}_\calp(H)$, denoted by $X(\cdot,\xi)$, such that
\begin{equation}
X\left(t,\xi\right)  =e^{tA} \xi+ \int_0^te^{(t-s)A} b\left(X\left(s,\xi\right)\right)  ds
+\int_0^te^{(t-s)A}\sigma\left(X\left( s,\xi\right)\right)  dW\left(s\right), \ \ \ \forall t \geq 0.  \label{eq4:SDEformalellbisinteo}
\end{equation}
Such solution is a time-homogeneous Markov process and, for each $p>0$, there exist constants $C_p>0$ and $\alpha_p\in\R$ (see Remark \ref{rem:est} below) such that
\begin{equation}\label{eq:growXbeforeter}
\E[|X(t,\xi)|^p]\leq C_pe^{\alpha_p t} (1+\E|\xi|^p), \ \ \  \ \forall t\geq 0, \ \ \ \forall \xi\in L^p(\Omega,\calf_0,\P;H),
\end{equation}
\begin{equation}\label{eq:growXbeforeterbis}
\E[|X(t,\xi)-X(t,\eta)|^p]\leq C_pe^{\alpha_p t} \E|\xi-\eta|^p), \ \ \  \forall t\geq 0, \ \ \forall \xi, \eta\in L^p(\Omega,\calf_0,\P;H),
\end{equation}
\begin{equation}\label{eq:growXbeforetertris}
\E[|X(t,\xi)-\xi|^2]\leq C_2 (t+\E|e^{tA}\xi-\xi|), \ \ \  \forall t\in[0,1], \ \forall \xi\in L^p(\Omega,\calf_0,\P;H).
\end{equation}
%
Finally, if there exists $\gamma \in (0,1/2)$ such that
\begin{equation}\label{eq4:forcontdiffcoeff}
    \int_0^1 s^{-2\gamma}f^2(s)ds<+\infty,
\end{equation}
then (a version of) $X(\cdot,x)\in \mathcal{K}_\calp^{p,loc}(E)$ for each $p\geq 2$.
\end{Theorem}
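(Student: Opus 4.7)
The plan is to prove each claim by a standard fixed-point construction in the space $\calh_\calp^{p,T}(H)$ for short $T$, then patching, and finally deriving the quantitative bounds via Gronwall and the continuity claim via the Da Prato--Kwapień--Zabczyk factorization method. Specifically, for $\xi\in L^p(\Omega,\calf_0,\P;H)$ with $p\geq 2$ and $T>0$, I would define on $\calh_\calp^{p,T}(H)$ the map
$$
\Phi(Y)(t) := e^{tA}\xi + \int_0^t e^{(t-s)A} b(Y(s))\, ds + \int_0^t e^{(t-s)A}\sigma(Y(s))\, dW(s).
$$
By Assumption \ref{ch1:ipotesiunobis}, the drift integral is Bochner-well-defined in $H$ (using Lipschitzianity of $b$ and hence its linear growth), while the stochastic convolution is well-defined as an $H$-valued Itô integral because $s\mapsto e^{(t-s)A}\sigma(Y(s))$ lies in $\call_2(\Xi,H)$ with $\call_2$-norm controlled by $f(t-s)(1+|Y(s)|)$ through \eqref{primaipotesisug1}.

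To obtain the contraction, I would combine Minkowski's inequality on the drift with the Burkholder--Davis--Gundy inequality (for $p>2$; the Itô isometry suffices for $p=2$) applied to the stochastic convolution, using \eqref{primaipotesisug2} to get
$$
\sup_{t\in[0,T]}\E\left|\Phi(Y_1)(t)-\Phi(Y_2)(t)\right|^p \le C_p\!\left( L^p T^p + \Bigl(\int_0^T f^2(s)\,ds\Bigr)^{p/2}\right) \sup_{t\in[0,T]} \E|Y_1(t)-Y_2(t)|^p.
$$
Choosing $T_0$ sufficiently small this is a strict contraction, producing the unique local mild solution on $[0,T_0]$; iteration extends it to $[0,+\infty)$ and yields uniqueness in $\calh_\calp^{p,loc}(H)$. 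The estimates \eqref{eq:growXbeforeter}--\eqref{eq:growXbeforeterbis} follow by applying the same BDG and linear/Lipschitz growth bounds directly to the fixed-point equation \eqref{eq4:SDEformalellbisinteo} and invoking the (generalized) Gronwall lemma, which accommodates the $L^2_{loc}$ integrand $f^2$. For \eqref{eq:growXbeforetertris}, I would write
$$
X(t,\xi)-\xi = \bigl(e^{tA}\xi-\xi\bigr) + \int_0^t e^{(t-s)A} b(X(s,\xi))\,ds + \int_0^t e^{(t-s)A}\sigma(X(s,\xi))\,dW(s),
$$
and bound the deterministic integral by $Ct(1+\E|\xi|^2)^{1/2}$ using linear growth of $b$ together with \eqref{eq:growXbeforeter}, and the stochastic integral by $C\bigl(\int_0^t f^2(s)\,ds\bigr)(1+\E|\xi|^2)$, which is $O(t)$ on $[0,1]$ because $f^2\in L^1_{loc}$.

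The time-homogeneous Markov property is then a consequence of pathwise uniqueness: since the coefficients do not depend on time, the flow property $X(t+s,\xi) = X(s, X(t,\xi))$ (composition at the level of solutions) together with the $\calf_t$-measurability of $X(t,\xi)$ and independence of the Brownian increments after $t$ gives the Markov property in the standard way. Finally, for the continuity of trajectories under \eqref{eq4:forcontdiffcoeff}, I would apply the factorization identity $(t-s)^{-1}=\tfrac{\sin\pi\gamma}{\pi}\int_s^t(t-r)^{\gamma-1}(r-s)^{-\gamma}dr$: by stochastic Fubini,
$$
\int_0^t e^{(t-s)A}\sigma(X(s,\xi))\,dW(s) = \frac{\sin\pi\gamma}{\pi}\int_0^t (t-r)^{\gamma-1} e^{(t-r)A}\,Y(r)\,dr,
$$
where $Y(r):= \int_0^r (r-s)^{-\gamma} e^{(r-s)A}\sigma(X(s,\xi))\,dW(s)$. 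Condition \eqref{eq4:forcontdiffcoeff} ensures $Y\in L^p(\Omega\times[0,T];H)$, the kernel $(t-r)^{\gamma-1}$ is integrable for $\gamma\in(0,1/2)$, and a standard lemma on fractional convolutions then provides a continuous modification of the stochastic convolution. Since the deterministic part $e^{tA}\xi$ and the Bochner integral $\int_0^t e^{(t-s)A}b(X(s,\xi))ds$ are automatically continuous in $t$, this gives a continuous version of $X(\cdot,\xi)$ in $\calk_\calp^{p,loc}(H)$.

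The main obstacle I anticipate is handling the $L^p$-norm of the stochastic convolution when $p>2$ and $f$ is only locally $L^2$: the naive Itô isometry is not enough, and one must invoke the BDG inequality for Hilbert-space-valued martingales combined with the Hilbert--Schmidt interpretation of $e^{(t-s)A}\sigma$ to carry the estimates through the fixed-point argument and, in particular, to justify the factorization step cleanly.
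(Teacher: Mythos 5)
The paper does not actually prove this theorem: it is quoted as a classical result with references to Da Prato--Zabczyk, Gawarecki--Mandrekar and Cosso, supplemented by Remark \ref{rem:est} which explains how to upgrade the finite-horizon estimates of those references to the global exponential-in-time form. Your proposal reconstructs the standard proof behind those references (Banach fixed point in $\calh_\calp^{p,T}(H)$ for small $T$, patching, BDG plus the Hilbert--Schmidt bounds \eqref{primaipotesisug1}--\eqref{primaipotesisug2}, flow property for the Markov claim, and the factorization method for path continuity), and at that level the architecture is sound.

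There is, however, one genuine gap: the derivation of the global bounds \eqref{eq:growXbeforeter}--\eqref{eq:growXbeforeterbis} in the specific form $C_p e^{\alpha_p t}$ for \emph{all} $t\ge 0$. Your plan is to apply a generalized Gronwall lemma directly to the fixed-point equation on $[0,+\infty)$. But the kernel appearing in that Gronwall inequality is the convolution kernel $f^2(t-s)$ with $f$ merely in $L^2_{loc}$, together with the semigroup bound $Me^{\omega t}$; Assumption \ref{ch1:ipotesiunobis} places no control on the growth of $\int_0^t f^2(s)\,ds$ as $t\to\infty$, so a one-shot Gronwall argument only yields, for each finite $T$, a constant $C_{p,T}$ with no a priori exponential dependence on $T$. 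The correct route --- the one the paper records in Remark \ref{rem:est} --- is to first prove the uniform estimate $\E|X(t,\xi)|^p\le c_p(1+\E|\xi|^p)$ on the unit interval $[0,1]$, and then iterate it over the intervals $(n,n+1]$ using time-homogeneity and the Markov property, $\E|X(t,\xi)|^p=\E|X(t-n,X(n,\xi))|^p$, which produces the geometric growth $c_p^{n+1}$ and hence the exponential form. The same remark applies to \eqref{eq:growXbeforeterbis}. Your treatment of \eqref{eq:growXbeforetertris}, of the Markov property, and of the factorization argument for $\calk_\calp^{p,loc}$ is otherwise in line with the cited sources (for the factorization step one should also note that the kernel $(t-r)^{\gamma-1}$ forces $p>1/\gamma$ in the first instance, the remaining values of $p$ being recovered afterwards from the moment bounds).
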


\begin{Remark}\label{rem:est}
The estimates \eqref{eq:growXbeforeter}--\eqref{eq:growXbeforetertris} are stated for $p\geq 2$ and for finite horizon in \cite{DaPratoZabczyk14, GawareckiMandrekar10}. They can be extended to the case $p\in(0,2)$ by Jensen's inequality. Moreover, the exponential dependence in time of the estimates can be proved by an induction argument  starting from the estimate holding for fixed time horizon $T>0$ and  exploiting the time-homogeneity of the SDE.
Here we briefly describe the argument focusing on \eqref{eq:growXbeforeter}. By the aforementioned references we know that
\begin{equation}\label{eq:growXbeforeterq}
\E[|X(t,\xi)|^p]\leq c_p (1+\E|\xi|^p), \ \ \  \ \forall t\in[0,1], \ \ \ \forall \xi\in L^p(\Omega,\calf_0,\P;H).
\end{equation}
On the other hand, by  time-homogeneity of \eqref{eq4:SDEformalellbisinteo} and Markov property we have
$$\E\left[|X(t,\xi)|^p\right] =\E \left[|X(t-1, X(1,\xi))|^p\right], \ \ \forall t\in(1,2].$$
It follows
\begin{align*}
\E\left[|X(t,\xi)|^p\right] \leq c_p +c_p^2(1+\E|\xi|^p)), \ \ \forall t\in(1,2].
\end{align*}
Arguing by induction we get
\begin{align*}
\E\left[|X(t,\xi)|^p\right] &\leq (c_p +\ldots+ c_p^{n+1})+ c_p^{n+1}\E|\xi|^p)
\\&\leq n(c_p\vee 1)^{n+1} +c_p^{n+1}\E|\xi|^p
\\& = ne^{(n+1)\log (c_p\vee 1)}+e^{(n+1)\log c_p}\E|\xi|^p, \ \ \ \ \ \forall t\in(n,n+1], \ \forall n\in\N.
\end{align*}
Then \eqref{eq:growXbeforeter} follows by suitably defining $C_p$ and $\alpha_p$.
\end{Remark}


By Theorem \ref{th1:exSDEmult},  we see that, under Assumption \ref{ch1:ipotesiunobis}, the formula
\begin{equation}\label{trsemm}
P_{s}[\phi](x)\coloneqq \E[\phi (X(s,x))], \qquad  s \ge 0,
\end{equation}
defines a one parameter transition semigroup  $(P_s)_{s\geq 0}$ in the space $C_m(H)$ for every $m\geq 0$,
satisfying
Assumptions \ref{hp4:Psemigroupell}(i)--(iii) (hence,  Assumption \ref{hp4:Psemigroupellbis}(i)).
We are now going to study some special cases for which also the rest of  Assmuption \ref{hp4:Psemigroupell} or \ref{hp4:Psemigroupellbis} are satisfied by $(P_s)_{s\geq 0}$ defined in \eqref{trsemm}.

\subsection{The case of the Ornstein-Uhlenbeck semigroup}
\label{SS4:SMOOTHINGOU}
Let  $\mu\in H$ and $Q\in \call_1^+(H)$. We denote by
 $\mathcal{N}\left(\mu ,Q\right)$
 the Gaussian measure in $H$ with mean $\mu$ and covariance operator $Q\in \call_1^+(H)$ in the space $H$ (see  \cite[Ch.\,1]{DaPratoZabczyk02}). When $\mu=0$, we use the notation
 $\mathcal{N}\left(0,Q\right)=
\mathcal{N}_Q$.

Let us consider  the Ornstein-Uhlenbeck process, i.e. let us consider the special case when SDE  \eqref{eq4:SDEformalell} takes the form
\begin{equation}
\begin{cases}
dX(s)=AX(s) ds
+ \sigma dW(s), \ \ \ s\geq 0,\\
\label{eq4:SE1ellbis}
X(0)=x.
\end{cases}
\end{equation}
We deal under the following assumption.
\begin{Assumption}
\label{hp4:ABQforOU}
\begin{enumerate}
\item[]
\item[(i)] The linear operator $A$ is the generator of a $C_0$-semigroup $(e^{t A})_{t\geq 0}  $ in the Hilbert space $H$.

\item[(ii)]
 $\sigma\in \call(\Xi,H)$, \ $e^{sA}\sigma\sigma^{\ast}e^{sA^{\ast}}\in \call_1(H)$ for all $s >0$,
and
$$
\int_{0}^{t} {\rm Tr}\left[ e^{sA}\sigma\sigma^{\ast}e^{sA^{\ast}}\right]ds
<+\infty, \ \ \ \ \forall t\geq 0.
$$
\end{enumerate}
\end{Assumption}
We recall (see, e.g.,  \cite[Ch.\,II,\,Th.\,1.10]{EngelNagel99}) that, if  $(e^{tA})_{t\geq 0}$ is a $C_0$-semigroup in $H$, then there exist constants   $M\geq 1$ and $\omega\in \R$ such that
\begin{equation}\label{stimasem}
|e^{tA}|_{\call(H)}\leq Me^{\omega t}, \ \ \ \forall t\geq 0.
\end{equation}

Assumption \ref{hp4:ABQforOU} enables us, in particular, to   apply  Theorem \ref{th1:exSDEmult}
 and get the existence of a unique mild solution to equation \myref{eq4:SE1ellbis}
given by
\begin{equation}\label{OU}
X(t,x)=e^{tA}x+ \int_0^t e^{(t-s)A}\sigma dW(s), \ \ \ t\geq 0.
\end{equation}
For the study of the properties of this process one can see \cite[Ch.\,5 and 9]{DaPratoZabczyk14}.
Let us denote  by $R_t$ the associated one parameter transition semigroup, i.e.
\begin{equation}\label{eq4:defRt}
R_{t}[\phi](x)
:=\E\left[\phi(X(t,x))\right]=
\E\left[\phi(e^{tA}x+W_A(t))\right], \ \ \phi\in B_m(H),
\end{equation}
where
$$W_A(t)=\int_0^t \ets \sigma dW(s).$$
Under Assumption \ref{hp4:ABQforOU}, one can define the following operator as Bochner integral in the separable Hilbert space $\call_1(H)$:
\begin{equation}\label{eq4:defQtnew}
    Q_t:H \to H, \qquad
    Q_{t}:=\int_{0}^{t}e^{sA}\sigma\sigma^{\ast}e^{sA^{\ast}}ds,
\end{equation}
Clearly $Q_t$ is nonnegative, i.e. $Q_t\in \call_1^+(H)$  for every $t\geq0$.
 It turns out that the law of $W_A(t)$  is $\caln_{Q_t}$, see \cite[Th.\,5.2]{DaPratoZabczyk14}.
Then,
 we can write
\begin{equation}\label{eq4:defRtbis}
R_{t}[\phi](x)
= \int_{H}\phi(  e^{tA}x+  y)
\mathcal{N}_{Q_{t}}(dy).
\end{equation}
As we have observed, as a general consequence of Theorem \ref{th1:exSDEmult}, Assumptions \ref{hp4:Psemigroupell}(i)--(iii) (hence,  Assumption \ref{hp4:Psemigroupellbis}(i)) are automatically satisfied by $(R_t)_{t\ge 0}$.
 We are going to provide conditions
that guarantee the last three properties of Assumption  \ref{hp4:Psemigroupellbis}. Before, we provide the following example, serving as a key motivation for our approach (see the discussion at the beginning of Section \ref{sec:sem}).
\begin{Example} (see also \cite[Ex.\,6.1]{Cerrai94}) \label{OU-notmes}
Let $H=\R$, $A,\sigma\in \R$ with $A\neq 0$, and consider the semigroup $(R_t)_{t\geq 0}$ in the space $C_b(\R)$ associated to $A,\sigma$. We claim (and prove) the following.
\begin{enumerate}[(i)]
\item Let $\varphi(x)\coloneqq \sin(x)$. The map $[0,+\infty) \to C_b(H)$, $t\mapsto  R_t[\varphi]$ is not continuous at any $t\geq 0$.
\item There exists $\phi\in C_b(\R)$ such that the map $[0,T]\to C_b(H)$, $t\mapsto R_t\phi$ is not measurable (in the sense of Bochner integration, i.e. it is not the limit of finite valued functions) for any $T>0$.
\end{enumerate}

\emph{Proof of (i).}
%
Let $i$ denote the imaginary unit. For every $t,s\geq 0$ we can write
\begin{eqnarray*}
\int_{\R} e^{i(e^{At}x+y)} N_{Q_t}(dy)-\int_{\R} e^{i(e^{As}x+y)} N_{Q_t}(dy) &=& e^{ie^{At}x} \left(\int_{\R} e^{iy}\caln_{Q_t}(dy)-\int_{\R} e^{iy}\caln_{Q_s}(dy)\right)\\
&&+ \left(e^{ie^{At}x}- e^{ie^{As}x}\right) \int_{\R} e^{iy}\caln_{Q_s}(dy)\\
&=& e^{ie^{At} x} \left(e^{-\frac{1}{2}Q_t^2}-e^{-\frac{1}{2}Q_s^2}\right)+
 \left(e^{ie^{At}x}- e^{i^{As}x}\right) e^{-\frac{1}{2}Q_s^2}.
\end{eqnarray*}
Performing the same computation with $-i$ in place of $i$ and using the identity $\sin \alpha =\frac{e^{i\alpha}-e^{-i\alpha}}{2i}$, we get
$$
R_t[\varphi](x)-R_s[\varphi](x)= \sin (e^{At}x) \left(e^{-\frac{1}{2}Q_t^2}-e^{-\frac{1}{2}Q_s^2}\right)+
\left(\sin (e^{At}x)- \sin (e^{As}x)\right) e^{-\frac{1}{2}Q_s^2}.
$$
Now take $t\to s$ above. The first addend in the right hand side goes to $0$ uniformly in $x\in \R$, whereas the second addend does not do so. Hence, the claim follows.

\emph{Proof of (ii).}
We know that for every $T>0$ there exists $M\geq 1$ such that $|R_t|_{\call(C_b(\R))}\leq M$ for every $t\in [0,T]$. Assume, by contradiction, that the function $[0,T]\to C_b(\R)$, $t\mapsto R_t[\phi]$ is measurable for every $\phi\in C_b(\R)$. Then
\begin{enumerate}
\item  the semigroup $(R_t)_{t\geq 0}$ is weakly measurable, i.e. the map $[0,+\infty)\to\R$, $t\mapsto \langle R_t\phi,\phi^*\rangle_{C_b(\R),C_b(\R)^*}$ is measurable for each couple $(\phi,\phi^*)\in   C_b(\R)\times C_b(\R)^*$;
\item the set $\{R_t[\phi],\ t\in [0,T]\}$ is separable in $C_b(\R)$ for each $\phi\in C_b(\R)$, as $R_\cdot[\phi]$ is the limit of functions with finite values in $C_b(\R)$.
\end{enumerate}
Then, applying   \cite{Dunford38}, we contradict item (i).
\end{Example}

 Throughout the rest of this subsection,  we consider  maps $G:H\to \call_u(K,H)$ that are constant ($G(\cdot)\equiv G_0\in \call_u(K,H)$). Clearly these maps satisfy Assumptions \ref{assG} and \ref{assGbis}. With a slight abuse of notation, we will confuse $G$ and $G_0$.
We  consider  the following assumption, introduced first in \cite{Zabczyk85} in the case $G=I$, which guarantees the $G$-differentiability of $x\mapsto R_t[\phi](x)$.
\begin{Assumption}
\label{hp4:NCG} Let  $G:\cald(G)	\subseteq K \to H$ be a closed densely defined (possibly unbounded) linear operator.
\begin{enumerate}[(i)]
\item
If $G$ is bounded, we assume
\begin{equation}
e^{tA}G(K)\subseteq Q_{t}^{1/2}(H), \qquad \forall t > 0.
\label{eq4:ornsteininclusionebdd}
\end{equation}
\item
If $G$ is unbounded, we assume that for all $t>0$ the operator $e^{tA}G:\cald(G)\subseteq K\to H$ can be extended to
a  $\overline{e^{tA}G}\in\call(K,H)$ such that
\begin{equation}
\overline{e^{tA}G}(K)\subseteq Q_{t}^{1/2}(H), \qquad \forall t > 0.
\label{eq4:ornsteininclusioneunbdd}
\end{equation}
\end{enumerate}
\end{Assumption}

\begin{Remark}
\label{rm4:NC1}
When $K=H$ and $G=I$, Assumption \ref{hp4:NCG} is equivalent to require that the deterministic control system in $H$
 \begin{equation}
\label{eq4:detsystNC}
z'(s)=Az(s) +\sigma u(s),\quad z (0)=z_0\in H.
\end{equation}
is {\em null controllable} from every initial datum $z_0 \in  H$; that is,  for every $t>0$ and $z_0 \in H$,
there exists a control $u(\cdot)\in L^2([0,t],\Xi)$ such that $z(t;z_0,u(\cdot))=0$.
In terms of operators, as
$$
z(t)=e^{tA}x +\int_0^t e^{(t-s)A}\sigma u(s)ds,
$$
denoting by $\call_t $ the operator
$$
\call_t:L^2([0,t],\Xi)\to H, \qquad \call_t u\coloneqq\int_0^t e^{(t-s)A}\sigma u(s)ds,
$$
the null controllability for an initial datum $z_0 \in  H$ corresponds to
$e^{tA}z_0 \in \call_t(L^2([0,t],\Xi)).$
Then , the equivalence aforementioned  follows  from the fact that,
after some computations, one gets
$$
|\call^*_t x|^2=\<Q_t x,x\>=|Q_t^{1/2} x|^2, \quad x \in H,
$$
therefore (see \cite[Cor.\,B.7]{DaPratoZabczyk14})
\begin{equation}\label{eq4:imQt=imLt}
\call_t(L^2[0,t],\Xi))=Q_t^{1/2}(H).
\end{equation}
When $G$ is bounded, in view of what we said above, Assumption \ref{hp4:NCG} is equivalent to ask that system \myref{eq4:detsystNC} is null controllable for every initial datum $z_0 \in G(K)\subseteq H$ (see also  \cite[Sec.\,3.1]{Masiero05}).

When $G$ is unbounded, we may consider Assumption \ref{hp4:NCG} as a null controllability assumption
for the extension of system \myref{eq4:detsystNC} to a suitable extrapolation
space(see e.g. \cite[Sec.\,II.5]{EngelNagel99}).

Finally, when $G=I$, if \myref{eq4:ornsteininclusionebdd}
holds for a given $t_0>0$, it must hold for all $t>t_0$, as $e^{tA}{{(H)}}$ does not increase in $t$ (by the semigroup property), whereas
$Q_t^{1/2}(H)$ does not decrease  (by \myref{eq4:imQt=imLt}\footnote{It is indeed constant when $G=I$ and \myref{eq4:ornsteininclusionebdd} holds, see
e.g. \cite[Th.\,B.2.2]{DaPratoZabczyk02}}).
This is not ensured when $G\not=I$.
\end{Remark}\vspace{3mm}

If Assumption \ref{hp4:NCG}  holds, the operator
\begin{equation}\label{eq4:defGammaG}
\Gamma_G(t):K\to H, \  \  \ \Gamma_G(t)\coloneqq Q_{t}^{-1/2}e^{tA}G
\qquad \hbox{(or $Q_{t}^{-1/2}\overline{e^{tA}G}$ when $G$ is unbounded)},
\end{equation}
where $Q_{t}^{-1/2}$ is the pseudoinverse of $Q_{t}^{1/2}$, is well defined for all $t>0$. Moreover, it is possible to check that it is bounded by the closed graph theorem, so it belongs to $\call(K,H)$.
When $K=H$ and $G=I$, we simply write $\Gamma(t):= Q_{t}^{-1/2}e^{tA}$.

\smallskip

We begin recalling a classical result concerning the case when  Assumption \ref{hp4:NCG} holds with $K=H$ and  $G=I$ (see
  \cite[Th.\,6.2.2 and Ex.\,6.3.3]{DaPratoZabczyk02}).

\begin{Theorem}
\label{th4:regOU}
Let Assumptions \ref{hp4:ABQforOU} and \ref{hp4:NCG} hold with $K=H$ and $G=I$. Then,  for each $\phi \in  B _b (H)$ and   $t > 0$, we have
$R_t [\phi] \in UC^{\infty }_b (H)$(\footnote{$UC^{\infty }_b (H)$ denotes the space of uniformly continuous and bounded functions on $H$ having Fr\'echet derivatives of each order, all uniformly continuous and bounded as well.})
In particular, for each $k,h,x\in H$ and $t>0$, we have
\begin{equation}
\label{e6.3.8}
\langle D R_t [\phi ](x), k \rangle=\int_H \langle \Gamma(t) k,
Q^{-1/2}_t y \rangle \phi (e^{tA}x + y) \mathcal{N}_{Q_{t}}(dy),
\end{equation}
and
\begin{equation}
|DR_{t}[\phi] (x)|\le |\Gamma(t)|_{\call(H)}\;|\phi |_{0}.
\end{equation}

Conversely, if  Assumption \ref{hp4:ABQforOU} holds with $K=H$ and $G=I$ and
$R_{t}[\phi ]\in C_{b}(H)$ for each $\phi \in  B _{b}(H)$
and $t>0$,  then  \myref{eq4:ornsteininclusionebdd} is satisfied (with $K=H$ and $G=I$).
\end{Theorem}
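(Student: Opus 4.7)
The plan is to prove the forward direction using the Cameron–Martin formula for Gaussian measures, and the converse using the Feldman–Hajek dichotomy. Write $R_t[\phi](x) = \int_H \phi(e^{tA}x + y)\,\caln_{Q_t}(dy)$; the null controllability hypothesis $e^{tA}(H)\subseteq Q_t^{1/2}(H)$ is precisely what lets us shift the argument of $\phi$ by increments of the form $e^{tA}hk$ while remaining in the Cameron–Martin space of $\caln_{Q_t}$. Concretely, for any $a\in Q_t^{1/2}(H)$, the classical Cameron–Martin theorem (see, e.g., \cite[Th.\,2.8]{DaPratoZabczyk02}) gives
\begin{equation*}
\caln_{Q_t}(dy - a) \;=\; \exp\!\Bigl(\langle Q_t^{-1/2}a,\,Q_t^{-1/2}y\rangle - \tfrac12|Q_t^{-1/2}a|^2\Bigr)\,\caln_{Q_t}(dy).
\end{equation*}

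First I would apply this formula with $a = e^{tA}(hk) = h\,Q_t^{1/2}\Gamma(t)k$ (which is admissible by Assumption \ref{hp4:NCG}) to obtain
\begin{equation*}
R_t[\phi](x+hk) \;=\; \int_H \phi(e^{tA}x + y)\,\exp\!\Bigl(h\langle \Gamma(t)k,Q_t^{-1/2}y\rangle - \tfrac{h^2}{2}|\Gamma(t)k|^2\Bigr)\,\caln_{Q_t}(dy).
\end{equation*}
Differentiating the exponential in $h$ at $h=0$ and justifying the interchange of limit and integral by dominated convergence (using boundedness of $\phi$ and the Gaussian integrability of polynomial multiples of the exponential) yields formula \eqref{e6.3.8}. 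The norm bound then follows from Cauchy–Schwarz together with the isometry
\begin{equation*}
\int_H \langle \Gamma(t)k,\,Q_t^{-1/2}y\rangle^{2}\,\caln_{Q_t}(dy) \;=\; |\Gamma(t)k|_H^{2},
\end{equation*}
which is a consequence of the fact that $\Gamma(t)k\in\calr(Q_t^{-1/2})=(\ker Q_t^{1/2})^{\perp}$ and that $y\mapsto\langle a,Q_t^{-1/2}y\rangle$ is a centred Gaussian of variance $|a|^2$ on that subspace.

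Next, to obtain $R_t[\phi]\in UC_b^{\infty}(H)$, I would iterate: differentiating the Cameron–Martin density $n$ times produces an integrand of the form $\phi(e^{tA}x+y)\,H_n(\Gamma(t)k_1,\dots,\Gamma(t)k_n;y)$, where $H_n$ is a Hermite-type polynomial in $\langle \Gamma(t)k_i,Q_t^{-1/2}y\rangle$. Each such integral is well defined, bounded in terms of $|\phi|_0$ and $\prod_i|\Gamma(t)k_i|$ by the same Gaussian-moment argument, and continuous in $x$ by dominated convergence, giving Fréchet differentiability of every order together with the uniform bounds that force $R_t[\phi]$ and all its derivatives to be uniformly continuous. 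The main bookkeeping obstacle here is justifying the induction cleanly, but no new idea is required beyond the first step.

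For the converse, I would argue by contrapositive: suppose Assumption \ref{hp4:NCG} with $K=H,G=I$ fails at some $t>0$, i.e.\ there exists $x_0\in H$ with $e^{tA}x_0\notin Q_t^{1/2}(H)$. By the Feldman–Hajek theorem (see \cite[Th.\,2.23]{DaPratoZabczyk02}) the two Gaussian measures $\caln(e^{tA}x_0,Q_t)$ and $\caln(0,Q_t)$ are then mutually singular, so there is a Borel set $B\subseteq H$ with
\begin{equation*}
\caln(0,Q_t)(B)=1, \qquad \caln(e^{tA}x_0,Q_t)(B)=0.
\end{equation*}
Taking $\phi = \mathbf{1}_{B - e^{tA}x}$ for $x=0$ versus $x=x_0$ (or more carefully, using a sequence of shifts $x_n\to 0$ and singularity along a neighbourhood base, which is the main technical point) one produces a bounded measurable $\phi$ for which $R_t[\phi]$ fails to be continuous, contradicting the hypothesis. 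The delicate part here — the point I expect to require the most care — is turning a single pair of singular measures into a genuine discontinuity of $R_t[\phi]$ as a function of $x$; this is handled by exploiting that the set of $x$ with $e^{tA}x\in Q_t^{1/2}(H)$ is a proper linear subspace of $H$, hence meagre, so one can approximate any $x_0$ outside it by points inside and obtain jump behaviour of the indicator-type test function along the approximating sequence.
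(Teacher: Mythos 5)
The paper does not actually prove Theorem \ref{th4:regOU}: it is recalled as a classical result with a pointer to \cite[Th.\,6.2.2 and Ex.\,6.3.3]{DaPratoZabczyk02}. The closest in-paper argument is the proof of its generalization, Theorem \ref{th4:regOUG}, and your forward direction reproduces exactly that method: write $R_t[\phi](x+hk)$ as an integral against the shifted Gaussian $\mathcal{N}(he^{tA}k,Q_t)$, invoke Cameron--Martin (legitimate precisely because \eqref{eq4:ornsteininclusionebdd} puts $he^{tA}k$ in the Cameron--Martin space), differentiate the density at $h=0$ under the integral, and get the bound from Cauchy--Schwarz plus the Gaussian isometry $\int_H\langle \Gamma(t)k,Q_t^{-1/2}y\rangle^2\,\caln_{Q_t}(dy)=|\Gamma(t)k|^2$, which uses $\Gamma(t)k\in(\ker Q_t^{1/2})^\perp$ exactly as in the paper. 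The iteration to $UC^\infty_b(H)$ via Hermite-polynomial integrands is only sketched, but it is the standard bookkeeping and the uniform continuity of each derivative does follow from the boundedness of the next one; I see no gap of substance there.

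The one place where your sketch is actually wrong in direction is the converse. You propose to approximate a point $x_0$ with $e^{tA}x_0\notin Q_t^{1/2}(H)$ by points of the ``good'' subspace $S:=\{x: e^{tA}x\in Q_t^{1/2}(H)\}$; this requires $S$ to be dense, which is not guaranteed (e.g.\ if $Q_t$ has nontrivial kernel, $S$ can be contained in a proper closed subspace). The robust version goes the other way: $0\in S$ always, and since $S$ is a linear subspace, $x_n:=x_0/n\notin S$ for every $n$, so $x_n\to 0$ with $\caln(e^{tA}x_n,Q_t)\perp\caln_{Q_t}$ for all $n$. Choosing Borel sets $B_n$ with $\caln_{Q_t}(B_n)=1$ and $\caln(e^{tA}x_n,Q_t)(B_n)=0$ and setting $B:=\bigcap_n B_n$, the function $\phi=\mathbf{1}_B\in B_b(H)$ satisfies $R_t[\phi](0)=1$ and $R_t[\phi](x_n)=0$, contradicting $R_t[\phi]\in C_b(H)$. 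No meagreness/Baire-category argument is needed; closure of $S^c$ under nonzero scalar multiplication suffices. With that correction your proof is complete and consistent with the approach the paper takes for Theorem \ref{th4:regOUG}.
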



We are going to prove an analogous result for the case of $G$-derivatives
(with $G$ possibly unbounded),  generalizing the result of \cite[Lemma\,3.4]{Masiero05}. The latter result can be also found in
a slightly more general form in \cite{GozziMasiero12}.
First, we need two lemmas. In the following, the symbol  $[t]$ denotes the integer part of $t\in[0,+\infty)$.

\begin{Lemma}
\label{lm4:th4regOUG1} Let Assumption \ref{hp4:ABQforOU} hold and let $M\geq 1$, $\omega\in \R$ be as in \eqref{stimasem}.
Then
$$
{\rm Tr} [Q_t] \le {\rm Tr}[Q_{1}] M^2\frac{e^{2\omega ([t]+1)}-1}{e^{2\omega}-1},  \ \ \ \forall t\geq 0,
$$
with the agreement
$$\frac{e^{2\omega ([t]+1)}-1}{e^{2\omega}-1}\coloneqq [t]+1, \ \ \mbox{if} \ \omega=0.$$

\end{Lemma}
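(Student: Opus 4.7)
The plan is to exploit the semigroup property and the ideal property of the Hilbert--Schmidt norm to reduce the trace of $Q_t$ to the trace of $Q_1$ times a geometric sum.

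First, I would rewrite the trace as a Hilbert--Schmidt norm integral. Since $\sigma\sigma^\ast\geq 0$, for any orthonormal basis $\{e_n\}$ of $\Xi$ one has the identity
\begin{equation*}
\mathrm{Tr}\bigl[e^{sA}\sigma\sigma^\ast e^{sA^\ast}\bigr]=|e^{sA}\sigma|^2_{\mathcal{L}_2(\Xi,H)},
\end{equation*}
so Fubini combined with the definition \eqref{eq4:defQtnew} yields
\begin{equation*}
\mathrm{Tr}[Q_t]=\int_0^t |e^{sA}\sigma|^2_{\mathcal{L}_2(\Xi,H)}\,ds.
\end{equation*}

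Next, I would partition $[0,t]$ into the subintervals $[k,k+1]$ for $k=0,1,\dots,[t]-1$ and the remainder $[[t],t]\subseteq [[t],[t]+1]$. On each such subinterval, writing $s=k+r$ with $r\in[0,1]$, the semigroup property gives $e^{sA}\sigma=e^{kA}e^{rA}\sigma$, whence by the ideal property of the Hilbert--Schmidt norm and the standard growth bound \eqref{stimasem},
\begin{equation*}
|e^{sA}\sigma|^2_{\mathcal{L}_2(\Xi,H)}\leq |e^{kA}|^2_{\call(H)}\,|e^{rA}\sigma|^2_{\mathcal{L}_2(\Xi,H)}\leq M^2 e^{2\omega k}\,|e^{rA}\sigma|^2_{\mathcal{L}_2(\Xi,H)}.
\end{equation*}
Integrating each piece over $r\in[0,1]$ and bounding the $r$-integral by $\mathrm{Tr}[Q_1]$ yields
\begin{equation*}
\mathrm{Tr}[Q_t]\leq M^2\,\mathrm{Tr}[Q_1]\sum_{k=0}^{[t]} e^{2\omega k}.
\end{equation*}

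Finally, I would evaluate the geometric sum: for $\omega\neq 0$,
\begin{equation*}
\sum_{k=0}^{[t]} e^{2\omega k}=\frac{e^{2\omega ([t]+1)}-1}{e^{2\omega}-1},
\end{equation*}
while for $\omega=0$ it equals $[t]+1$, matching the stated convention. This delivers the claimed inequality. I do not foresee any genuine obstacle; the only point to be careful about is handling the (possibly short) last interval $[[t],t]$, but it is harmlessly dominated by integrating over the full interval $[0,1]$ at step $k=[t]$, which is exactly what the geometric sum accounts for.
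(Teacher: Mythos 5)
Your proof is correct and follows essentially the same route as the paper's: both split time into unit intervals, use the semigroup property together with the growth bound \eqref{stimasem} and a norm ideal property to reduce everything to $\mathrm{Tr}[Q_1]$, and then sum a geometric series. The only cosmetic difference is that the paper runs a recursion on $\mathrm{Tr}[Q_n]$ via the identity $Q_t=Q_{t-1}+e^{(t-1)A}Q_1e^{(t-1)A^*}$ and the trace-class ideal property, whereas you partition the integral $\int_0^t\lvert e^{sA}\sigma\rvert^2_{\mathcal{L}_2(\Xi,H)}\,ds$ directly and use the Hilbert--Schmidt ideal property.
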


\begin{proof}
%
Note that
$$
Q_t=Q_{t-1}+\int_{t-1}^{t}e^{sA}\sigma\sigma^*e^{sA^*}ds=Q_{t-1}
+e^{(t-1)A}Q_{1}e^{(t-1)A^*}, \ \ \ \forall t\geq 1,$$
Now, recall that  if $T\in\call_1(H)$ and $S\in \call(H)$, then $TS\in\call_1(H)$ and $|TS|_{\call_1(H)}\leq |T|_{\call_1(H)}|S|_{\call(H)}$ and that the trace is additive.  Then, setting $a_n\coloneqq \mbox{Tr}[Q_n]$, $n\in\N$, and $q\coloneqq\mbox{Tr}[Q_1]$, we get
$$a_0=0, \ \ a_n\leq a_{n-1}+qM^2e^{2\omega(n-1)} \ \forall n\in\N\setminus\{0\}.$$
Then,
$
a_n\leq qM^2 \sum_{k=1}^n e^{2\omega(k-1)}=qM^2\frac{e^{2\omega n}-1}{e^{2\omega}-1}
$ (with the agreement specified in the statement when $\omega=0$).
The claim follows simply observing that $t \le [t]+1$.
%
So, the claim  follows from
the fact that
 $t\mapsto {\rm Tr} [Q_t]$
is clearly increasing in $t$.
\end{proof}

The following result is an extension of Proposition 2.19 of
\cite{DaPratoZabczyk02} and of Lemma 3.1 of \cite{Cerrai95}.

\begin{Lemma}
\label{lm4:th4regOUG2}
Let Assumption \ref{hp4:ABQforOU} hold and let $M\geq 1$, $\omega\in \R$ be as in \eqref{stimasem}.
Then, for every $m\geq 0$, there exists a constant $\kappa>0$ such that
\begin{equation}
\int_H|y+e^{tA}x|^m\caln_{Q_{t}}(dy)
\le
\kappa (1+|x|^m) e^{m\omega t}, \ \ \ \forall t\geq 0, \ \forall x\in H, \qquad \mbox{if} \; \omega \ne 0,
\end{equation}
\begin{equation}\label{eq4:estimateintOUbis}
\int_H
|y+e^{tA}x|^m\caln_{Q_{t}}(dy)
\le
\kappa(1+|x|^m)(1+t^m), \ \ \ \forall t\geq 0, \ \forall x\in H,  \qquad \mbox{if} \; \omega = 0.
\end{equation}
\end{Lemma}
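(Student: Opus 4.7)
The plan is to decompose the integrand and reduce the bound to the behavior of $\mathrm{Tr}\,Q_t$, which is already controlled by Lemma \ref{lm4:th4regOUG1}. First I would use the elementary inequality
\[
|y+e^{tA}x|^m \;\leq\; c_m\bigl(|y|^m + |e^{tA}x|^m\bigr), \qquad c_m := 2^{(m-1)\vee 0},
\]
to split the integral into a Gaussian moment piece and a deterministic piece. The deterministic piece is immediate from \eqref{stimasem}: $|e^{tA}x|^m \leq M^m e^{m\omega t}|x|^m$, which already exhibits the right dependence on $x$ and the right exponential factor in $t$.

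The heart of the proof is then the Gaussian moment $I(t):=\int_H|y|^m\caln_{Q_t}(dy)$. For $m\in[0,2]$, Jensen's inequality gives at once
\[
I(t) \;\leq\; \Bigl(\int_H |y|^2\,\caln_{Q_t}(dy)\Bigr)^{m/2} \;=\; (\mathrm{Tr}\,Q_t)^{m/2}.
\]
For $m>2$ one invokes Fernique's theorem (equivalently, the equivalence of all $L^p$-norms for Gaussian random vectors in Hilbert space, see e.g.\ \cite[Ch.\,2]{DaPratoZabczyk02}), yielding $I(t)\leq c_m'\,(\mathrm{Tr}\,Q_t)^{m/2}$. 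In both regimes the Gaussian moment is thus controlled by a fixed power of the trace.

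Finally I would plug in Lemma \ref{lm4:th4regOUG1}. When $\omega>0$, one uses
\[
\frac{e^{2\omega([t]+1)}-1}{e^{2\omega}-1} \;\leq\; \frac{e^{2\omega}}{e^{2\omega}-1}\,e^{2\omega t},
\]
so $(\mathrm{Tr}\,Q_t)^{m/2}\leq \kappa_1 e^{m\omega t}$; adding the deterministic piece $M^m e^{m\omega t}|x|^m$ gives the first inequality. When $\omega=0$, Lemma \ref{lm4:th4regOUG1} gives $\mathrm{Tr}\,Q_t\leq \kappa_2(1+t)$, hence $(\mathrm{Tr}\,Q_t)^{m/2}\leq \kappa_3(1+t^m)$, and combining with $|e^{tA}x|^m\leq M^m|x|^m\leq M^m(1+|x|^m)(1+t^m)$ yields the second inequality. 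The case $\omega<0$ is the easiest, since the ratio $(e^{2\omega([t]+1)}-1)/(e^{2\omega}-1)$ stays bounded uniformly in $t$, so $I(t)$ is bounded and the bound $\kappa(1+|x|^m)e^{m\omega t}$ can be achieved after enlarging $\kappa$ (noting $e^{m\omega t}\leq 1$ makes the stated bound only meaningful on bounded intervals of $t$; on any such interval the bound is trivial, while the natural uniform bound $\kappa(1+|x|^m)$ is stronger).

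The only substantive obstacle is the Gaussian moment estimate when $m>2$, which requires Fernique's theorem in infinite-dimensional Hilbert space; everything else is bookkeeping that carefully tracks the $M$, $\omega$, and $t$ dependence coming out of \eqref{stimasem} and Lemma \ref{lm4:th4regOUG1}.
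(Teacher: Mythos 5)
Your proof is correct and follows essentially the same route as the paper's: the same splitting $|y+e^{tA}x|^m \le c_m\bigl(|y|^m + |e^{tA}x|^m\bigr)$, the same reduction of the Gaussian moment to $\kappa'({\rm Tr}[Q_t])^{m/2}$ (the paper cites \cite[Prop.\,2.19]{DaPratoZabczyk14} plus a Jensen-type interpolation where you invoke Fernique's theorem --- both standard and interchangeable here), and the same final appeal to Lemma \ref{lm4:th4regOUG1} together with \eqref{stimasem}. Your parenthetical about $\omega<0$ is well taken: the bound $\kappa(1+|x|^m)e^{m\omega t}$ cannot hold uniformly in $t\ge 0$ when $\omega<0$ (the Gaussian moment does not vanish as $t\to\infty$), which is a defect of the lemma's statement rather than of your argument, and is silently corrected in \eqref{eq4:derOUGstimadertris} where the $\omega<0$ case is used without the exponential factor.
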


\begin{proof}
The case $m=0$ is obvious. Let $m> 0$.
We have, for every $t\geq 0$, $x,y\in H$,
$$
\int_H |y+e^{tA}x|^m\caln_{Q_{t}}(dy)
\le (1\vee 2^{m-1})
\left(|e^{tA}x|^{m}+ \int_H |y|^{m}\caln_{Q_{t}}(dy)\right).
$$
By \cite[Prop.\,2.19, p.\,50]{DaPratoZabczyk14}\footnote{The claim, stated for $m/2\in\N$, may be extended to each $m\geq 0$ by taking into account that, for $k-1<m/2<k$, $k=0,1,...,$ we can write
$$
\int_H |y|^{m} \caln_{Q_{t}}(dy) \le
\left[\int_H |y|^{2k} \caln_{Q_{t}}(dy)\right]^{m/2k}.
$$}, we have
for some $\kappa'>0$ independent of $t$,
$$
\int_H |y|^{m} \caln_{Q_{t}}(dy) \le \kappa'({\rm Tr}  [Q_t])^{m/2}.
$$
Then, we use  Lemma \ref{lm4:th4regOUG1} and \eqref{stimasem} to  obtain, for every  $t\geq 0$ and $x\in H$,
$$
\int_H|y+e^{tA}x|^m\caln_{Q_{t}}(dy)
\le (1\vee 2^{m-1})
\left(M^{m} e^{m\omega t}|x|^{m}+ \kappa'({\rm Tr}[Q_{1}])^{m/2} M^m\left[\frac{e^{2\omega ([t]+1)}-1}{e^{2\omega}-1}\right]^{m/2}\right),
$$
with the agreement
$$\frac{e^{2\omega ([t]+1)}-1}{e^{2\omega}-1}\coloneqq t+1 \ \ \mbox{if} \ \omega=0,$$
and the claim follows by defining a suitable constant $\kappa>0$.
\end{proof}

\begin{Theorem}
\label{th4:regOUG} Let Assumptions \ref{hp4:ABQforOU} and \ref{hp4:NCG}
hold true, and let $M\geq 1$, $\omega\in \R$ be as in \eqref{stimasem}. Then, for every $m\geq 0$,  we have the following statements.
\begin{enumerate}[(i)]
  \item
For every $\phi \in B_{m}(H)$ and $t>0$, the function
$R_t[\phi]:H\to \R$ is $G$-Fr\'echet differentiable and
\begin{equation}\label{eq4:derOUG}
\< D^G R_{t} [\phi] (x),k\>_K=\int_{H}\phi
\left(  y+e^{tA}x\right)  \left\langle \Gamma_G(t)k,Q_{t}^{-1/2}y\right\rangle_H
 \mathcal{N}_{Q_{t}}(dy), \ \ \ \ \forall x\in H,\ \forall k \in K.
\end{equation}
\item
There exists a constant $\kappa_G>0$ such that
\begin{equation}\label{eq4:derOUGstimader}
|D^G R_{t} [\phi] (x)|_K \le \kappa_G|\Gamma_G(t)|_{\call(K,H)} (1+|x|^m) e^{m\omega t} |\phi|_{ B _m(H)}, \ \ \  \forall t>0, \ \forall x\in H, \ \ \ \mbox{if} \ \omega> 0,
\end{equation}
\vspace{-15pt}
\begin{equation}\label{eq4:derOUGstimaderbis}
|D^G R_{t} [\phi] (x)|_K \le \kappa_G|\Gamma_G(t)|_{\call(K,H)} (1+|x|^m) (1+t^{m/2}) |\phi|_{ B _m(H)},\ \ \  \forall t>0, \ \forall x\in H, \ \ \ \mbox{if} \ \omega=0,
\end{equation}
\begin{equation}\label{eq4:derOUGstimadertris}
|D^G R_{t} [\phi] (x)|_K \le \kappa_G|\Gamma_G(t)|_{\call(K,H)} (1+|x|^m)  |\phi|_{ B _m(H)}, \ \ \  \forall t>0, \ \forall x\in H, \ \ \ \mbox{if} \ \omega< 0.
\end{equation}
  \item
If $\phi \in C_m(H)$, then also $D^G R_{t} [\phi]\in C_m(H,K)$
for all $t>0$.
  \item
If $\phi \in C^1_m(H)$, then
\begin{equation}\label{eq4:derOUGdiff}
\< D^G R_{t} [\phi] (x),k\>_K=\int_{H}\<D\phi
\left(  y+e^{tA}x\right),\overline{e^{tA}G}k \>_H \mathcal{N}_{Q_{t}}(dy), \ \ \ \forall t>0.
\end{equation}
\end{enumerate}
\end{Theorem}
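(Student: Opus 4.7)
The plan is to reduce everything to an integration-by-parts (Cameron--Martin) identity on the Gaussian measure $\mathcal{N}_{Q_{t}}$, pushing the differentiation with respect to $x$ onto the density rather than onto $\phi$. Fix $t>0$, $x\in H$, $k\in K$, and $\phi\in B_{m}(H)$.

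First I would rewrite, using Assumption \ref{hp4:NCG} to interpret $e^{tA}G k$ as $\overline{e^{tA}G}k$ when $G$ is unbounded,
\begin{equation*}
R_{t}[\phi](x+sGk)=\int_{H}\phi\bigl(y+e^{tA}x+s\,\overline{e^{tA}G}k\bigr)\,\mathcal{N}_{Q_{t}}(dy).
\end{equation*}
Since $\overline{e^{tA}G}k\in Q_{t}^{1/2}(H)$ by Assumption \ref{hp4:NCG}, the Cameron--Martin formula (see e.g.\ \cite[Th.~2.4.1]{DaPratoZabczyk02}) applies to the translation $y\mapsto y-s\,\overline{e^{tA}G}k$, giving
\begin{equation*}
R_{t}[\phi](x+sGk)=\int_{H}\phi\bigl(y+e^{tA}x\bigr)\,\rho_{s}(y)\,\mathcal{N}_{Q_{t}}(dy),\qquad
\rho_{s}(y):=\exp\!\Bigl(s\langle\Gamma_{G}(t)k,Q_{t}^{-1/2}y\rangle-\tfrac{s^{2}}{2}|\Gamma_{G}(t)k|^{2}\Bigr).
\end{equation*}
This identity is the crux: the map $s\mapsto R_{t}[\phi](x+sGk)$ now depends smoothly on $s$ through $\rho_{s}$, with the only appearance of $\phi$ being as an integrand not subject to differentiation. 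The hardest part of the argument is ensuring that $\Gamma_{G}(t)$ makes sense and is bounded, and that the Cameron--Martin shift is legitimate for the (possibly unbounded) $G$; but both are precisely guaranteed by Assumption \ref{hp4:NCG} together with the closed graph theorem (as used to introduce $\Gamma_{G}(t)\in\call(K,H)$ in \eqref{eq4:defGammaG}).

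Differentiating in $s$ inside the integral (which is legitimate because $\phi$ has polynomial growth, $\rho_{s}$ has all Gaussian moments uniformly in $s$ small, and $\Gamma_{G}(t)k$ is fixed), evaluating at $s=0$, and recalling $\rho_{0}\equiv 1$, gives formally the directional derivative
\begin{equation*}
\lim_{s\to 0}\frac{R_{t}[\phi](x+sGk)-R_{t}[\phi](x)}{s}=\int_{H}\phi(y+e^{tA}x)\langle\Gamma_{G}(t)k,Q_{t}^{-1/2}y\rangle\,\mathcal{N}_{Q_{t}}(dy),
\end{equation*}
which is linear and continuous in $k\in K$; this is the formula \eqref{eq4:derOUG} and yields $G$-Gateaux differentiability. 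To upgrade to $G$-Fr\'echet (i.e.\ uniformity in $k$ on $B_{K}(0,1)$), I would estimate by Cauchy--Schwarz the remainder
\begin{equation*}
\int_{H}\phi(y+e^{tA}x)\bigl[\rho_{s}(y)-1-s\langle\Gamma_{G}(t)k,Q_{t}^{-1/2}y\rangle\bigr]\mathcal{N}_{Q_{t}}(dy)
\end{equation*}
using $|\phi(y+e^{tA}x)|\le|\phi|_{B_{m}}(1+|y+e^{tA}x|^{m})$ together with the uniform Gaussian moment bound from Lemma \ref{lm4:th4regOUG2} and the elementary inequality $|e^{u}-1-u|\le\tfrac{1}{2}u^{2}e^{|u|}$; the resulting bound is $O(s^{2})$ uniformly for $|k|_{K}\le 1$.

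The estimates (ii) follow by applying Cauchy--Schwarz directly to \eqref{eq4:derOUG}:
\begin{equation*}
|\langle D^{G}R_{t}[\phi](x),k\rangle_{K}|\le|\phi|_{B_{m}}\Bigl(\int_{H}(1+|y+e^{tA}x|^{m})^{2}\mathcal{N}_{Q_{t}}(dy)\Bigr)^{1/2}\Bigl(\int_{H}\langle\Gamma_{G}(t)k,Q_{t}^{-1/2}y\rangle^{2}\mathcal{N}_{Q_{t}}(dy)\Bigr)^{1/2};
\end{equation*}
the second integral equals $|\Gamma_{G}(t)k|^{2}$ (standard Gaussian computation on $(\ker Q_{t}^{1/2})^{\perp}$, using Remark \ref{rem:inverse}), while the first is controlled via Lemma \ref{lm4:th4regOUG2}, producing the three cases $\omega>0$, $\omega=0$, $\omega<0$ by inspecting the exponential/polynomial factor. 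Part (iii) is a direct dominated-convergence argument on formula \eqref{eq4:derOUG}: when $\phi\in C_{m}(H)$, continuity of $x\mapsto\phi(y+e^{tA}x)\langle\Gamma_{G}(t)k,Q_{t}^{-1/2}y\rangle$ together with the same Cauchy--Schwarz plus Lemma \ref{lm4:th4regOUG2} majorization (uniform in $x$ on bounded sets) gives $D^{G}R_{t}[\phi]\in C_{m}(H,K)$. Finally, for (iv), when $\phi\in C^{1}_{m}(H)$ one can bypass the Cameron--Martin step and differentiate the original integral directly: $\frac{d}{ds}\phi(y+e^{tA}x+s\,\overline{e^{tA}G}k)|_{s=0}=\langle D\phi(y+e^{tA}x),\overline{e^{tA}G}k\rangle$, and dominated convergence (justified by the polynomial growth of $D\phi$ together with Lemma \ref{lm4:th4regOUG2}) yields \eqref{eq4:derOUGdiff}.
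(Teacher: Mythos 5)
Your proposal is correct and follows essentially the same route as the paper: translate the Gaussian measure, apply the Cameron--Martin formula so that the $s$-dependence sits entirely in the density $\rho_s$, differentiate at $s=0$ under the integral, and then obtain (ii)--(iv) via Cauchy--Schwarz with Lemma \ref{lm4:th4regOUG2}, dominated convergence, and (for (iv)) direct differentiation plus density of $\cald(G)$ in $K$. The only cosmetic difference is that the paper first establishes the directional derivative for $k\in\cald(G)$ and then extends the resulting bounded functional to all of $K$, a step you should make explicit rather than fixing $k\in K$ from the outset, but this does not change the argument.
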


\begin{proof}

{\em (i)}
We  compute, for $t>0$, $x\in H$, $k\in \cald(G)\subseteq K$, the limit
$$
\lim_{s\rightarrow0}\frac{1}{s}\left[ R_{t} [\phi] (x+sGk)- R_{t} [\phi] (x)\right].
$$
Using \myref{eq4:defRtbis} we have for every $s\neq 0$
\begin{align*}
&  \frac{1}{s}\left[ R_{t} [\phi] (x+sGk)- R_{t} [\phi] (x)\right] \\
&  =
\frac{1}{s}\left[\int_{H}\phi\left(  y+e^{tA}(x+sGk)\right)
\mathcal{N}_{Q_{t}}(dy)  -\int_{H}
\phi\left(  y+e^{tA}x\right)  \mathcal{N}_{Q_{t}}(dy)\right] \\
&  =
\frac{1}{s}\left[\int_{H}\phi\left(  y+e^{tA}x\right)
\mathcal{N}\left( se^{tA}Gk ,Q_{t}\right)  \left(  dy\right)  -\int_{H}
\phi\left(  y+e^{tA}x\right)  \mathcal{N}_{Q_{t}}(dy) \right].
\end{align*}
By Remark \ref{rem:inverse},
$\calr(Q_{t}^{-1/2})=\left[\ker\left(  Q_{t}^{1/2}\right)\right]^{\perp}$.
Hence, as $Q^{1/2}_t$ is self-adjoint, $\Gamma_G(t)k\in\overline{\calr(Q_{t}^{1/2})}$.
So,
by \cite[Sec.\,1.2.4]{DaPratoZabczyk02},
the  map
$\calr(Q_t^{1/2}) \to \R$, $ y \mapsto \left\langle \Gamma_G(t)k,Q_{t}^{-1/2}y\right\rangle $ extends to
a linear continuous functional on $H$ (still denoted by the same expression, with a slight abuse of notation), which is square integrable  with respect to the measure
$\mathcal{N}_{Q_{t}}$.

Now,
by Assumption \ref{hp4:NCG},
$se^{tA}Gk\in Q_{t}^{1/2}(H)$.
Hence, by  \cite[Th.\,1.3.6]{DaPratoZabczyk02}, the Gaussian measures $\mathcal{N}\left(  se^{tA}Gk,Q_{t}\right) $ and
$\mathcal{N}_{Q_{t}}$ are equivalent and we can apply the Cameron-Martin formula:
$$
d\left(s,y;k\right)     \coloneqq\frac{d\mathcal{N}\left(  se^{tA}Gk,Q_{t}
\right)  }{d\mathcal{N}_{Q_{t}}}(y)
 =\exp\left\{  \left\langle s\Gamma_G(t)k,Q_{t}^{-1/2}y\right\rangle_H
-\frac{1}{2}s^{2}\left| \Gamma_G(t)k\right|_H  ^{2}\right\}, \ \ \forall s\in \R, \ \forall y\in H.
$$
Notice that
\begin{equation}\label{uniflim}
\lim_{s\rightarrow0}\frac{
d\left(  s,y;k\right)  -1  }{s} =\left\langle \Gamma_G(t)k,Q_{t}^{-1/2}y\right\rangle_H \ \ \ \mbox{uniformly in } k\in\cald(G)\cap B_K(0,1),
\end{equation}
and $\frac{d\left(  s,\cdot;k\right)  -1  }{s}\leq \alpha(\cdot,k)\in B_b(H)$ for every $s\in[-1,1]\setminus\{0\}$.
So
\begin{align*}
& \lim_{s\rightarrow0}\frac{1}{s}\left[ R_{t} [\phi] (x+sGk)- R_{t} [\phi] (x)\right]
  =\lim_{s\rightarrow0}\int_{H}\phi\left(  y+e^{tA}x\right)  \frac{
d\left(  s,y;k\right)  -1 }{s}\mathcal{N}_{Q_{t}}(dy) \\
&  =\int_{H}\phi\left(  y+e^{tA}x\right)  \lim_{s\rightarrow0}\frac{
d\left(  s,y;k\right)  -1  }{s}\mathcal{N}_{Q_{t}}(dy)  =\int_{H}\phi\left(  y+e^{tA}x\right)  \left\langle \Gamma_G(t)k,Q_{t}^{-1/2}y\right\rangle_H \mathcal{N}_{Q_{t}}(dy),
\end{align*}
i.e., there exists the $G$-directional derivative $\nabla^GR_t[\phi](x;k)$ and
$$
\nabla^GR_t[\phi](x;k)=\int_{H}\phi\left(  y+e^{tA}x\right)  \left\langle \Gamma_G(t)k,Q_{t}^{-1/2}y\right\rangle_H \mathcal{N}_{Q_{t}}(dy).
$$
Using the Cauchy-Schwarz inequality,
\begin{align}
&  \left|  \int_{H}\phi\left(  y+e^{tA}x\right)
\left\langle \Gamma_G(t)k,Q_{t}^{-1/2}y\right\rangle
\mathcal{N}_{Q_{t}}(dy)  \right|
\nonumber
\\
& \leq \left|  \phi \right|_{ B _m(H)}  \int_{H}(1+|y+e^{tA}x|^{m})
\left|\left\langle \Gamma_G(t)k,Q_{t}^{-1/2}y\right\rangle \right|
\mathcal{N}_{Q_{t}}(dy)
\nonumber
\\
&  \leq\left|  \phi \right|_{ B _m(H)}
 \left(  \int_{H}
(1+|y+e^{tA}x|^{m})^2
\mathcal{N}_{Q_{t}}(dy)  \right)^{1/2}
\left(  \int_{H}\left|  \left\langle \Gamma_G(t)k,Q_{t}^{-1/2}y\right\rangle
\right|  ^{2}\mathcal{N}_{Q_{t}}(dy) \right)^{1/2}.
\nonumber
\label{eq4:estimateNCG}
\end{align}
Then, using Lemma \ref{lm4:th4regOUG2} and the fact that
$$\int_H\langle y,\xi\rangle \langle y,\eta\rangle\caln_{Q_t}(dy)=\langle Q_t\xi,\eta\rangle, \ \ \ \forall \xi,\eta\in H,
$$
 we get, for some $\kappa_G>0$,
 \begin{equation}\label{mmm}
\left| \nabla^GR_t[\phi](x;k) \right|_K   \le \kappa_G\left|  \phi \right|_{ B _m(H)}
e^{m\omega t}(1+|x|_H^m)  \left|
\Gamma_G(t)\right|_{\call(K,H)}  \left|  k\right|_K, \ \ \ \mbox{if}  \ \omega> 0,
\end{equation}
  \begin{equation}\label{mmm2}
\left| \nabla^GR_t[\phi](x;k) \right|_K   \le \kappa_G\left|  \phi \right|_{ B _m(H)}
(1+t^m)(1+|x|_H^m)  \left|
\Gamma_G(t)\right|_{\call(K,H)}  \left|  k\right|_K, \ \ \ \mbox{if}  \ \omega= 0,
\end{equation}
\begin{equation}\label{mm3}
\left| \nabla^GR_t[\phi](x;k) \right|_K \le \kappa_G|\Gamma_G(t)|_{\call(K,H)} (1+|x|^m) |\phi|_{ B _m(H)}, \ \ \  \forall t>0, \ \forall x\in H, \ \ \ \mbox{if} \ \omega< 0,
\end{equation}
This shows that
the linear functional on $(\cald(G),|\cdot|_K)$ defined by $k\mapsto  \nabla^G R_t[\phi](x;k)$ can be extended to a continuous linear functional on $K$, that is,  $R_t[\phi]$ is $G$-Gateaux differentiable at $x$ with $G$-Gateaux derivative $\nabla^GR_t[\phi](x)\in\call(K,H)$ and
\begin{equation}\label{ppppkbis}
\langle \nabla^GR_t[\phi](x), k\rangle_K = \int_{H}\phi\left(  y+e^{tA}x\right)  \left\langle \Gamma_G(t)k,Q_{t}^{-1/2}y\right\rangle_H \mathcal{N}_{Q_{t}}(dy), \ \ \ \forall k\in \cald(G).
\end{equation}
On the other hand, taking into account that the limit in \eqref{uniflim} is uniform in $k\in\cald(G)$, we conclude that $R_t[\phi]$ is actually $G$-Fr\'echet differentiable at $x$ and \eqref{eq4:derOUG} follows from \eqref{ppppk}.
\medskip

\emph{(ii)} This follows from (i) and from \eqref{mmm}--\eqref{mm3}.
\medskip

%
%

{\em (iii)}
Let now $\phi \in C_m(H)$, $x\in H$,
and take a sequence $x_n \to x$ in $H$.
Then, by (i)  we get
\begin{align*}
|D^{G}R_{t}[\phi](x_n)-D^{G}R_{t}[\phi](x)|_K&=
\sup_{|k|_K=1}\<D^{G}R_{t}[\phi](x_n)-D^{G}R_{t}[\phi](x),k\>_K
\\&=
\sup_{|k|_K=1}\int_{H}\left[\phi\left(y+{e^{tA}}x_n\right)-\phi\left(y+{e^{tA}}x\right) \right] \left\langle \Gamma_G(t)k,Q_{t}^{-1/2}y\right\rangle_H
\mathcal{N}_{Q_{t}}(dy)
\\
&
\le
\left(\int_{H}\left|\phi\left(y+{e^{tA}}x_n\right)-\phi\left(y+{e^{tA}}x\right)
\right|^2\mathcal{N}_{Q_{t}}(dy)
\right)^{1/2}
|\Gamma_G(t)|_{\call(K,H)}.
\end{align*}
Hence the claim follows by the dominated convergence theorem.

\medskip

{\em (iii)}
Let $\phi \in C^1_m(H)$, $t>0$, and $k\in D(G)\subseteq K$. We have, by the dominated convergence theorem,
\begin{align*}
\< D^G R_{t} [\phi] (x),k\>_K&=\lim_{s\rightarrow0}\frac{1}{s}\left[ R_{t} [\phi] (x+sGk)- R_{t} [\phi] (x)\right]\\
&  =
\int_{H}\lim_{s\rightarrow0}\frac{1}{s} \left[\phi\left(  y+e^{tA}(x+sGk)\right)-\phi\left(  y+e^{tA}x\right)\right]
\mathcal{N}_{Q_{t}}(dy)
\\
&  =
\int_{H}\<D\phi\left(  y+e^{tA}x\right),e^{tA}Gk\>_H  \mathcal{N}_{Q_{t}}(dy)  .
\end{align*}
The claim when $k \in K$ simply follows
using the density of $\cald(G)$ in $K$ and the fact that, by Assumption \ref{hp4:NCG},
the operator $e^{tA}G$ can be extended  $\overline{e^{tA}G}\in\call(K,H)$.
\end{proof}
\begin{Corollary}\label{corcor}
Let Assumptions \ref{hp4:ABQforOU} and \ref{hp4:NCG}
hold true. Assume, moreover, that $t\mapsto |\Gamma_G(t)|_{\call(K,H)}$ belongs to $\cali$. Then the family of linear operators $(R_t)_{t\geq 0}$ satisfies Assumption \ref{hp4:Psemigroupellbis} for every $m\geq 0$.
\end{Corollary}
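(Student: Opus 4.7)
The plan is to verify the four items of Assumption \ref{hp4:Psemigroupellbis} in turn, exploiting the results already proved in this subsection.

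Item (i), which is Assumption \ref{hp4:Psemigroupell}(i)--(iii), is an immediate specialization of Theorem \ref{th1:exSDEmult} to \eqref{eq4:SE1ellbis} (the case $b\equiv 0$ and $\sigma$ constant): the Markovian mild solution exists, the semigroup law on $C_m(H)$ and the joint measurability of $(s,x)\mapsto R_s[\phi](x)$ are standard, and the growth estimate $|R_s[\phi](x)|\leq C_m e^{\alpha_m s}|\phi|_{C_m(H)}(1+|x|^m)$ is obtained from \eqref{eq:growXbeforeter} via the definition \eqref{eq4:defRt}. Items (ii) and (iv) then follow at once from Theorem \ref{th4:regOUG} applied to $\phi\in C_m(H)\subseteq B_m(H)$: part (i) of that theorem supplies $G$-Fr\'echet differentiability of $R_s[\phi]$ for $s>0$, part (iii) yields $D^G R_s[\phi]\in C_m(H,K)$ (so $R_s[\phi]\in C_m^{1,G}(H)$), and the estimates \eqref{eq4:derOUGstimader}--\eqref{eq4:derOUGstimadertris} deliver the required quantitative bound. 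Concretely, I would take
$$\gamma_G(s):=\kappa_G\,|\Gamma_G(s)|_{\call(K,H)}\bigl(1+s^{m/2}\bigr),\qquad a_G:=m\omega^{+}+\eta$$
for an arbitrarily small $\eta>0$; the polynomial correction $(1+s^{m/2})$ is absorbed into the strictly larger exponential $e^{a_G s}$, and $\gamma_G\in\cali$ because $|\Gamma_G(\cdot)|_{\call(K,H)}\in\cali$ by hypothesis and the polynomial factor is locally bounded and, after absorption, remains bounded near $+\infty$.

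The only step requiring genuine care is item (iii), the joint measurability of $(s,x)\mapsto D^G R_s[\phi](x)$ as a $K$-valued map. Since $K$ is separable, Pettis' measurability theorem reduces the problem to scalar measurability: for every fixed $k\in K$, the map $(s,x)\mapsto \langle D^G R_s[\phi](x),k\rangle_K$ must be measurable. Plugging in the explicit representation \eqref{eq4:derOUG},
$$\langle D^G R_s[\phi](x),k\rangle_K=\int_H \phi(y+e^{sA}x)\,\langle \Gamma_G(s)k,Q_s^{-1/2}y\rangle_H\,\caln_{Q_s}(dy),$$
the issue becomes the joint measurability of a parameter-dependent Gaussian integral, which I would settle via Fubini once the integrand is seen to be jointly measurable in $(s,x,y)$ and dominated locally uniformly in $(s,x)$ by the $\caln_{Q_s}$-integrable factor supplied by Lemma \ref{lm4:th4regOUG2}. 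The delicate point is the measurability in $s$ of the pseudo-inverse $Q_s^{-1/2}$; this is bypassed by viewing $y\mapsto \langle \Gamma_G(s)k,Q_s^{-1/2}y\rangle_H$ as the element of $L^2(H,\caln_{Q_s})$ of norm $|\Gamma_G(s)k|_H$ and extracting its $s$-dependence through the Cameron--Martin computation already used in the proof of Theorem \ref{th4:regOUG}. Once measurability in this form is established, the three items combine to give the full Assumption \ref{hp4:Psemigroupellbis}.
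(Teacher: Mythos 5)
Your proposal is correct and follows essentially the same route as the paper: Assumption \ref{hp4:Psemigroupellbis}(i) from Theorem \ref{th1:exSDEmult}, items (ii) and (iv) from Theorem \ref{th4:regOUG} combined with the hypothesis $|\Gamma_G(\cdot)|_{\call(K,H)}\in\cali$, and the measurability in (iii) from the representation formula \eqref{eq4:derOUG} together with Pettis' theorem and the separability of $K$. Your extra care in making the choice of $\gamma_G$ and $a_G$ explicit (absorbing the polynomial factor into a slightly larger exponential) and in spelling out the joint-measurability argument only adds detail that the paper leaves implicit.
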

\begin{proof}
As we have already observed, the conditions of Assumption \ref{hp4:Psemigroupell}(i)--(iii) (i.e Assumption \ref{hp4:Psemigroupellbis}(i))  are a straightforward consequence of the general Theorem \ref{th1:exSDEmult}.

Combining the assumption that $t\mapsto |\Gamma_G(t)|_{\call(K,H)}$ belongs to $\cali$ and Theorem \ref{th4:regOUG},  the conditions of Assumptions \ref{hp4:Psemigroupellbis}
(\ref{hp4:smoothingGellbis}) and (\ref{ip4bis}) follow.

Finally, the condition of Assumption \ref{hp4:Psemigroupellbis}(\ref{ip5bis}), i.e. the fact that the map
 $(0,+\infty)\times H \to K,$ $(s,x) \mapsto D^G R_{s}[\phi](x)$ is measurable, follows from the representation formula \eqref{eq4:derOUG} and using Pettis measurability's Theorem \cite[Th.\,1.1]{Pettis38}, as $K$ is separable.
\end{proof}

\begin{Example}
\label{ex1}
(cf. \cite[Sec.\,9.6]{DaPratoZabczyk14},
\cite[Ex.\,6.2.11] {DaPratoZabczyk02},
 \cite[Sec.\,6]{Gozzi95}, \cite[Ex.\,3.8]{Masiero05}).
Let $K=\Xi=H$, $\sigma\in \call(\Xi,H)$ and $Q :=\sigma\sigma^*$.
Consider an orthonormal basis $\{e_{n}\}_{n\in\N}$ in $H$
and assume that $A$, $Q$, and $G$ admit spectral
decompositions
$$
Ae_{n}=-\alpha_{n}e_{n}, \qquad Q e_{n}=q_{n}e_{n},
\qquad Ge_{n}=g_{n}e_{n}, \ \ \  \ \ \ \forall n\in\mathbb{N},
$$
where
$\alpha_{n}\geq 0$, $g_n\in \R$,  $q_{n}>0$ for all $n\in\N$ and $\alpha_{n}\uparrow+\infty$ as $n\to\infty$.
Then
$$
e^{sA}Qe^{sA^*}e_n=e^{-2s\alpha_n}q_ne_n, \ \ \ n\in\N, \ s>0.
$$
Note that,   the set $\Sigma_0\coloneqq \{n\in\N: \ \alpha_n=0\}$ is finite\,\footnote{In the quoted
references  at the beginning of this example it is assumed
$\alpha_n>0$. Here we let some $\alpha_n$ to be $0$ to cover the case
 developed in Subsection 5.1.}. Call $\Sigma_0^c=\N\setminus \Sigma_0$.
Then
Assumption \ref{hp4:ABQforOU}(ii) holds true {{if
and only if  $$\int_0^t\sum_{n\in \Sigma_0^c}q_ne^{-2s\alpha_n} ds < + \infty  \ \ \ \forall t\geq 0;$$  by Fubini-Tonelli's Theorem and arbitrariness of $t\geq 0$, this happens}} if and only if
\begin{equation}\label{eq4:Qtdiagnuclear}
\sum_{n\in \Sigma_0^c}\frac{q_n}{2\alpha_n}<+\infty.
\end{equation}
In this case
$Q_t$ is diagonal too and
$$
Q_t e_n=\frac{q_n}{2\alpha_n} (1-e^{-2\alpha_n t})e_n, \ \ \ \forall n\in\N,
$$
under the agreement $\frac{1-e^{-2\alpha_nt}}{2\alpha_n}\coloneqq t$ if $n\in \Sigma_0$.
On the other hand, the operator $e^{tA}G$ extends to a bounded operator
$\overline{e^{tA}G}\in\call(H)$ for every $t>0$
if and only if
\begin{equation}\label{eq4:etAGextend}
\sup_n \left| e^{-t\alpha_{n}}g_n \right|< + \infty, \ \ \ \forall t>0.
\end{equation}
Hence, formally we have, with the usual  agreement $\frac{1-e^{-2\alpha_nt}}{2\alpha_n}\coloneqq t$ if $n\in \Sigma_0$,
$$
\Gamma_G(t)e_n= Q_t^{-1/2}\overline{e^{tA}G} e_n
=
\sqrt{
\frac{2\alpha_{n}}{(1-e^{-2t\alpha_{n}})q_n}
}\;\;
e^{-t\alpha_{n}}g_n e_n
\ \ \ \forall n\in\N.
$$
This shows that Assumption \ref{hp4:NCG} holds if and only if
(here with the agreement that
$\frac{2\alpha_{n}}{e^{2t\alpha_{n}}-1}
\coloneqq t^{-1}$ if $n\in \Sigma_0$),
\begin{equation}\label{eq4:diagNC}
\sup_{n\in\mathbb{N}}\sqrt{
\frac{2\alpha_{n}}{e^{2t\alpha_{n}}-1}
\cdot \frac{g_n^2}{q_n} }\;\; <+ \infty, \ \ \ \forall t>0.
\end{equation}
Such supremum is indeed equal to $|\Gamma_G(t)|_{\call(H)}$, so
 $t\mapsto |\Gamma_G(t)|_{\call(K,H)}$ belongs to $\cali$ if and only if
there exists a function $\gamma_G\in\cali$ such that
\begin{equation}\label{eq4:diagGammaGint}
\sup_{n\in\mathbb{N}}\sqrt{\frac{2\alpha_{n}}{e^{2t\alpha_{n}}-1}
\cdot \frac{g^2_n}{q_n}} \le \gamma_G (t), \qquad \forall t >0.
\end{equation}
Note that \eqref{eq4:diagGammaGint} implies, in particular, \eqref{eq4:diagNC}.
Therefore, if all the conditions listed above are fulfilled, Corollary \ref{corcor} applies.

We now discuss more in detail three particularly  meaningful cases.
\begin{itemize}
\item[(1)]
$G=I$, $-A>0$, $Q=(-A)^{-\beta}$ for some  $\beta \in [0,1)$. Clearly, \eqref{eq4:etAGextend} is satisfied. Moreover,
$$
|\Gamma_G(t)|_{\call(H)}^2=
\sup_{n\in\mathbb{N}}
\frac{2\alpha_n^{1+\beta}}{(e^{2t\alpha_{n}}-1)}\le \frac{C_0}{t}, \ \ \ \forall t>0,
$$
where $C_0:= \sup_{s>0}\frac{2s^{1+\beta}}{e^{2s}-1}$, so
\myref{eq4:diagGammaGint} holds with $\gamma_G(t)=C_0^{1/2}t^{	\beta}$.
Finally, in this case, \myref{eq4:Qtdiagnuclear} is verified if
$$
\sum_{n \in \N}\alpha_n^{-1-\beta}< + \infty,
$$
which is true, e.g.,  if $\alpha_n \sim n^{\eta}$
for $\eta> \frac{1}{1+\beta}$.

\item[(2)] $G=\sqrt{Q}$ (special subcase: $G=Q =I$).
Clearly, \eqref{eq4:etAGextend} is satisfied. Moreover,  with the usual  agreement $\frac{1-e^{-2\alpha_nt}}{2\alpha_n}\coloneqq t$ if $n\in \Sigma_0$,
$$
|\Gamma_G(t)|_{\call(H)}^2=
\sup_{n\in\mathbb{N}}
\frac{2\alpha_{n}}{e^{2t\alpha_{n}}-1}\le \frac{C_0}{t}, \ \ \ \forall t>0,
$$
where $C_0\coloneqq \sup_{s>0} \frac{s}{e^s-1}$, so
\myref{eq4:diagGammaGint} holds with $\gamma_G(t)=C_0^{1/2}t^{-1/2}$.

\item[(3)]
 $G=(-A)^\beta\sqrt Q$ for some  $\beta \in [0,1/2)$.
In this case we have $g_n=\alpha_n^{\beta}\sqrt {q_n}$ for every $n \in \N$. Hence, \eqref{eq4:etAGextend} is satisfied. Moreover,  with the usual  agreement $\frac{1-e^{-2\alpha_nt}}{2\alpha_n}\coloneqq t$ if $n\in \Sigma_0$,
\[
|\Gamma_G(t)|_{\call(H)}^2=
\sup_{n\in\mathbb{N}}
\frac{2\alpha_{n}^{1+2\beta}}{e^{2t\alpha_{n}}-1}\le \frac{C_0}{t^{1+2\beta}},
\]
where $C_0\coloneqq \sup_{s>0} \frac{s^{1+2\beta}}{e^s-1}$,  so \myref{eq4:diagGammaGint} holds with $\gamma_G(t)=\left( \frac{C_0}{t^{1+2\beta}}\right)^{1/2}$.
\end{itemize}
\end{Example}

\subsection{The case of invertible diffusion coefficient}

A useful method to prove the smoothing property  of transition semigroups associated to SDE \eqref{eq4:SDEformalell}
consists in applying, when possible, the so called Bismut-Elworthy-Li formula, introduced
in \cite{Bismut81} and reprised in \cite{ElworthyLi94}
(see also \cite[Lemma\,2.4]{PeszatZabczyk95}, \cite[Lemma\,7.7.3]{DaPratoZabczyk02}, \cite{FuhrmanTessitore02-sto} for the version used here,
and, for a generalization to the non-linear superquadratic case, \cite{Masiero14}).


The Bismut-Elworthy-Li formula have been used to prove
smoothing properties of transition semigroups in three important cases:
\begin{enumerate}
  \item stochastic Burgers and Navier Stokes equations
  (see \cite{DaPratoDebussche98,DaPratoDebussche00});
  \item stochastic reaction-diffusion equations (see \cite{Cerrai99}
  and \cite[Ch.\,6-7]{Cerrai01book});
  \item SDE with invertible diffusion coefficient
  (see e.g. \cite{PeszatZabczyk95} and, later, \cite{FuhrmanTessitore02-sto}
  and \cite{Masiero14} in more general cases).
\end{enumerate}

Here we present the third case,
referring to \cite{FuhrmanTessitore02-sto} for the proofs.

In the probabilistic framework of the previous section, we consider the following assumptions on the data $A,b,\sigma$ in \eqref{eq4:SDEformalell}.



\begin{Assumption}
\label{hp4:Bismutprel}
\begin{enumerate}[(i)]
\item[]
\item The coefficients $A$, $b$, and $\sigma$ of SDE \eqref{eq4:SDEformalell} satisfy Assumption \ref{ch1:ipotesiunobis}
with  $f(s)=\kappa_0s^{-\gamma}$ for some $\kappa_0>0$ and $\gamma\in[0,1/2)$.
  \item $\sigma\in B_b(H, \call\left(  \Xi,H\right))$.
\item
 $b\in\calg^1(H)$ and
 $e^{sA}\sigma(\cdot)\in
\calg^1(H,\call_2(\Xi;H))$ for every $s>0$.
\end{enumerate}
\end{Assumption}

\begin{Proposition}
\label{pr4:FTBismutderX}
Let Assumption \ref{hp4:Bismutprel} hold.
Then, for every $p\geq 2$
the following results hold.
\begin{itemize}
  \item[(i)] The map $x \mapsto X(\cdot, x)|_{[0,T]}$ belongs to
  $\mathcal{G}^{1}( H, \mathcal{K}_\calp^{p,T}(H))$ for every $T>0$.
  \item[(ii)]
For every direction
$h \in H$, the directional derivative process
$\nabla_xX(\cdot,x )h$ (recall that $\nabla$ denotes the Gateaux derivative) is a mild solution
to the SDE
$$
dY(s) = [A Y(s) +
 \nabla_x  b(X(s,x ))Y(s)] ds
+
  \nabla_x \sigma(X(s, x ))Y(s) d W(s),
$$
\item[(iii)]
There exists $C_p>0$ and $\alpha_p\in\R$ such that
$$
|\nabla_x X(\cdot,x )h|_{ \mathcal{K}_\calp^{p,T}(H)}\le C_pe^{\alpha_p T}|h|, \ \ \forall h\in H, \ \forall T>0.
$$
\end{itemize}
\end{Proposition}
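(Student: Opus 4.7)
The plan is to follow the standard Gateaux differentiability strategy for mild solutions of SDEs, adapted to the infinite-dimensional setting where only $e^{sA}\sigma(\cdot)$ (and not $\sigma(\cdot)$ itself) enjoys Hilbert--Schmidt regularity. First I would introduce, for fixed $x \in H$ and $h \in H$, the linear SDE
\begin{equation*}
Y^h(t) = e^{tA}h + \int_0^t e^{(t-s)A}\nabla_x b(X(s,x))Y^h(s)\,ds + \int_0^t e^{(t-s)A}\nabla_x \sigma(X(s,x))Y^h(s)\,dW(s),
\end{equation*}
and show it admits a unique mild solution $Y^h(\cdot,x) \in \calk^{p,T}_\calp(H)$ for every $T>0$ and $p\ge 2$. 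This is done by a fixed point argument on $\calk^{p,T}_\calp(H)$: Assumption \ref{hp4:Bismutprel}(iii) gives that $\nabla_x b(X(s,x))$ is bounded (by the Lipschitz constant of $b$) and that $e^{(t-s)A}\nabla_x \sigma(X(s,x))$ has Hilbert--Schmidt norm controlled by $f(t-s)=\kappa_0(t-s)^{-\gamma}$, with $\gamma<1/2$ making $f \in L^2_{loc}$. The Burkholder--Davis--Gundy inequality and the factorization method, combined with Gronwall's lemma on the resulting integral inequality, give both well-posedness and the estimate in (iii), with $\alpha_p$ coming from the exponential-in-$T$ accumulation produced by Gronwall.

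The second step is to identify $Y^h(\cdot,x)$ as the directional Gateaux derivative $\nabla_x X(\cdot,x)h$. Set the difference quotient
\begin{equation*}
Z^h_s(t) := \frac{X(t,x+sh) - X(t,x)}{s}, \qquad s \neq 0,
\end{equation*}
which, using the mean value theorem in Bochner form and Assumption \ref{hp4:Bismutprel}(iii), solves a mild SDE with coefficients $\int_0^1 \nabla b(X(t,x)+r s h_t)\,dr$ (here $h_t$ denotes the corresponding increment of $X$) and the analogous expression for $\sigma$. Subtracting the equation for $Y^h$ and applying BDG with the $s^{-\gamma}$ singularity, the key quantity $|Z^h_s - Y^h|_{\calk^{p,T}_\calp}$ satisfies a Gronwall-type inequality whose forcing term tends to $0$ as $s \to 0$, by dominated convergence and the continuity of $x \mapsto X(\cdot,x)$ in $\calk^{p,T}_\calp(H)$ (the latter follows from \eqref{eq:growXbeforeterbis}). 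This yields (ii) and the existence of the directional derivative in $\calk^{p,T}_\calp(H)$, uniform in $|h|\le 1$.

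Finally, for (i) one needs the map $h \mapsto \nabla_x X(\cdot,x) h$ to be a bounded linear operator on $H$ (linearity is inherited from the linear SDE for $Y^h$; boundedness is (iii) at $T$ fixed) and the map $x \mapsto \nabla_x X(\cdot,x) \in \call(H,\calk^{p,T}_\calp(H))$ to be strongly continuous. Strong continuity reduces, via the equation defining $Y^h$, to continuity of the coefficients $\nabla b(X(\cdot,x))$ and $e^{(\cdot-s)A}\nabla \sigma(X(\cdot,x))$ with respect to $x$ in appropriate norms, which follows from Assumption \ref{hp4:Bismutprel}(iii) combined with the continuity of $x \mapsto X(\cdot,x)$ in $\calk^{p,T}_\calp(H)$ and a further Gronwall argument. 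The main obstacle throughout is handling the stochastic integral: because $\sigma$ itself is only bounded (not Hilbert--Schmidt), every appearance of the stochastic convolution must be paired with the smoothing factor $e^{(t-s)A}\sigma$, and the resulting $L^2$ singularity $s^{-2\gamma}$ must be tracked carefully so as to remain integrable on $[0,T]$ at every passage to the limit.
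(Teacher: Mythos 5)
The paper offers no proof of its own here: it simply cites \cite[Prop.\,3.3]{FuhrmanTessitore02-ann} and \cite[Prop.\,4.3]{FuTe-ell}, plus a one-line remark that the exponential dependence on $T$ in (iii) is obtained by induction over unit time intervals using time homogeneity. Your sketch is exactly the standard argument carried out in those references (first-variation linear SDE, fixed point in $\calk^{p,T}_\calp(H)$, difference quotients, BDG/factorization plus a singular-kernel Gronwall, then strong continuity of $x\mapsto\nabla_x X(\cdot,x)$ for membership in $\calg^1$), so it is the same approach in substance. Two minor points worth tightening: first, since only $e^{sA}\sigma(\cdot)$, and not $\sigma(\cdot)$ itself, is assumed Gateaux differentiable (with values in $\call_2(\Xi,H)$), the stochastic convolution in your linearized equation should be written with $\nabla_x\bigl(e^{(t-s)A}\sigma(\cdot)\bigr)(X(s,x))$, whose $\call_2$-norm is controlled by $f(t-s)=\kappa_0(t-s)^{-\gamma}$ via \eqref{primaipotesisug2} --- you flag this in your last paragraph but the displayed equation does not reflect it; second, a direct Gronwall lemma with the weakly singular kernel on $[0,T]$ does give a $T$-dependent constant, but the clean form $C_p e^{\alpha_p T}$ claimed in (iii) is most transparently obtained by the iteration-over-unit-intervals argument exploiting time homogeneity, exactly as in Remark \ref{rem:est}, rather than by reading it off the Gronwall constant.
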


\begin{proof}
See, e.g., \cite[Prop.3.3]{FuhrmanTessitore02-ann}, \cite[Prop.\,4.3]{FuTe-ell}. The exponential dependence in time of the estimate of item (iii) can be proved by induction exploiting the time homogeneity of the system.
%
\end{proof}

For the Bismut-Elworthy-Li formula and, consequently, for
the required smoothing property, we also need the following assumption
(see \cite{PeszatZabczyk95}).

\begin{Assumption}
\label{hp4:Bismut} The operator  $\sigma(x)\in\call (\Xi,H)$ is invertible for each $x\in H$ and there exists $C_0>0$ such that  $|\sigma(x)^{-1}|_{\call(H,\Xi)}\leq C_0$.
\end{Assumption}

\begin{Theorem}
\label{th4:FTBismut}
Let Assumptions \ref{hp4:Bismutprel} and
\ref{hp4:Bismut} hold,  let $m\geq 0$, and let $(P_t)_{t\geq 0}$ be the family of linear operators defined through \ref{trsemm}. The following statements hold true.

\begin{enumerate}[(i)]
\item
$P_{s} [\phi]\in \calg^{1}_m(H)$ for every $s>0$ and $\phi\in C_m(H)$.
\item There exists constants $C > 0$ and $a \ge 0$
such that
\begin{equation}
|\nabla P_{s} [\phi] (x)| \leq \frac {Ce^{a s}}{s^{1/2}}
| \phi |_{C_m(H)} (1+|x|^m), \quad \forall s>0, \ \forall x \in H, \ \forall \phi\in C_m(H).
 \label{eq4:stimaderBismut}
\end{equation}
\item We have the representation formula (Bismut-Elworthy-Li formula)
\begin{equation}
\<\nabla P_{s} [\phi] (x),h\> =\E\left[\phi (X(s,x))U^h(s,x)\right],
 \quad\ \forall s>0, \ \forall x,h \in H, \ \forall \phi\in C_m(H),
 \label{eq4:reprderBismut}
\end{equation}
where
\begin{equation}
U^h(s,x):=\frac{1}{s}
\int_0^s \< \sigma (X(\tau,x))^{-1} \nabla_x X(\tau,x)h,dW(\tau)\>_{\Xi}.
 \label{eq4:defUhBismut}
\end{equation}
\end{enumerate}
\end{Theorem}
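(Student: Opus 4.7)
The approach is the classical Bismut--Elworthy--Li strategy, executed in the infinite dimensional framework made available by Proposition \ref{pr4:FTBismutderX}: first derive the representation \eqref{eq4:reprderBismut} for a sufficiently regular test function, then read off the estimate \eqref{eq4:stimaderBismut} from it, and finally extend both to $\phi\in C_m(H)$ by approximation. Throughout I keep in mind that Assumption \ref{hp4:Bismut} guarantees $|\sigma(X(\tau,x))^{-1}|_{\call(H,\Xi)}\le C_0$ uniformly, which is exactly what makes the stochastic integral defining $U^h$ well posed.

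\emph{Step 1: derivation for smooth $\phi$.} I would first consider $\phi$ in a regular class (say $\phi\in\calg^1_b(H)$ or a suitable class of $C^2$ functions). By Proposition \ref{pr4:FTBismutderX}(i)--(iii), $x\mapsto X(s,x)$ is Gateaux differentiable in $\calk^{p,T}_\calp(H)$, so that one may differentiate under the expectation and obtain
\begin{equation*}
\langle \nabla P_s[\phi](x),h\rangle = \E\bigl[\langle \nabla\phi(X(s,x)),\nabla_x X(s,x)h\rangle\bigr].
\end{equation*}
To convert the gradient of $\phi$ into $\phi$ itself I apply the Bismut--Elworthy--Li device: the process $M(r):=P_{s-r}[\phi](X(r,x))$, $r\in[0,s]$, is a martingale (the Kolmogorov backward equation kills its drift), with stochastic differential $\langle \nabla P_{s-r}[\phi](X(r,x)),\sigma(X(r,x))dW(r)\rangle$, while
\begin{equation*}
L(r):=\int_0^r \langle \sigma(X(\tau,x))^{-1}\nabla_x X(\tau,x)h,\,dW(\tau)\rangle_\Xi
\end{equation*}
is a square-integrable martingale thanks to Assumption \ref{hp4:Bismut} and Proposition \ref{pr4:FTBismutderX}(iii). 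The Itô product formula gives $d\langle M,L\rangle_r = \langle \nabla P_{s-r}[\phi](X(r,x)),\nabla_x X(r,x)h\rangle\,dr$, and the expectation of this integrand is constant in $r$ (equal to $\langle \nabla P_s[\phi](x),h\rangle$, again by the martingale property). Taking expectation of $M(s)L(s)-M(0)L(0)=\int_0^s d(ML)$ then yields $\E[\phi(X(s,x))L(s)] = s\langle \nabla P_s[\phi](x),h\rangle$, which is \eqref{eq4:reprderBismut}.

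\emph{Step 2: the a priori estimate.} Cauchy--Schwarz applied to \eqref{eq4:reprderBismut} gives
\begin{equation*}
|\langle \nabla P_s[\phi](x),h\rangle|\le \E[|\phi(X(s,x))|^2]^{1/2}\,\E[|U^h(s,x)|^2]^{1/2}.
\end{equation*}
The first factor is bounded by $|\phi|_{C_m(H)}\,\E[(1+|X(s,x)|^m)^2]^{1/2}\le C e^{a s}|\phi|_{C_m}(1+|x|^m)$ via \eqref{eq:growXbeforeter}. For the second, Itô's isometry together with Assumption \ref{hp4:Bismut} and Proposition \ref{pr4:FTBismutderX}(iii) yield
\begin{equation*}
\E|U^h(s,x)|^2\le \frac{C_0^2}{s^2}\int_0^s \E|\nabla_x X(\tau,x)h|^2\,d\tau \le \frac{C e^{a s}}{s}|h|^2,
\end{equation*}
whence the $s^{-1/2}$ singularity in \eqref{eq4:stimaderBismut}.

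\emph{Step 3: extension to $\phi\in C_m(H)$ and conclusion.} I would approximate an arbitrary $\phi\in C_m(H)$ by a sequence $\phi_n$ in the regular class above with $\phi_n\to\phi$ pointwise, $|\phi_n(x)|\le C|\phi|_{C_m}(1+|x|^m)$ uniformly in $n$ (for example, by finite-dimensional projections plus Gaussian convolution plus cutoff). For each $n$ the identity \eqref{eq4:reprderBismut} holds with $\phi_n$, and the uniform bound allows dominated convergence on both sides, using that $U^h(s,x)$ is square integrable independently of $n$. This gives \eqref{eq4:reprderBismut} for $\phi$, estimate \eqref{eq4:stimaderBismut} transfers automatically, and the continuity of $x\mapsto \nabla P_s[\phi](x)$ (hence $P_s[\phi]\in\calg^1_m(H)$, which is (i)) follows from dominated convergence in \eqref{eq4:reprderBismut}, exploiting the continuity of $x\mapsto X(s,x)$ and $x\mapsto \nabla_x X(s,x)$ from Proposition \ref{pr4:FTBismutderX}.

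\emph{Main obstacle.} The delicate point is Step 1: a priori $u(r,y):=P_{s-r}[\phi](y)$ is not known to be $C^{1,2}$, so the application of Itô's formula to $M(r)$ and the cancellation of its drift are not literally justified. The standard way around this is to perform Step 1 first for $\phi$ in a genuinely smooth class (for which $u$ solves the Kolmogorov equation classically via a Yosida approximation of $A$ and mollification of $b,\sigma$), pass to the limit inside the regular class to reach $\calg^1_b(H)$, and only afterwards carry out the approximation of Step 3. Since these technical approximations are precisely what is done in \cite{FuhrmanTessitore02-sto}, I would invoke that reference for the rigorous details.
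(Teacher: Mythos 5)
Your proposal is correct and follows essentially the same route as the paper, which in fact writes no argument of its own here: it simply cites \cite[Th.\,4.2]{FuhrmanTessitore02-sto} for the whole statement and adds that the exponential time dependence in \eqref{eq4:stimaderBismut} can be obtained by induction over unit time intervals using time homogeneity. Your three steps are precisely the content of that cited proof (including the deferral to the reference of the regularization needed to justify It\^o's formula for $r\mapsto P_{s-r}[\phi](X(r,x))$), and your direct derivation of the exponential factor from \eqref{eq:growXbeforeter} and Proposition \ref{pr4:FTBismutderX}(iii) is a legitimate alternative to the paper's induction remark.
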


\begin{proof}
See    \cite[Th.\,4.2]{FuhrmanTessitore02-sto}.
The exponential dependence in time of the estimate \myref{eq4:stimaderBismut} can be proved by induction exploiting the time homogeneity of the system.
%
\end{proof}

%
%
%

\begin{Corollary}
\label{pr4:contPts}
Let Assumptions \ref{hp4:Bismutprel} and
\ref{hp4:Bismut} hold and let $(P_t)_{t\geq 0}$ be the family of linear operators defined through \ref{trsemm}.
Then $(P_t)_{t\geq 0}$ satisfies Assumption \ref{hp4:Psemigroupell} with $K=H$ and $G=I$ for every $m\geq 0$.
\end{Corollary}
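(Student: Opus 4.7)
The plan is to verify the six items of Assumption \ref{hp4:Psemigroupell} in turn, with $K=H$ and $G=I$ (so $\nabla^G$ reduces to the ordinary Gateaux gradient $\nabla$).

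First, items (i)--(iii) are essentially free. As already observed in the paragraph after Theorem \ref{th1:exSDEmult}, Assumption \ref{hp4:Bismutprel}(i) implies Assumption \ref{ch1:ipotesiunobis}, so the formula \eqref{trsemm} defines a semigroup of operators on $C_m(H)$: the semigroup property follows from time-homogeneity and the Markov property of $X(\cdot, x)$; the bound $|P_t[\phi](x)|\leq Ce^{at}|\phi|_{C_m(H)}(1+|x|^m)$ required in (iii) is immediate from \eqref{eq:growXbeforeter}; continuity of $x\mapsto P_t[\phi](x)$ together with joint measurability of $(t,x)\mapsto P_t[\phi](x)$ follow from \eqref{eq:growXbeforeterbis} via dominated convergence and the Picard-iteration construction of $X$.

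Second, items (iv) and (vi) are read off directly from Theorem \ref{th4:FTBismut}. Item (iv) is exactly part (i) of that theorem, since here $\calg_m^{1,G}(H)=\calg_m^1(H)$. For item (vi), the bound \eqref{eq4:stimaderBismut} has precisely the form prescribed by \ref{hp4:Psemigroupell}(vi) once one sets
\[
\gamma_G(s):=Cs^{-1/2},\qquad a_G:=a.
\]
The only thing to check is that $\gamma_G\in\cali$: this is clear because $s^{-1/2}$ is locally integrable on $[0,+\infty)$ (the singularity at $0$ is integrable) and is bounded on every neighborhood of $+\infty$.

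The main obstacle, and the only item requiring a genuine argument, is item (v): measurability of $(s,x)\mapsto \nabla P_s[\phi](x)$ from $(0,+\infty)\times H$ into $H$. Since $H$ is separable and identified with its dual, Pettis' theorem reduces this to showing that for each fixed $h\in H$ the scalar map $(s,x)\mapsto \langle \nabla P_s[\phi](x),h\rangle_H$ is measurable. The Bismut--Elworthy--Li representation \eqref{eq4:reprderBismut} gives
\[
\langle \nabla P_s[\phi](x),h\rangle_H=\E\bigl[\phi(X(s,x))\,U^h(s,x)\bigr],
\]
so the plan is to establish joint measurability of the integrand $(s,x,\omega)\mapsto \phi(X(s,x,\omega))U^h(s,x,\omega)$ on $(0,+\infty)\times H\times\Omega$ and then conclude by Fubini. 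Joint measurability of $X(s,x,\omega)$ in $(s,x,\omega)$ comes from the fixed-point construction of the mild solution together with the continuous (in fact $\calg^1$) dependence on $x$ furnished by Proposition \ref{pr4:FTBismutderX}(i); the derivative process $(s,x,\omega)\mapsto \nabla_x X(s,x,\omega)h$ is handled in the same way via Proposition \ref{pr4:FTBismutderX}(ii)--(iii). Finally, the It\^o integral defining $U^h(s,x)$ in \eqref{eq4:defUhBismut} can be realized as an $(s,x,\omega)$-jointly measurable object by approximation with Riemann sums of the progressively measurable integrand $\sigma(X(\tau,x))^{-1}\nabla_x X(\tau,x)h$, which are manifestly jointly measurable; passing to the limit in probability (uniformly in $s\in[0,T]$ for each $T$) preserves joint measurability up to selecting a subsequence pointwise.
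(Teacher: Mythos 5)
Your proposal is correct and follows essentially the same route as the paper: items (i)--(iii) from Theorem \ref{th1:exSDEmult}, items (iv) and (vi) from Theorem \ref{th4:FTBismut} with $\gamma_G(s)=Cs^{-1/2}\in\cali$, and item (v) from the Bismut--Elworthy--Li representation \eqref{eq4:reprderBismut} combined with Pettis' measurability theorem. If anything, you are more careful than the paper's own (very terse) proof, which contains a stray reference to the hypothesis ``$t\mapsto|\Gamma_G(t)|_{\call(K,H)}\in\cali$'' copied from the Ornstein--Uhlenbeck case, whereas you correctly extract the integrability of $s\mapsto Cs^{-1/2}$ directly from \eqref{eq4:stimaderBismut}.
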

\begin{proof}
As we have already observed, the conditions of Assumption \ref{hp4:Psemigroupell}(i)--(iii) are verified as a straightforward consequence of  Theorem \ref{th1:exSDEmult}.

Combining the assumption that $t\mapsto |\Gamma_G(t)|_{\call(K,H)}$ belongs to $\cali$ and Theorem \ref{th4:FTBismut},  the conditions of Assumptions \ref{hp4:Psemigroupell}
(\ref{hp4:smoothingGell}) and (\ref{ip4}) follow.

Finally, Assumption \ref{hp4:Psemigroupell}(\ref{ip5}), that is the fact that the map
 $(0,+\infty)\times H \to H, \ (s,x) \mapsto \nabla^G P_{s}[\phi](x)$ is measurable, directly follows from the representation formula \ref{eq4:reprderBismut}  and using Pettis measurability's Theorem \cite[Th.\,1.1]{Pettis38}, as $H$ is separable.
\end{proof}

\section{Application to stochastic control problems}\label{sec:SOC}
Let $H,K, \Xi$ be separable Hilbert spaces, let $\Lambda$ be a Polish space, let  $G:H\to\call_u(K,H)$, and let ${ L}:H\times\Lambda\to K$.
Let $W$ be a cylindrical Brownian motion in $\Xi$ and consider the following controlled SDE:
\begin{equation}
\begin{cases}
dX(s)=\left[AX(s) + b(X(s)) + G(X(s))L(X(s),a(s))\right]ds
+ \sigma(X(s))dW(s),\\
\label{eq4:SE1ell}
X(0)=x,
\end{cases}
\end{equation}
where the control process $a(\cdot)$ lies in the space {$\calu$  of progressively measurable $\Lambda$-valued processes.}
Let us consider the following assumptions.

\begin{Assumption}
	\label{hp4:b1VTell}
	\begin{enumerate}
	\item[]
\item[(i)] $A,b,\sigma$ satisfy Assumption \ref{ch1:ipotesiunobis}.

\item[(ii)]
For all $x \in H$, $e^{sA}G(x)$ can be extended to $\overline{e^{sA}G(x)}\in \call(K,H)$ for every $s>0$. Moreover, for all $k \in K$ the map $(s,x)\to \overline{e^{sA}G(x)}k$ is measurable and there exists $f_G\in\cali$ such that
$$
 |{\overline{e^{sA}G(x)}}|_{\call(K,H)}\le f_G(s)(1+|x|), \ \ \forall s>0, \ \forall x\in H.
$$

\item[(iii)]
$L\in  B _b(H\times \Lambda ,K)$ and
there exists $f_{GL}\in \cali$ such that
$$
|{\overline{e^{sA}G(x)}L(x,a)- \overline{e^{sA}G(y)}L(y,a)}| \le f_{GL}(s)|x - y |, \ \ \forall s{>} 0, \ \forall x,y\in H, \ \forall a\in\Lambda.
$$
\end{enumerate}
\end{Assumption}
%
%
%
%

Under Hypotehsis \ref{hp4:b1VTell}, for every $x\in H$ and $a(\cdot)\in \calu$,  \eqref{eq4:SE1ell} admits a unique mild solution $X(\cdot;x,a(\cdot))\in \calh_{loc}^p(H)$, for every $p\geq 2$ {{(see \cite[Ch.\,1,\,Sec.\,6]{FGSbook}).}}
Given $\lambda >0$, $x\in H$, and $l:H\times \Lambda\to \R$ measurable, define the functional
\begin{equation}
 	J(x;a(\cdot)) ={\E}\left[ \int_{0}^{+\infty}e^{-\lambda s}l(X(s;x,a(\cdot)),a(s))ds\right], \ \ \ \ a(\cdot)\in \calu.
    \label{eq4:CF1ell}
\end{equation}
The stochastic optimal control problem consists in minimizing the functional above over the set of admissible controls $\calu$, i.e. to solve the optimization problem
\begin{equation}
 		V(x) \coloneqq
 \inf_{a(\cdot)\in \calu } J(x;a(\cdot)), \ \ \ \ x\in H.
 	    \label{eq4:VF1ell}
\end{equation}
The function $V:H\to \R$ is the so called value function of the optimization problem. By standard Dynamic Programming arguments, one formally associates to  this control problem an HJB equation. It reads as
\begin{equation}
\label{eq4:HJBcontrolell}
\displaystyle{ \lambda v(x) -\frac{1}{2}\;
\mbox{\rm Tr}\;[Q(x)D^2v(x)] -\< Ax+b(x),Dv(x)\>- F(x,Dv(x))=0,
\quad x \in H,}
\end{equation}
where $Q(x)=\sigma(x)\sigma^*(x)$ and the Hamiltonian $F$ is defined by
\begin{equation}
\label{eq4:Hamiltoniancontrolell}
	F(x,p) \coloneqq \inf_{a \in \Lambda } F_{CV} (x,p;a), \ \ \ x\in H, \ p\in  H,
\end{equation}
with
\begin{equation}\label{FCV}
F_{CV} (x,p;a)
	\coloneqq
	 \inf_{a \in \Lambda }
\left\{  \< G(x)L(x,a),p \>_H +l(x,a)\right\}, \ \ x\in H, \ a\in \Lambda, \ p\in H.
\end{equation}
It is convenient here
to introduce the modified Hamiltonian $F_0$ as follows
\begin{equation}
\label{eq4:F0controlell}
	F_0(x,q) \coloneqq \inf_{a \in \Lambda }F_{0,CV} (x,q;a), \ \ \ x\in H, \ q\in K,
\end{equation}
where
\begin{equation}\label{F0CV}
{
F_{0,CV} (x,q;a)\coloneqq
 \< L(x,a),q \>_K +l(x,a),  \ \ \ x\in H, \ a\in \Lambda, \ q\in K,}
\end{equation}
and observe that
$$
F(x,p)=F_0(x,G(x)^*p), \ \ \ \forall p\in \cald(G(x)^*),
$$
so \eqref{eq4:HJBcontrolell} can be formally rewritten as
\begin{equation}
\label{eq4:HJBcontrolellbis}
\displaystyle{ \lambda v(x) -\frac{1}{2}\;
\mbox{\rm Tr}\;[Q(x)D^2v(x)] -\< Ax+b(x),Dv(x)\>- F_0(x,D^Gv(x))=0,
\quad x \in H}.
\end{equation}
Denote by $X_0(\cdot,x)$ the unique mild solution of the uncontrolled state equation
\begin{equation}
\begin{cases}
dX(s)=\left[AX(s) + b(X(s))\right]ds
+ \sigma(X(s))dW(s),\\
\label{eq4:SE1ellnew}
X(0)=x,
\end{cases}
\end{equation}
and consider the associated transition semigroup $(P_s)_{s\geq 0}$ defined by $P_s[\phi](x)\coloneqq \E[\phi(X_0(s,x))]$.
Then, with these specifications of $(P_s)_{s\geq 0}$ and $F_0$,   one can apply to HJB \eqref{eq4:HJBcontrolellbis} the results of the latter section to establish existence, uniqueness and regularity of the mild solution if all the assumptions are fulfilled.

\begin{Remark}\label{rem:OFM}
The $G$-regularity of the mild solution to \eqref{eq4:HJBcontrolellbis} is particularly meaningful from the point of view of the control problem,  as it allows to define, in classical sense (unlike viscosity solution do), an optimal feedback map for the problem. In our case, assuming that the  infimum in \eqref{eq4:F0controlell} is obtained by a unique minimum point, it  reads as
\begin{equation}\label{OFM}
a^*(x,D^Gv(x))\coloneqq \textsl{arg\,inf}_{a\in \Lambda}\,\, F_{0,CV} (x,D^Gv(x);a).
\end{equation}
This fact represents an important starting point towards the solution of the problem through a verification theorem. To prove such a result one needs to apply
stochastic calculus and It\^o's formula to exploit the HJB equation and, in this regard, the concept of mild solution does not provide sufficient regularity. So, this issue still needs some work.  A possible strategy to tackle the problem is to prove that the mild solution enjoys the property of being a \emph{strong solution}, i.e., roughly speaking, to be the limit, in some suitable sense, of very regular solutions of approximating equations.  Then, one can argue by approximation to prove the verification theorem (see, e.g., \cite{Gozzi95}). {However, as we will show  in a forthcoming companion paper, this passage is not needed (at least) in the case of optimal control of Ornstein-Uhlenbeck process. Already the notion of mild solution suffices to prove a verification theorem}.
\end{Remark}

\subsection{Neumann boundary control of  stochastic heat  equation with additive noise}
We consider  the optimal control of a nonlinear stochastic heat equation in a given space region ${\mathcal{O}}\subseteq \R^N$ when the control can be exercised only at the boundary of  ${\mathcal{O}}$ or in a subset of ${\mathcal{O}}$.
Precisely we consider the cases when the control at the boundary enters through a Neumann-type boundary condition, corresponding to control the heat flow at the boundary.


\subsubsection{Informal setting of the problem}
\label{SSSE2:HEATBOUNDARYSETTINGD}

Let ${\mathcal{O}}$ be an open, connected, bounded subset of  ${\mathbb{R} }^N$ with regular (in the sense of \cite[Sec.\,6]{Lasiecka80}) boundary $\partial {\mathcal{O}}$. We consider the controlled dynamical system driven by the following SPDE on the time interval $[0,+\infty)$:
\begin{equation}
\label{eq2:state-boundaryPDENeu}
\left\{
\begin{array}{ll}
\displaystyle{\frac{\partial}{\partial s} y(s,\xi) =
\Delta y(s,\xi)+ \sigma \dot{W}(s,\xi)},
& (s,\xi) \in [0,+\infty) \times {\mathcal{O}},
\\\\
y(0,\xi ) = x (\xi), & \xi\in {\overline{\mathcal{O}}}, \\\\
\displaystyle{ \frac{\partial y(s,\xi)}{\partial n}= {\gamma_0} (s,\xi),}
& (s,\xi)\in  [0,+\infty)\times \partial {\mathcal{O}}, \end{array}\right.
\end{equation}
where:
\begin{itemize}
   \item $y: [0,+\infty)\times \mathcal{O} \times \Omega\to \R$ is a stochastic process describing  the evolution of the temperature distribution and is the {\em state variable} of the system;
   \item ${{\gamma_0}}:[0,+\infty)\times \partial \mathcal{O}\times \Omega\to \R$ is a stochastic process representing the heat flow at the boundary; it is the {\em control variable} of the system and acts at the boundary of it: this is the reason of the terminology ``boundary control";
\item $n$ is the outward unit normal vector at the boundary $\partial \calo$;
  \item $x\in L^2(\mathcal{O}) $ is the initial state (initial temperature distribution) in the region $\mathcal{O}$;
  \item $W$ is a cylindrical Wiener process in $L^2(\calo)$;
  \item $\sigma\in \call(L^2(\calo))$.

\end{itemize}

Assume that this equation is well posed (in some suitable sense,
see below for the precise setting) for every given ${\gamma_0(\cdot,\cdot)}$ in a suitable set of admissible control processes ${\calu_0}$
and denote its unique solution  by ${y^{x,\gamma_0(\cdot,\cdot)}}$ to underline the dependence of the state $y$ on the control ${\gamma_0(\cdot,\cdot)}$ and on the initial datum $x$.
The controller aims at minimizing over the set {$\calu_0$} the functional
\begin{equation}
\label{eq1:CFLapDir}
I(x;{\gamma_0(\cdot,\cdot)}) ={\mathbb{E}}\Bigg[ \int_{0}^{+\infty}e^{-\lambda s}\left(\int_{{\mathcal{O}}}\beta_1(y^{x,{\gamma_0(\cdot,\cdot)}}(s,\xi )) d\xi + \int_{{\partial\mathcal{O}}} \! \beta_2 ({\gamma_0} (s,\xi)) d\xi \right)\ud s   \Bigg],
\end{equation}
where $\beta_1, \beta_2:\R\to\R$ are given measurable functions and $\lambda>0$ is a discount factor.

\subsubsection{Infinite dimensional formulation}

We now rewrite the state equation  (\ref{eq2:state-boundaryPDENeu})) and the functional \eqref{eq1:CFLapDir} in an infinite dimensional setting in the space  $H\coloneqq L^2({\mathcal{O}})$. For more details, we refer to \cite[pp.431 ff.]{BDDM07}, \cite[Sec.\,3.3]{LasieckaTriggiani00}, and \cite{Lasiecka80} in a deterministic framework); to \cite{Gozzi02} and \cite[Appendix C]{FGSbook} in a stochastic framework.

Consider the realization of the Laplace operator with vanishing Neumann boundary conditions:\footnote{To be precise, $\cald(A_N)$ is the closure in $H^2(\calo)$ of the set of functions $\phi\in C^2(\overline\calo)$ having vanishing normal derivative at the boundary $\partial\calo$.}
\begin{equation}
\label{eq:def-AN}
\left \{
\begin{array}{l}
\cald(A_N) := \left \{ \phi\in H^2 ({\mathcal{O}})  : \  \frac{\partial \phi}{\partial n} = 0 \; \text{\rm\;  on } \partial{\mathcal{O}}  \right \}, \\[7pt]
A_N\phi:= \Delta\phi, \ \ \forall \phi\in \cald(A_N).
\end{array}
\right.
\end{equation}
It is well-known (see, e.g.,  \cite[Ch.\,3]{Lunardi95}) that $A_N$ is  generates a strongly continuous analytic  semigroup $(e^{t A_N})_{t\geq 0}$ in $H$. Moreover, $A_N$ is a self-adjoint and dissipative operator. In particular $(0,+\infty)\subset \varrho(A_N)$, where $ \varrho(A_N)$ denotes the resolvent set of $A_N$. So,  if $\delta > 0$, then  $(\delta I-A_N)$ is invertible and $(\delta I-A_N)^{-1}\in \mathcal{L}(H)$. 
{Moreover (see, e.g., \cite[App.\,B]{Lasiecka80})
the operator $(\delta I-A_N)^{-1}$ is compact and, consequently, there exists an orthonormal complete sequence $\{e_k\}_{k\in \N}$ such that the operator
$A_N$ is diagonal with respect to it:
\begin{equation}\label{diag}
A_Ne_k=-\mu_k e_k, \ \ k\in\N,
\end{equation}
for a suitable sequence of eigenvalues $\{\mu_k\}_{k\in \N}$ with $\mu_k\ge 0$ (the sign being due to dissipativity of $A_N$). We assume that such sequence is increasingly ordered.
\cite[App.\,B]{Lasiecka80} provides also a growth rate for the sequence of eigenvalues; indeed
\begin{equation}
\label{kj}
\mu_k \sim k^{2/d}.
\end{equation}
Note that \eqref{kj} in particular yields
\begin{equation}\label{kj2}
\exists k_0\in\N: \ \mu_k>0 \ \forall k\geq k_0.
\end{equation}
We have (see again, e.g., \cite[App.\,B]{Lasiecka80})} 
the isomorphic identification (see, e.g.,   \cite[App.\,B]{Lasiecka80})
\begin{equation}\label{pppd}
\cald((\delta I-A_N)^\alpha)=H^{2\alpha}({\mathcal{O}}),\quad \forall \alpha\in \left(0,\frac{3}{4}\right), \  \forall \delta>0.
\end{equation}
{where $H^{s}(\calo)$ denotes the Sobolev space of exponent $s\in \R$.}
Next,
consider the following problem with Neumann boundary condition:
\begin{equation}
\label{eq:Neumann-problem}
\left \{
\begin{array}{ll}
\Delta w(\xi)=\delta w(\xi), & \xi\in {\mathcal{O}} \\[8pt]
\frac{\partial}{\partial n}w(\xi) = \eta(\xi), & \xi\in \partial{\mathcal{O}}.
\end{array}
\right .
\end{equation}
 Given any $\delta> 0$ and $\eta\in L^2(\partial {\mathcal{O}})$, there exists a unique solution $N_\delta\eta\in H^{3/2}({\mathcal{O}})$ to (\ref{eq:Neumann-problem}). Moreover, the operator (\emph{Neumann map})
\begin{equation}
\label{eqapp:defNeumannmap}
N_\delta :  L^2(\partial {\mathcal{O}}) \to H^{3/2}(\calo)
\end{equation}
is continuous (see \cite[Th.\,7.4]{LionsMagenes-1-EN}).
So, in view of \eqref{pppd}, the map
\begin{equation}
\label{eqapp:defNeumannmapdelta}
N_\delta :  L^2(\partial {\mathcal{O}}) \to  \cald((\delta I -A_N)^{\frac{3}{4} - \varepsilon}), \ \ \ \varepsilon\in(0,3/4)
\end{equation}
is continuous.
Consider  the following problem:
\begin{equation}
\label{eq:parabolic-Neumann}
\left \{
\begin{array}{ll}
\frac{\partial}{\partial s} y(s,\xi) = \Delta  y(s,\xi)  + f(\xi), & (s,\xi)\in [0,+\infty) \times {\mathcal{O}}, \\[8pt]
\frac{\partial}{\partial n} y(s,\xi) = {\gamma_0(s,\xi)}, & (s,\xi)\in [0,+\infty)\times \partial{\mathcal{O}}, \\[8pt]
y(0,\xi) = x(\xi), & \xi\in \calo,
\end{array}
\right .
\end{equation}
where {$x,f\in C(\overline\calo;\R)$, $\gamma_0 \in C([0,+\infty)\times \partial\calo;\R)$}.
If $y\in C^{1,2}([0,+\infty)\times\overline{\mathcal{O}})$  solves (\ref{eq:parabolic-Neumann})  in a classical sense
then  $X(s):=y(s,\cdot)$ solves an evolution equation in $H$.
We sketch the argument (see also, e.g.,  \cite[Sec.\,13.2]{DaPratoZabczyk96}).


{As $y$ is smooth, by classical theory $N_\delta \gamma_0$ is smooth too and
$\frac{\partial }{\partial s} N_\delta \gamma_0=N_\delta  \frac{\partial}{\partial s} \gamma_0$ for all $s\geq 0, \xi\in \calo$. Setting $\gamma(s)(\xi):=\gamma_0(s,\xi)$ and considering $\gamma$ as an element of $L^2_{loc}([0,+\infty);L^2(\partial \calo,\R)$, we then have that the map
$s\mapsto z(s):= X(s) - N_\delta\gamma(s)
$
belongs to $C^1([0,+\infty),H)\cap C([0,+\infty), \cald(A_N))$
 and}
\begin{eqnarray*}
z'(s) &=&X'(s) - \frac{\ud }{\ud s}N_\delta \gamma(s)
=\Delta X(s) + f - N_\delta  \gamma'(s)\\
&=& \Delta (X(s)- N_\delta\gamma(s)) + f +\Delta N_\delta\gamma(s)- N_\delta \gamma'(s)
\\&=& \Delta (X(s)- N_\delta\gamma(s)) + f +\delta N_\delta\gamma(s)- N_\delta \gamma'(s)\\
&=& A_N z(s)+f  +\delta N_\delta\gamma(s) - N_\delta \gamma'(s).
\end{eqnarray*}
In particular,  $z$ is a strict solution (see \cite[Def.\,4.1.1]{Lunardi95}) to the evolution equation in $H$
\begin{equation}
\label{eq:per-z}
\left \{
\begin{array}{l}
z'(s) = A_N z(s)+f  +\delta N_\delta  \gamma(s) - N_\delta \frac{\ud}{\ud s}\gamma(s), \ \ \ \forall s\geq 0,
\vspace{5pt}\\
z(0) = x- N_\delta \gamma(0).
\end{array}
\right .
\end{equation}
It can be written in mild form as
\begin{equation}\nonumber
z(s) = e^{s A_N} \left(x- N_\delta\gamma(0)\right) + \int_0^s e^{(s-r) A_N} \left [  f +\delta N_\delta \gamma(r)- N_\delta  \gamma'(r) \right ] \ud r.
\end{equation}
Therefore
$$
X(s) = z(s) + N_\delta\gamma(s)
= e^{s A_N} \left(x- N_\delta\gamma(0)\right) + \int_0^s e^{(s-r) A_N} \left [  f +\delta N_\delta \gamma(r)- N_\delta  \gamma'(r) \right ] \ud r + N_\delta \gamma(s), \ \ \ \ \forall s\geq 0.
$$
Now, an application of  \cite[Lemma\,13.2.2]{DaPratoZabczyk96} {(integration by parts)} yields
\[X(s)=e^{s A_N} x - A_N \int_0^{s} e^{(s-r)A_N} N_\delta \gamma(r)\ud r + \int_0^s e^{(s-r)A_N} (f+\delta N_\delta \gamma(r))\ud r, \ \ \ \ \forall  s\geq 0,
\]
i.e.
\begin{equation}
\label{eq:sol-parabolic-Neumann-mild}
X(s) = e^{s A_N} x - (A_N-\delta I) \int_0^{s} e^{(s-r) A_N} N_\delta \gamma(r)\ud r + \int_0^s e^{(s-r)A_N} f\ud r, \ \ \ \forall s\geq 0.
\end{equation}
%
It is suitable, for our purpose, to rewrite \eqref{eq:sol-parabolic-Neumann-mild} further, by exploiting as much  as possible the regularity of the map $N_\delta$. Indeed, in view of the continuity
of the map in \eqref{eqapp:defNeumannmapdelta}, we have, for every $\varepsilon>0$,
\begin{equation}
\label{eq:appA-grazieaBoachner-NEU}
- (A_N-\delta I) \int_0^{s} e^{(s-r) A_N} N_\delta\gamma(r)\ud r
=\int_0^{s} (\delta I-A_N)^{\frac{1}{4}+\varepsilon} e^{(s-r)A_N} L_N^{\delta,\varepsilon}\gamma(r)\ud r,
\end{equation}
where
$
L_N^{\delta,\varepsilon}:= (\delta I-A_N)^{\frac{3}{4}-\varepsilon} N_\delta \in \mathcal{L}(L^2(\partial\calo), H)$.
Therefore, we can rewrite (\ref{eq:sol-parabolic-Neumann-mild}) as
\begin{equation}
\label{eq:sol-parabolic-Neumann-mild1}
X(s) = e^{s A_N} x +\int_0^{s} (\delta I-A_N)^{\frac{1}{4}+\varepsilon} e^{(s-r)A_N} L_N^{\delta,\varepsilon}\gamma(r)\ud r + \int_0^s e^{(s-r)A_N} f(r)\ud r.
\end{equation}
Therefore, setting $G_N^{\delta,\varepsilon}\coloneqq  (\delta I-A_N)^{\frac{1}{4}+\varepsilon}$, it is meaningful to rephrase  \eqref{eq2:state-boundaryPDENeu} in $H$ as
\begin{equation}
\label{eq:strongformAN}
\left \{
\begin{array}{l}
\ud X(s) = \left [  A_N X(s) + G_N^{\delta,\varepsilon} L_N^{\delta,\varepsilon}\gamma(s) \right ] \ud s
+ \sigma \ud W(s), \\[8pt]
X(0) = x.
\end{array}
\right.
\end{equation}
 We are now in the framework of \eqref{eq4:SE1ellnew}, with $K=H$,
 {$U=L^2(\partial \mathcal{O})$}, $A=A_N$, $b\equiv 0$, $G(\cdot)\equiv G_N^{\delta,\varepsilon}$, $L(\cdot,a)\equiv L_N^{\delta,\varepsilon}$, $a=\gamma$, and, with a slight abuse of notation,  $\sigma(\cdot)\equiv \sigma$.
Let us consider, as set of admissible controls,
$$
\calu\coloneqq \left\{\gamma: [0,+\infty)\times \Omega \to \Lambda: \hbox{ $\gamma(\cdot)$ is
$(\mathscr{F}_s)$-progressively measurable}
\right\}.
$$
where $\Lambda$ is a bounded closed subset of $L^2(\partial \calo)$.
Note that $L_N^{\delta,\varepsilon}(\Lambda)$ is bounded in $H$
and that, since the semigroup $\{e^{tA_N}\}_{t \ge 0}$ is strongly
continuous and analytic, the operator
$e^{sA_N} G_N^{\delta,\varepsilon}$ can be extended to  $\overline{e^{sA_N} G_N^{\delta,\varepsilon}}=G_N^{\delta,\varepsilon}e^{sA_N}\in \call(H)$ for every $s>0$ and
\begin{equation}\label{Assq}
\bigg|\overline{e^{sA_N} G_N^{\delta,\varepsilon}}\bigg|_{\call(H)}\leq
Cs^{\frac14 + \varepsilon}, \qquad \forall s > 0
\end{equation}
(see e.g. \cite{Pazy83}, Theorem 6.13-(c)).

Now, assume the following.
\begin{itemize}
\item[(A1)] {$\sigma$} satisfies Assumption \ref{hp4:ABQforOU}(iii).
 \item[(A2)] Letting  $Q_s$ be the operator defined in Assumption \ref{hp4:ABQforOU}(ii), we have
 \begin{equation}\label{asss}
\overline{e^{sA_N}G_N^{\delta,\varepsilon}}(H)\subseteq Q_{s}^{1/2}(H), \qquad \forall s > 0.
\end{equation}
\item[(A3)] The map $t\mapsto |\Gamma_{G_N^{\delta,\varepsilon}}(t)|_{\call(K,H)}$ belongs to $\cali$.
\end{itemize}
We will discuss in the next example conditions guaranteeing (A1)--(A3).
Under such assumptions, Assumption \ref{hp4:b1VTell} is verified; it follows that,   for every $\gamma\in \calu$, there exists a unique mild solution  $X(\cdot,x,\gamma(\cdot))\in \calh_\calp^{p,loc}(H)$ to \eqref{eq:strongformAN} for every $p\geq 2$.  Defining
\[
l\colon H\times \Lambda \to \mathbb{R},  \ \ l(x,{\eta}) \coloneqq \int_{{\mathcal{O}}}\beta_1 (x(\xi)) \ud\xi+\int_{{\partial\mathcal{O}}}\beta_2 ({\eta}(\xi )) \ud\xi,
\]
the functional  (\ref{eq1:CFLapDir}) can be rewritten in the Hilbert space setting as
\begin{equation}
\label{eq1:CFLapDir-restatedbis}
J(x;{\gamma}(\cdot)) ={\mathbb{E}}\Bigg[ \int_{0}^{+\infty} e^{-\lambda s}
l (X(s;x,{\gamma}(\cdot)), \gamma(s)) ds \Bigg].
\end{equation}
Setting $Q=\sigma\sigma^*$, the HJB equation associated to the minimization of \eqref{eq1:CFLapDir-restatedbis} is
\begin{equation}
\label{eq2:HJDirichletNeumann}
\displaystyle{\lambda v(x) - \frac{1}{2}\;
\mbox{\rm Tr}\;[QD^2v(x)]
- \left\langle A_Nx,Dv(x) \right\rangle-  \inf_{{\eta} \in \Lambda} \left\{\left\langle  L_N^{\delta, \varepsilon} {\eta},D^{G_N^{\delta,\varepsilon}} v(x) \right\rangle +l(x,{\eta})  \right\}=0.}
\end{equation}
Hence, if $\beta_1,\beta_2$ satisfy proper continuity and growth assumptions guaranteeing that
$$F_0(x,y,z)\coloneqq   \inf_{{\eta} \in \Lambda} \left\{\left\langle  {L_N^{\delta, \varepsilon}\eta},z \right\rangle +l(x,\eta)  \right\}$$  satisfies Assumptions \ref{hp4:F0ellbis}, we can apply Corollary \ref{corcor} to this problem.

\begin{Example}\label{ex2}
(cf.   \cite[Ex.\,6.3]{Gozzi95} and \cite[Ex.\,13.1.2]{DaPratoZabczyk02}).
Here we discuss the validity of (A1)--(A3) above in a specific example. Within the framework above,
let $\calo= (0,\pi)\subset \R$.
The Laplace operator $A_N$ has a diagonal representation on $H=L^2(0,\pi)$.
The orthonormal basis of eigenfunctions is
$$
e_{n}(\xi ) =
\cos (n\xi),\quad \xi\in(0,\pi), \ n\in\N,
$$
and
\begin{equation*}
\label{autlapN}
A_N e_n =-\alpha_ne_n, \ \ n\in \N,
\end{equation*}
where $\alpha_n=n^2$.
In particular, looking at \eqref{eq4:Qtdiagnuclear}, we see that every ${{\sigma}}\in\call(H)$ satisfies (A1). Consider the case ${{\sigma}}=I$. Take $\delta>0$ and set $\beta\coloneqq \frac{1}{4}+\varepsilon$. 	{{For every $n\in\N$, we have $G_N^{\delta,\varepsilon}e_n=(\delta I-A_N)^\beta e_n= g_ne_n$, where
$g_n\coloneqq \left(\delta+n^2\right)^\beta$.}}
Moreover,
\begin{equation}\label{eq:Gammanormest}
\sup_{n\ge 1}
\frac{2n^2(\delta+n^2)^{2\beta}}{e^{2t n^2}-1}\le {\sup_{n\ge 1}}
\frac{2(1+\delta)^{2\beta}(n^2)^{1+2\beta}}{e^{2t n^2}-1}\le \ \frac{C_0}{t^{1+2\beta}}
\end{equation}
where $C_0\coloneqq {{2(1+\delta)^{2\beta} \sup_{s>0} \frac{s^{1+2\beta}}{e^{2s}-1}}}<+\infty$.
Then, recalling what said in Example \ref{ex1}, in particular in \myref{eq4:diagNC}-\eqref{eq4:diagGammaGint}, we have
\[
|\Gamma_G(t)|^2_{\call(H)}\le
\frac{C_0}{t^{1+2\beta}},
\]
so \myref{eq4:diagGammaGint} holds with
$\gamma_G(t)=\frac{C_0^{1/2}}{t^{\frac{1}{2 }+\beta}}$.
This shows that, {{if $\varepsilon\in(0,1/4)$, then}}  (A2)--(A3) hold.

\end{Example}

\begin{Remark}
\begin{enumerate}[(i)]
\item The example given can be extended to include the case of optimal boundary control of the reaction-diffusion SPDE
$$
{\frac{\partial}{\partial s} y(s,\xi) =
\Delta y(s,\xi)+ f(y(s,\xi))+\sigma \dot{W}(s,\xi)}.
$$
where $f$ satisfies suitable dissipativity conditions.
This problem is studied in \cite{Cerrai01-40} in the case of distributed control. Our techniques allows to treat the same case with control of Neumann type at the boundary.

\item
 One can deal with the multidimensional extension of Example \ref{ex2}, when $\calo =(0,\pi)^2$ (see \cite[Rem.\,6.4]{Lasiecka80}).
{ In this case $\alpha_n \sim n$, hence $\sum_{n=1}^{+\infty}\frac{1}{\alpha_n}=+\infty$; so, in order
to get (A1) satisfied, the diffusion coefficient ${\sigma}$ cannot be the identity. Instead,  assume
that {$Q:=\sigma \sigma^*$} is diagonal with eigenvalues
$q_n \sim n^{-\theta +1}$, $\theta>1$.
Then
\begin{align*}
&\sup_{n\in\mathbb{N}}{\frac{2n(\delta+n)^{2\beta}}{(e^{2tn}-1)q_n}}\sim \sup_{n\in\mathbb{N}}{\frac{2n^{\theta}(\delta+n)^{2\beta}}{e^{2tn}-1}}
\\&\le  \sup_{n\in\mathbb{N}}  \frac{2(1+\delta)^{2\beta}n^{\theta +2\beta}}{e^{2t n}-1} =    \sup_{n\in\mathbb{N}}  \frac{2(1+\delta)^{2\beta}\left(tn\right)^{\theta  +2\beta}}{\left(e^{2t n}-1\right)t^{\theta  +2\beta}} \leq \frac{C_0}{t^{\theta  +2\beta}},
\end{align*}
where $C_0\coloneqq 2(1+\delta)^{2\beta} \sup_{s>0} \frac{s^{\theta  +2\beta}}{e^{2s}-1}$. The latter is finite if and only if ${\theta  +2\beta}>1$, which is the case, as we are taking $\theta>1$ and $\beta\in(1/4,1/2)$. So, (A1) is satisfied.
Then, to have (A2) and (A3) satisfied too, we must take $\theta>1$ such that $\theta  +2\beta<2$, i.e. $\theta<2(1-\beta)$. This is clearly  possible as $\beta=\frac{1}{4}+\varepsilon\in(1/4,1/2)$.
}
\end{enumerate}

\end{Remark}

\addcontentsline{toc}{section}{References}
\bibliographystyle{plain}
\bibliography{Arxiv-rex.bbl}

\begin{thebibliography}{10}

\bibitem{BarbuDaPrato81}
V.~Barbu and G.~Da~Prato.
\newblock A direct method for studying the dynamic programming equation for
  controlled diffusion processes in {H}ilbert spaces.
\newblock {\em Numer. Funct. Anal. Optim.}, 4(1):23--43, 1981/82.

\bibitem{BarbuDaPrato83book}
V.~Barbu and G.~Da~Prato.
\newblock {\em Hamilton-{J}acobi equations in {H}ilbert spaces}, volume~86 of
  {\em Research Notes in Mathematics}.
\newblock Pitman, Boston, 1983.

\bibitem{BarbuDaPrato83siam}
V.~Barbu and G.~Da~Prato.
\newblock Solution of the {B}ellman equation associated with an
  infinite-dimensional stochastic control problem and synthesis of optimal
  control.
\newblock {\em SIAM J. Control Optim.}, 21(4):531--550, 1983.

\bibitem{BDDM07}
A.~Bensoussan, G.~Da~Prato, M.~C. Delfour, and S.~K. Mitter.
\newblock {\em Representation and control of infinite dimensional systems}.
\newblock Systems and Control: Foundations and Applications. Birkh\"auser,
  Boston, second edition, 2007.

\bibitem{Bismut81}
J.-M. Bismut.
\newblock {Martingales, the Malliavin calculus and hypoellipticity under
  general H{\"o}rmander's conditions}.
\newblock {\em Probab. Theory Related Fields}, 56(4):469--505, 1981.

\bibitem{CannarsaDaPrato91}
P.~Cannarsa and G.~Da~Prato.
\newblock Second-order {H}amilton-{J}acobi equations in infinite dimensions.
\newblock {\em SIAM J. Control Optim.}, 29(2):474--492, 1991.

\bibitem{CannarsaDaPrato92}
P.~Cannarsa and G.~Da~Prato.
\newblock Direct solution of a second order {H}amilton-{J}acobi equation in
  {H}ilbert spaces.
\newblock In {\em Stochastic partial differential equations and applications},
  volume 268 of {\em Pitman Research Notes in Mathematics Series}, pages
  72--85. Longman, Harlow, 1992.

\bibitem{Cerrai94}
S.~Cerrai.
\newblock A {H}ille-{Y}osida theorem for weakly continuous semigroups.
\newblock {\em Semigroup Forum}, 49(3):349--367, 1994.

\bibitem{Cerrai95}
S.~Cerrai.
\newblock Weakly continuous semigroups in the space of functions with
  polynomial growth.
\newblock {\em Dynam. Syst. Appl}, 4:351--372, 1995.

\bibitem{Cerrai99}
S.~Cerrai.
\newblock {Differentiability of Markov semigroups for stochastic
  reaction-diffusion equations and applications to control}.
\newblock {\em Stoch. Proc. appl.}, 83(1):15--37, 1999.

\bibitem{Cerrai01}
S.~Cerrai.
\newblock Optimal control problems for stochastic reaction-diffusion systems
  with non-{L}ipschitz coefficients.
\newblock {\em SIAM J. Control Optim.}, 39(6):1779--1816, 2001.

\bibitem{Cerrai01book}
S.~Cerrai.
\newblock {\em {Second order PDE's in finite and infinite dimension: a
  probabilistic approach}}, volume 1762 of {\em Lecture Notes in Mathematics}.
\newblock Springer, Berlin, 2001.

\bibitem{Cerrai01-40}
S.~Cerrai.
\newblock Stationary {H}amilton-{J}acobi equations in {H}ilbert spaces and
  applications to a stochastic optimal control problem.
\newblock {\em SIAM J. Control Optim.}, 40(3):824--852, 2001.

\bibitem{Cosso15}
A.~Cosso, S.~Federico, F.~Gozzi, M.~Rosestolato, and N.~Touzi.
\newblock Path-dependent equations and viscosity solutions in infinite
  dimension.
\newblock Preprint arXiv:1502.05648, 2015.

\bibitem{DaPrato85}
G.~Da~Prato.
\newblock Some results on {B}ellman equation in {H}ilbert spaces.
\newblock {\em SIAM J. Control Optim.}, 23(1):61--71, 1985.

\bibitem{DaPrato14}
G.~Da~Prato.
\newblock {\em {Introduction to stochastic analysis and Malliavin calculus}}.
\newblock Edizioni della Normale, Pisa, 2014.

\bibitem{DaPratoDebussche98}
G.~Da~Prato and A.~Debussche.
\newblock Differentiability of the transition semigroup of the stochastic
  {B}urgers equation, and application to the corresponding {H}amilton-{J}acobi
  equation.
\newblock {\em Atti Accad. Naz. Lincei Cl. Sci. Fis. Mat. Natur. Rend. Lincei
  (9) Mat. Appl.}, 9(4):267--277, 1998.

\bibitem{DaPratoDebussche00}
G.~Da~Prato and A.~Debussche.
\newblock Dynamic programming for the stochastic {N}avier-{S}tokes equations.
\newblock {\em M2AN Math. Model. Numer. Anal.}, 34(2):459--475, 2000.

\bibitem{DaPratoLunardi95}
G.~Da~Prato and A.~Lunardi.
\newblock On the {O}rnstein-{U}hlenbeck operator in spaces of continuous
  functions.
\newblock {\em J. Funct. Anal.}, 131(1):94--114, 1995.

\bibitem{DaPratoZabczyk96}
G.~Da~Prato and J.~Zabczyk.
\newblock {\em Ergodicity for infinite-dimensional systems}, volume 229 of {\em
  London Mathematical Society Lecture Note Series}.
\newblock Cambridge University Press, Cambridge, 1996.

\bibitem{DaPratoZabczyk02}
G.~Da~Prato and J.~Zabczyk.
\newblock {\em Second order partial differential equations in {H}ilbert
  spaces}, volume 293 of {\em London Mathematical Society Lecture Note Series}.
\newblock Cambridge University Press, Cambridge, 2002.

\bibitem{DaPratoZabczyk14}
G.~Da~Prato and J.~Zabczyk.
\newblock {\em Stochastic equations in infinite dimensions}, volume 152.
\newblock Cambridge university press, 2014.

\bibitem{DebusscheFuhrmanTessitore07}
A.~Debussche, M.~Fuhrman, and G.~Tessitore.
\newblock Optimal control of a stochastic heat equation with boundary-noise and
  boundary-control.
\newblock {\em ESAIM Control Optim. Calc. Var.}, 13(1):178--205, 2007.

\bibitem{DiestelUhl77}
J.~Diestel and J.~J. Uhl.
\newblock {\em Vector Measures}, volume~15 of {\em Mathematical Surveys and
  Monographs}.
\newblock American Mathematical Society, Providence, 1977.

\bibitem{DunfordSchwartz58-I}
N.~Dunford and J.~T. Schwartz.
\newblock {\em Linear operators. {P}art {I}}.
\newblock Interscience, New York, 1958.

\bibitem{ElworthyLi94}
K.~D. Elworthy and X.-M. Li.
\newblock Formulae for the derivatives of heat semigroups.
\newblock {\em J. Funct. Anal.}, 125(1):252--286, 1994.

\bibitem{EngelNagel99}
K.~J. Engel and R.~Nagel.
\newblock {\em One-parameter semigroups for linear evolution equations}, volume
  194 of {\em Graduate Texts in Mathematics}.
\newblock Springer, Berlin, 1999.

\bibitem{FGSbook}
G.~Fabbri, F.~Gozzi, and A.~\Swiech$\, $.
\newblock {\em Stochastic Optimal Control in Infinite Dimensions: Dynamic
  Programming and HJB Equations}.
\newblock Stochastic Modeling and Applied Probability. Springer, New York,
  first edition, 2017.

\bibitem{Federico11}
S.~Federico.
\newblock A stochastic control problem with delay arising in a pension fund
  model.
\newblock {\em Finance Stoch.}, 15(3):421--459, 2011.

\bibitem{FedericoGoldysGozzi10SICON}
S.~Federico, B.~Goldys, and F.~Gozzi.
\newblock H{JB} equations for the optimal control of differential equations
  with delays and state constraints, {I}: regularity of viscosity solutions.
\newblock {\em SIAM J. Control Optim.}, 48(8):4910--4937, 2010.

\bibitem{FedericoGoldysGozzi11SICON}
S.~Federico, B.~Goldys, and F.~Gozzi.
\newblock {HJB} equations for the optimal control of differential equations
  with delays and state constraints, {II}: verification and optimal feedbacks.
\newblock {\em SIAM J. Control Optim.}, 49(6):2378--2414, 2011.

\bibitem{FedericoTacconi14}
S.~Federico and E.~Tacconi.
\newblock Dynamic programming for optimal control problems with delays in the
  control variable.
\newblock {\em SIAM J. Control Optim.}, 52(2):1203--1236, 2014.

\bibitem{FuhrmanTessitore02-sto}
M.~Fuhrman and G.~Tessitore.
\newblock The {B}ismut-{E}lworthy formula for backward {SDE}s and applications
  to nonlinear {K}olmogorov equations and control in infinite dimensional
  spaces.
\newblock {\em Stoch. Stoch. Rep.}, 74(1-2):429--464, 2002.

\bibitem{FuhrmanTessitore02-ann}
M.~Fuhrman and G.~Tessitore.
\newblock Nonlinear {K}olmogorov equations in infinite dimensional spaces: the
  backward stochastic differential equations approach and applications to
  optimal control.
\newblock {\em Ann. Probab.}, 30(3):1397--1465, 2002.

\bibitem{FuTe-ell}
M.~Fuhrman and G.~Tessitore.
\newblock Infinite horizon backward stochastic differential equations and
  elliptic equations in {H}ilbert spaces.
\newblock {\em Ann. Probab.}, 32(1B):607--660, 2004.

\bibitem{FuTe-gen-gra}
M.~Fuhrman and G.~Tessitore.
\newblock Generalized directional gradients, backward stochastic differential
  equations and mild solutions of semilinear parabolic equations.
\newblock {\em Appl. Math. Optim.}, 51(3):279--332, 2005.

\bibitem{GawareckiMandrekar10}
L.~Gawarecki and V.~Mandrekar.
\newblock {\em Stochastic differential equations in infinite dimensions with
  applications to stochastic partial differential equations}.
\newblock Probability and its Applications. Springer, Heidelberg, 2011.

\bibitem{GoldysMaslowski99}
B.~Goldys and B.~Maslowski.
\newblock Ergodic control of semilinear stochastic equations and the
  {H}amilton-{J}acobi equation.
\newblock {\em J. Math. Anal. Appl.}, 234(2):592--631, 1999.

\bibitem{Gozzi95}
F.~Gozzi.
\newblock Regularity of solutions of a second order {H}amilton-{J}acobi
  equation and application to a control problem.
\newblock {\em Comm. Partial Differential Equations}, 20(5-6):775--826, 1995.

\bibitem{Gozzi96}
F.~Gozzi.
\newblock Global regular solutions of second order {H}amilton-{J}acobi
  equations in {H}ilbert spaces with locally {L}ipschitz nonlinearities.
\newblock {\em J. Math. Anal. Appl.}, 198(2):399--443, 1996.

\bibitem{Gozzi98}
F.~Gozzi.
\newblock {\em Second Order {H}amilton-{J}acobi Equations in {H}ilbert Spaces
  and Stochastic Optimal Control}.
\newblock PhD thesis, Scuola Normale Superiore, Pisa, 1998.

\bibitem{Gozzi02}
F.~Gozzi.
\newblock Second order {H}amilton-{J}acobi equations in {H}ilbert spaces and
  stochastic optimal control.
\newblock In {\em Stochastic partial differential equations and applications},
  volume 227 of {\em Lecture Notes in Pure and Applied Mathematics}, pages
  255--285. Dekker, New York, 2002.

\bibitem{GozziMarinelli06}
F.~Gozzi and C.~Marinelli.
\newblock Stochastic optimal control of delay equations arising in advertising
  models.
\newblock In {\em Stochastic partial differential equations and applications
  {VII}}, volume 245 of {\em Lecture Notes in Pure and Applied Mathematics},
  pages 133--148. Chapman \& Hall, Raton, 2006.

\bibitem{GozziMasiero12}
F.~Gozzi and F.~Masiero.
\newblock Stochastic optimal control with delay in the control: solution
  through partial smoothing.
\newblock Preprint arXiv:1506.06013, 2015.

\bibitem{GozziRouy96}
F.~Gozzi and E.~Rouy.
\newblock Regular solutions of second-order stationary {H}amilton-{J}acobi
  equations.
\newblock {\em J. Differential Equations}, 130(1):201--234, 1996.

\bibitem{GozziRouySwiech00}
F.~Gozzi, E.~Rouy, and A.~\Swiech$\, $.
\newblock Second order {H}amilton-{J}acobi equations in {H}ilbert spaces and
  stochastic boundary control.
\newblock {\em SIAM J. Control Optim.}, 38(2):400--430, 2000.

\bibitem{GozziSritharanSwiech05}
F.~Gozzi, S.~S. Sritharan, and A.~\Swiech$\, $.
\newblock {Bellman equations associated to the optimal feedback control of
  stochastic Navier-Stokes equations}.
\newblock {\em Comm. Pure Appl. Math.}, 58(5):671--700, 2005.

\bibitem{GozziSwiech00}
F.~Gozzi and A.~\Swiech$\, $.
\newblock Hamilton-{J}acobi-{B}ellman equations for the optimal control of the
  {D}uncan-{M}ortensen-{Z}akai equation.
\newblock {\em J. Funct. Anal.}, 172(2):466--510, 2000.

\bibitem{Gross67}
L.~Gross.
\newblock Potential theory on {H}ilbert space.
\newblock {\em J. Funct. Anal.}, 1(2):123--181, 1967.

\bibitem{Havarneanu85}
T.~Hav{\^a}rneanu.
\newblock Existence for the dynamic programming equation of control diffusion
  processes in {H}ilbert space.
\newblock {\em Nonlinear Anal.}, 9(6):619--629, 1985.

\bibitem{ishii93}
H.~Ishii.
\newblock Viscosity solutions of nonlinear second-order partial differential
  equations in {H}ilbert spaces.
\newblock {\em Comm. Partial Differential Equations}, 18(3-4):601--650, 1993.

\bibitem{KelomeSwiech03}
D.~Kelome and A.~\Swiech$\, $.
\newblock Viscosity solutions of an infinite-dimensional
  {B}lack-{S}choles-{B}arenblatt equation.
\newblock {\em Appl. Math. Optim.}, 47(3):253--278, 2003.

\bibitem{Lasiecka80}
I.~Lasiecka.
\newblock Unified theory for abstract parabolic boundary problems - a semigroup
  approach.
\newblock {\em Appl. Math. Optim.}, 6(1):287--333, 1980.

\bibitem{LionsMagenes-1-EN}
J.-L. Lions and E.~Magenes.
\newblock {\em Non-homogeneous boundary value problems and applications},
  volume~1.
\newblock Springer, Berlin, 1972.

\bibitem{Lions88}
P.-L. Lions.
\newblock Viscosity solutions of fully nonlinear second-order equations and
  optimal stochastic control in infinite dimensions. {I}. {T}he case of bounded
  stochastic evolutions.
\newblock {\em Acta Math.}, 161(3-4):243--278, 1988.

\bibitem{Lions89sto}
P.-L. Lions.
\newblock Viscosity solutions of fully nonlinear second order equations and
  optimal stochastic control in infinite dimensions. {II}. {O}ptimal control of
  {Z}akai's equation.
\newblock In {\em Stochastic partial differential equations and applications
  {II}}, volume 1390 of {\em Lecture Notes in Mathematics}, pages 147--170.
  Springer, Berlin, 1989.

\bibitem{Lions89jfa}
P.-L. Lions.
\newblock Viscosity solutions of fully nonlinear second-order equations and
  optimal stochastic control in infinite dimensions. {III}. {U}niqueness of
  viscosity solutions for general second-order equations.
\newblock {\em J. Funct. Anal.}, 86(1):1--18, 1989.

\bibitem{Lunardi95}
A.~Lunardi.
\newblock {\em Analytic semigroups and optimal regularity in parabolic
  problems}, volume~16 of {\em Progress in Nonlinear Differential Equations and
  their Applications}.
\newblock Birkh\"auser, Basel, 1995.

\bibitem{MaRockner92}
Z.~M. Ma and M.~R{\"o}ckner.
\newblock {\em Introduction to the theory of (nonsymmetric) {D}irichlet forms}.
\newblock Universitext. Springer, Berlin, 1992.

\bibitem{Masiero03}
F.~Masiero.
\newblock {\em Semilinear {K}olmogorov equations and applications to stochastic
  optimal control}.
\newblock PhD thesis, Dipartimento di Matematica, Universit\'a di Milano, 2003.

\bibitem{Masiero05}
F.~Masiero.
\newblock Semilinear {K}olmogorov equations and applications to stochastic
  optimal control.
\newblock {\em Appl. Math. Optim.}, 51(2):201--250, 2005.

\bibitem{Masiero07AMO}
F.~Masiero.
\newblock Infinite horizon stochastic optimal control problems with degenerate
  noise and elliptic equations in {H}ilbert spaces.
\newblock {\em Appl. Math. Optim.}, 55(3):285--326, 2007.

\bibitem{Masiero07EJP}
F.~Masiero.
\newblock Regularizing properties for transition semigroups and semilinear
  parabolic equations in {B}anach spaces.
\newblock {\em Electron. J. Probab.}, 12:Paper no. 13, 387--419, 2007.

\bibitem{Masiero2010}
F.~Masiero.
\newblock A stochastic optimal control problem for the heat equation on the
  halfline with {D}irichlet boundary-noise and boundary-control.
\newblock {\em Appl. Math. Optim.}, 62(2):253--294, 2010.

\bibitem{Masiero14}
F.~Masiero.
\newblock {A Bismut Elworthy formula for quadratic BSDEs}.
\newblock Preprint arXiv:1404.2098, 2014.

\bibitem{Pazy83}
A.~Pazy.
\newblock {\em Semigroups of linear operators and applications to partial
  differential equations}, volume~44 of {\em Applied Mathematical Sciences}.
\newblock Springer, New York, 1983.

\bibitem{PeszatZabczyk95}
S.~Peszat and J.~Zabczyk.
\newblock Strong {F}eller property and irreducibility for diffusions on
  {H}ilbert spaces.
\newblock {\em Ann. Probab.}, 23(1):157--172, 1995.

\bibitem{Pettis38}
B.~J. Pettis.
\newblock On integration in vector spaces.
\newblock {\em Trans. Amer. Math. Soc.}, 44(2):277--304, 1938.

\bibitem{Priola99}
E.~Priola.
\newblock On a class of {M}arkov type semigroups in spaces of uniformly
  continuous and bounded functions.
\newblock {\em Studia Math.}, 136(3):271--295, 1999.

\bibitem{Swiech93}
A.~\Swiech$\, $.
\newblock {\em Viscosity solutions of fully nonlinear partial differential
  equations with ``unbounded'' terms in infinite dimensions}.
\newblock PhD thesis, University of California (Santa Barbara), July 1993.

\bibitem{Swiech94}
A.~\Swiech$\, $.
\newblock ``{U}nbounded'' second order partial differential equations in
  infinite-dimensional {H}ilbert spaces.
\newblock {\em Comm. Partial Differential Equations}, 19(11-12):1999--2036,
  1994.

\bibitem{Zabczyk85}
J.~Zabczyk.
\newblock {Linear stochastic systems in {H}ilbert spaces: spectral properties
  and limit behavior}.
\newblock Report of Institute of Mathematics, Polish Academy of Sciences 236,
  1981.
\newblock Also published in Banach Center Publications, \textbf{41} (1985),
  591--609.

\bibitem{Zeilder86-I}
E.~Zeidler.
\newblock {\em {Nonlinear functional analysis and its applications. Volume I:
  fixed-point theorems}}.
\newblock Springer, New York, 1986.

\end{thebibliography}

\end{document}